\theoremstyle{plain}
\newtheorem{theorem}{Theorem}[section]
\newtheorem{corollary}[theorem]{Corollary}
\newtheorem{lemma}[theorem]{Lemma}
\theoremstyle{remark}
\newtheorem{remark}[theorem]{Remark}
\theoremstyle{definition}
\newtheorem{definition}[theorem]{Definition}
\newcommand{\schememu}{\mu\!\!\!\mu_n}
\begin{document}

\title[Cup products in the \'etale cohomology of number fields]
{Cup products in the \'etale cohomology of number fields}

\author[\scriptsize  Bleher]{\scriptsize F. M. Bleher}
\address{F. M. Bleher, Dept. of Mathematics\\Univ. of Iowa\\Iowa City, IA 52242, USA}
\email{frauke-bleher@uiowa.edu}
\thanks{F. B. was partially supported by NSF FRG Grant No.\ DMS-1360621.}

\author[Chinburg]{T. Chinburg}
\address{T. Chinburg, Dept. of Mathematics\\ Univ. of Pennsylvania \\ Philadelphia, PA 19104, USA}
\email{ted@math.upenn.edu}
\thanks{T. C. was partially supported by  NSF FRG Grant No.\ DMS-1360767, NSF FRG Grant No.\ DMS-1265290,
NSF SaTC grant No. CNS-1513671, Simons Foundation grant 338379 and NSF Grant No.\ DMS 1107452, 1107263, 1107367 "RNMS: Geometric Structures and Representation Varieties" (the GEAR Network)}

\author[Greenberg]{R. Greenberg}
\address{R. Greenberg\\Dept. of Mathematics\\Univ. of Washington\\Box 354350\\ Seattle, WA 98195, USA}
\email{greenber@math.washington.edu}
\thanks{R. G. was partially supported by NSF FRG Grant No.\ DMS-1360902}

\author[Kakde]{M. Kakde}
\address{M. Kakde\\Dept. of Mathematics\\King's College\\Strand\\London WC2R 2LS, UK}
\email{mahesh.kakde@kcl.ac.uk}

\author[Pappas]{G. Pappas}
\address{G. Pappas\\ Dept. of
Mathematics\\
Michigan State Univ.\\
E. Lansing, MI 48824, USA}
\email{pappas@math.msu.edu}
\thanks{G. P.  was partially supported by  NSF FRG Grant No.\ DMS-1360733.}

\author[Taylor]{M. J. Taylor}
\address{M. J. Taylor\\ Merton College, Univ. of  
Oxford\\ Oxford, OX1 4JD, UK}
\email{martin.taylor@merton.ox.ac.uk}

\keywords{Chern-Simons theory, duality theorems}

\subjclass[2010]{11R34, 11R37, 81T45}

\date{\today}

\begin{abstract}
This paper concerns cup product pairings in \'etale cohomology related to work of M. Kim and of W. McCallum and R. Sharifi.  We will show that by considering Ext groups rather than cohomology groups, one arrives at a pairing which combines invariants defined by Kim with a pairing defined by McCallum and Sharifi.  We also prove a formula  for Kim's invariant in terms of Artin maps in the case of cyclic unramified Kummer extensions.  One consequence is that for all $n > 1$, there are infinitely many number fields $F$ over which there are both trivial and non-trivial Kim invariants associated to cyclic groups of order $n$.  \end{abstract}

\maketitle

\section{Introduction} 
\label{s:introsect}
\setcounter{equation}{0}

This paper concerns cup product pairings in \'etale cohomology which underlie an important case of
the arithmetic Chern-Simons theory introduced by M. Kim in \cite{Kim} as well as a pairing in Galois cohomology studied by
McCallum and Sharifi in \cite{McS}.  Our interest in these pairings arises from the search for  new numerical invariants 
of number fields which pertain to the higher codimension behavior of Iwasawa modules (see \cite{B}).

Suppose $F$ is a number field and $O_F$ is its ring of integers. Let $X = \mathrm{Spec}(O_F)$ and let
$\mu_n$ be the sheaf of $n^{th}$ roots of unity in the \'etale topology on $X$.
The pairing connected with Kim's work is the natural cup product pairing
\begin{equation}
\label{eq:pairit}
\xymatrix{  H^1(X,\mathbb{Z}/n) \times H^2(X,\mu_n) \ar[r] & H^3(X,\mu_n) \ar@{=}[r]^(.6){\mathrm{inv}_n} &\mathbb{Z}/n  }
\end{equation}
in \'etale cohomology when 
$\mathrm{inv}_n$ is the invariant map isomorphism (see  \cite[p. 538]{Mazur}).

Suppose $F$ contains the multiplicative group $\tilde{\mu}_n$ generated by a primitive $n^{th}$ root of unity, and
let $G$ be an abstract finite group  acting trivially on $\tilde{\mu}_n= \mu_n(X)$.
Let $\pi_1(X,\eta)$ be the \'etale fundamental group 
of $X$ relative to a fixed base point $\eta$. Then $\pi_1(X,\eta)$ is the Galois group of a maximal everywhere unramified extension of $F$.  
Suppose $c$ is a class in $H^3(G,\tilde{\mu}_n)$, and let $f:\pi_1(X,\eta) \to G$ be a fixed homomorphism. 
Then $f^*c \in H^3(\pi_1(X,\eta),\tilde{\mu}_n)$ defines via \v{C}ech cohomology a class
$f^*_X c \in H^3(X,\mu_n)$. Kim's invariant in \cite{Kim} in the unramified case is
\begin{equation}
\label{eq:kimclass}
S(f,c) = \mathrm{inv}_n(f^*_X c) \in \mathbb{Z}/n.
\end{equation}
In the ramified case, one replaces $X$ by the complement $X'$ of a non-empty finite set of closed points of $X$. 
One must then take a different approach, since $H^3(X',\mu_n) = \{0\}$;  see \cite{Kim}.  We will return to the ramified
case in a later paper.  

One way to compute (\ref{eq:kimclass}) is to employ the pairing (\ref{eq:pairit}). Namely, 
consider the
diagram of pairings 
\begin{equation}
\label{eq:cuppairnew}
\xymatrix @C=.2pc{
H^1(G,\mathbb{Z}/n)\ar[d]^{f^*_X} &\times &H^2(G,\tilde{\mu}_n)\ar[d]^{f^*_X} &\ar[rrr]&&&& H^3(G,\tilde{\mu}_n)\ar[d]^{f^*_X}\\
 H^1(X,\mathbb{Z}/n) &\times &H^2(X,\mu_n) &\ar[rrr]&&&& H^3(X,\mu_n)} 
\end{equation}
in which the vertical homomorphisms are induced by $f$.  Picking classes $c_1 \in H^1(G,\mathbb{Z}/n)$
and $c_2 \in H^2(G,\tilde{\mu}_n)$ such that $c_1 \cup c_2 = c$, the pairing (\ref{eq:pairit}) leads to a way to compute
\begin{equation}
\label{eq:Kimdef}
S(f,c) = f^*_X(c_1) \cup f_X^*(c_2).
\end{equation}
The McCallum-Sharifi pairing, on the other hand, is defined using Galois cohomology. It was 
defined in \cite{McS} using the cup product pairing
 \begin{equation}
 \label{eq:MC1}
 H^1(G_{F,S},\tilde{\mu}_n) \times H^1(G_{F,S},\tilde{\mu}_n) \to H^2(G_{F,S},\tilde{\mu}_n^{\otimes 2})
 \end{equation}
when $S$ is a finite set of places of $F$ containing all the places above $n$ and all real archimedean places, and $G_{F,S}$ is the Galois group of the 
maximal unramified outside $S$ extension of $F$.  

A pairing which incorporates both Kim's invariant for $G = \mathbb{Z}/n$ and the 
McCallum-Sharifi pairing is the cup product Ext pairing
\begin{equation}
\label{eq:mcsk}
\mathrm{Ext}^1_X(\mathbb{Z}/n,\mu_n) \times \mathrm{Ext}^2_X(\mathbb{Z}/n,\mu_n) \to 
\mathrm{Ext}^3_X(\mathbb{Z}/n,\mu_n^{\otimes 2}).
\end{equation}
To explain this, consider the exact sequence 
$$0 \to \mathbb{Z} \xrightarrow{\cdot n} \mathbb{Z} \to \mathbb{Z}/n \to 0$$
induced by multiplication by $n$.  The long exact Ext sequence associated to this sequence leads to a diagram 
\begin{equation}
\label{eq:bowow}
\xymatrix @C=.2pc{
0\ar[d]&&0\ar[d]&&&&&0\ar[d]\\
H^0(X,\mu_n)\ar[d]& &H^1(X,\mu_n)\ar[d] &&&&& H^2(X,\mu_n^{\otimes 2})\ar[d]  \\
 \mathrm{Ext}^1_X(\mathbb{Z}/n,\mu_n)\ar[d] & \times&\mathrm{Ext}^2_X(\mathbb{Z}/n,\mu_n)\ar[d]&\ar[rrr] &&&& \mathrm{Ext}^3_X(\mathbb{Z}/n,\mu_n^{\otimes 2})\ar[d]\\
 H^1(X,\mu_n) \ar[d]&\times &H^2(X,\mu_n)\ar[d] &\ar[rrr]&&&& H^3(X,\mu_n^{\otimes 2})\ar[d]\\
 0&&0&&&&&0
}
\end{equation}
in which the vertical sequences are exact and the pairings in the second and third rows are given by cup products.  Note
that we have natural isomorphisms 
\begin{equation}
\label{eq:tensoriso}
H^i(X,\mu_n^{\otimes j}) = H^i(X,\mathbb{Z}/n) \otimes \tilde{\mu}_n^{\otimes j}
\end{equation}
for all $i, j \ge 0$ since $\tilde{\mu}_n = H^0(X,\mu_n)$ has order $n$ by assumption.  

We show  the following result in \S \ref{s:prooflast}.  

\begin{theorem}
\label{thm:bigdiagram}The cup product in the bottom row of (\ref{eq:bowow}) can be used to compute Kim's invariant via (\ref{eq:cuppairnew}), (\ref{eq:Kimdef}) and (\ref{eq:tensoriso}).  This pairing is compatible with 
pushing forward the cup product in the middle row of (\ref{eq:bowow}).
The cup product pairing
\begin{equation}
\label{eq:natural}
H^1(X,\mu_n) \times H^1(X,\mu_n) \to H^2(X,\mu_n^{\otimes 2})
\end{equation}
is compatible with the McCallum-Sharifi pairing, which results from (\ref{eq:MC1}), via the natural inflation maps $H^i(X,\mu_n) \to H^i(G_{F,S},\tilde{\mu}_n)$.  
The pairing (\ref{eq:natural})
arises from the pairing in the second row of (\ref{eq:bowow}) by the natural pull back and push forward procedure.  Namely, suppose $\alpha \in H^1(X,\mu_n) $ pulls back to $\tilde{\alpha} \in \mathrm{Ext}^1_X(\mathbb{Z}/n,\mu_n)$ in the first column of (\ref{eq:bowow}),
and that $\beta \in H^1(X,\mu_n)$ has boundary $\partial \beta \in \mathrm{Ext}^2_X(\mathbb{Z}/n,\mu_n)$ under the first vertical map in
the second column of (\ref{eq:bowow}).  
Then 
\begin{equation}
\label{eq:cup}
\partial(\alpha \cup \beta) = -(\tilde{\alpha} \cup \partial \beta)
\end{equation}
where on the left $\partial$ is the boundary map
$H^2(X,\mu_n^{\otimes 2}) \to \mathrm{Ext}^3_X(\mathbb{Z}/n,\mu_n^{\otimes 2})$
in the third column of (\ref{eq:bowow}). 
 \end{theorem}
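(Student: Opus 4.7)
My plan is to prove the four assertions in the order stated, with the sign identity (\ref{eq:cup}) reserved for last as the main technical point.

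First I would identify the vertical short exact sequences in (\ref{eq:bowow}) as arising from the long exact sequence of $\mathrm{Ext}^*_X(-,\mu_n^{\otimes j})$ applied to $0\to\mathbb{Z}\xrightarrow{\cdot n}\mathbb{Z}\xrightarrow{\pi}\mathbb{Z}/n\to 0$. Since $\mu_n^{\otimes j}$ is killed by $n$, so is every $H^i(X,\mu_n^{\otimes j})$, and the long exact sequence decomposes into the short exact sequences forming the columns of (\ref{eq:bowow}). In Yoneda terms, the boundary $\partial$ is right composition with the class $[E]\in\mathrm{Ext}^1_X(\mathbb{Z}/n,\mathbb{Z})$ of the extension, and the other vertical map is the pushforward by $\pi$.

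With this in hand, the statement about Kim's invariant is a direct rewriting of (\ref{eq:Kimdef}) using the naturality square (\ref{eq:cuppairnew}) and the identification (\ref{eq:tensoriso}). The compatibility of the bottom and middle rows of (\ref{eq:bowow}) is the naturality of cup products with respect to $\pi$: the Ext cup product (tensor product of Ext classes) is functorial in the source argument, so applying $\pi^*$ intertwines the two pairings. For the McCallum--Sharifi compatibility I would use that the natural comparison $H^i(X,\mu_n)\to H^i(G_{F,S},\tilde\mu_n)$---obtained either from the Leray spectral sequence for $\mathrm{Spec}(O_F[1/n])\hookrightarrow X$ or by restriction to the generic point combined with inflation---is a graded ring homomorphism, so (\ref{eq:natural}) inflates to the pairing (\ref{eq:MC1}) of \cite{McS}.

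The core of the proof is the sign identity (\ref{eq:cup}). Writing $\alpha=\tilde\alpha\cdot\pi$ in Yoneda notation and using associativity of Yoneda composition together with the compatibility between derived tensor product and Yoneda composition, one expresses both sides as elements of $\mathrm{Ext}^3_X(\mathbb{Z}/n,\mu_n^{\otimes 2})$:
\begin{equation*}
\partial(\alpha\cup\beta) \;=\; (\alpha\otimes\beta)\cdot[E] \qquad\text{and}\qquad \tilde\alpha\cup\partial\beta \;=\; \tilde\alpha\otimes(\beta\cdot[E]),
\end{equation*}
where the identifications $\mathbb{Z}\otimes\mathbb{Z}\cong\mathbb{Z}$ and $\mathbb{Z}/n\otimes\mathbb{Z}/n\cong\mathbb{Z}/n$ are used throughout. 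The identity (\ref{eq:cup}) then reduces to a Koszul sign: when the shift used to perform the Yoneda composition with $[E]$ is distributed past the degree-one class $\beta$ inside the derived tensor product, a factor of $(-1)^{|\beta|\cdot|[E]|}=-1$ appears.

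The main obstacle is the careful bookkeeping of these Koszul signs and checking that the net sign matches the conventions implicit in (\ref{eq:bowow}) and (\ref{eq:tensoriso}). I would carry this out either abstractly in the derived category, using the graded commutativity of the symmetric monoidal structure, or more concretely by choosing cocycle representatives with respect to the projective resolution $[\mathbb{Z}\xrightarrow{\cdot n}\mathbb{Z}]$ of $\mathbb{Z}/n$ and computing directly. In either approach the minus sign in (\ref{eq:cup}) records the anticommutation of two degree-one classes passing one another in the Ext algebra.
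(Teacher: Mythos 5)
Your overall plan is sound, and the three preliminary assertions (the Kim-invariant compatibility, the middle/bottom-row compatibility, and the McCallum--Sharifi compatibility) are handled exactly as the paper does, by naturality of cup products under the morphism $\mathbb{Z}\to\mathbb{Z}/n$ and under inflation; so no issue there. For the sign identity (\ref{eq:cup}), however, your route is genuinely different from the paper's, and it is worth spelling out what each approach buys and where yours still has gaps.

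The paper works entirely within Swan's framework of pure injective resolutions. The Ext cup product is realized via a chosen morphism of resolutions $I^\bullet\otimes J^\bullet\to K^\bullet$, and the sign in (\ref{eq:cup}) is extracted not from an abstract Koszul rule but from an explicit morphism of short exact sequences (\ref{eq:bowow4}) connecting the Bockstein sequence $C_\bullet=(0\to\mathbb{Z}/n\to\mathbb{Z}/n^2\to\mathbb{Z}/n\to 0)$ to $A\otimes B_\bullet$, together with Swan's Lemma~3.2, which supplies the sign $(-1)$ for boundary maps interacting with tensor products. This construction deliberately routes the argument through the Bockstein boundary, and then a separate and non-trivial second step shows that the Bockstein boundary $\lambda$ equals the composite $\nu\circ\tau$ of pullback along $\mathbb{Z}\to\mathbb{Z}/n$ and the $(\,\cdot\,n)$-boundary --- this is exactly what glues (\ref{eq:bowow3}) to the third column of (\ref{eq:bowow}). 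Your Yoneda/derived-category approach collapses these two steps into one interchange identity, which is conceptually cleaner, but this is also where the unresolved issues are concentrated.

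Two specific gaps. First, your formula $\tilde\alpha\cup\partial\beta=\tilde\alpha\otimes(\beta\cdot[E])$ implicitly treats $\mathrm{id}_{\mathbb{Z}/n}\otimes[E]$ as an element of $\mathrm{Ext}^1_X(\mathbb{Z}/n,\mathbb{Z}/n)$ by tensoring the extension $[E]$ with $\mathbb{Z}/n$. But tensoring $0\to\mathbb{Z}\xrightarrow{\cdot n}\mathbb{Z}\to\mathbb{Z}/n\to0$ with $\mathbb{Z}/n$ is not exact ($\mathrm{Tor}_1(\mathbb{Z}/n,\mathbb{Z}/n)\neq0$), so identifying $\mathrm{id}\otimes[E]$ with the pushforward $\pi_*[E]$ --- which is what the Bockstein class actually is --- requires a real argument in which the distinction between $\otimes$ and $\otimes^L$ must be confronted. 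This is precisely what the paper's explicit map of short exact sequences (\ref{eq:bowow4}) resolves without recourse to derived tensor products. Second, your suggested alternative of computing "with respect to the projective resolution $[\mathbb{Z}\xrightarrow{\cdot n}\mathbb{Z}]$ of $\mathbb{Z}/n$" must be handled with care: the constant sheaf $\mathbb{Z}$ is not projective in the category of \'etale sheaves on $X$, so this two-term complex is not a projective resolution; one has to combine it with injective resolutions of the second variable (a Cartan--Eilenberg-type argument), which is exactly what Swan's pure injective resolutions supply. In short, your high-level plan and your diagnosis that the minus sign is a Koszul sign attached to moving a degree-one boundary class past the degree-one class $\beta$ are both correct, but the bookkeeping you flag as "the main obstacle" is not a routine verification --- it is the substance of the proof, and the paper's two-step route through Swan's machinery and the Bockstein comparison is one concrete way to carry it out without leaving any of these identifications to implicit convention.
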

 
 Note that the minus sign on the right side of (\ref{eq:cup}) comes from the definition of the differential of the total complex
of the tensor product of two complexes.  

Another pairing in Galois cohomology that is related to Kim's invariants and
different from the McCallum-Sharifi pairing is described in Theorem \ref{thm:fixitup} below.

In \cite{KimEtAl}, H. Chung, D. Kim, M. Kim, J. Park and H. Yoo showed how to compute Kim's invariant  by comparing local
and global trivializations of Galois three cocycles.  Using this method
they construct infinitely many examples in which the invariant is non-trivial and the finite group involved is either $\mathbb{Z}/2$, $\mathbb{Z}/2 \times \mathbb{Z}/2$ or the
symmetric group $S_4$.

Our next results use a different approach than \cite{KimEtAl} in the unramified case.   When $G$ is cyclic we prove in Theorem \ref{thm:fixed?} below a formula  that determines the invariant using Artin maps.   
One consequence of Theorem \ref{thm:fixed?}  is  the following result. This  shows that
there are infinitely many number fields $F$ over which there are both trivial and non-trivial Kim invariants associated to cyclic groups of order $n$. The methods of this paper carry over \textit{mutatis mutandis} to the case of global function fields provided $n$ is prime to the characteristic of the field.

\begin{theorem}
\label{thm:result} 
Suppose $n > 1$ is an integer, $G = \mathbb{Z}/n$ and that $c$ is a fixed generator of $H^3(G,\tilde{\mu}_n)$.  Then there are infinitely many totally complex number fields $F$ for which there are cyclic unramified Kummer extensions $K_1/F$ and $K_2/F$ with the following property. Let $f_i:\pi_1(X,\eta) \to G$ for $i = 1, 2$ be the inflation of an isomorphism $\mathrm{Gal}(K_i/F) \to G$.  Then 
\begin{equation}
\label{eq:Kiminvariant}
S(f_1,c) = 0 \quad \mathrm{and}\quad  S(f_2,c) \ne 0.
\end{equation}
\end{theorem}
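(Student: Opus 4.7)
The strategy is to invoke Theorem~\ref{thm:fixed?}, which for cyclic $G = \mathbb{Z}/n$ gives an explicit formula for $S(f,c)$ in terms of Artin maps, and then to produce infinitely many totally complex number fields $F$ each admitting two unramified cyclic degree-$n$ Kummer extensions whose associated $f_1, f_2$ yield the required values of the invariant. Fix a generator $c_1$ of $H^1(G,\mathbb{Z}/n)$ and choose $c_2 \in H^2(G,\tilde{\mu}_n)$ with $c_1 \cup c_2 = c$; for instance $c_2 = \beta(c_1) \otimes \zeta$ where $\zeta$ is a fixed primitive $n$-th root of unity and $\beta$ is the Bockstein from $0 \to \mathbb{Z}/n \to \mathbb{Z}/n^2 \to \mathbb{Z}/n \to 0$. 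Since $F \supseteq \tilde{\mu}_n$, Kummer theory identifies a surjection $f : \pi_1(X,\eta) \twoheadrightarrow G$ with a class $\bar{a}$ in the subgroup
\[
U_{F,n} = \bigl\{ \bar{a} \in F^{\times}/(F^{\times})^n : F(\sqrt[n]{a})/F \text{ is everywhere unramified} \bigr\},
\]
so that (up to the usual cyclotomic twist) $f^*_X c_1 = \bar{a}$ and $f^*_X c_2 = \beta(\bar{a}) \otimes \zeta$. Combining (\ref{eq:Kimdef}) with Theorem~\ref{thm:fixed?}, this expresses $S(f,c) = \Phi_F(\bar{a})$ for some function $\Phi_F : U_{F,n} \to \mathbb{Z}/n$ of degree at most two in $\bar{a}$, presented as a sum of Artin symbols.

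I would then produce infinitely many totally complex $F \supseteq \mathbb{Q}(\tilde{\mu}_n)$ for which the $\mathbb{F}_p$-rank of $\mathrm{Cl}(F)/p$ is at least three for every prime $p \mid n$, ensuring that $U_{F,n}$ is sufficiently large. A standard construction is to take
\[
F = \mathbb{Q}\bigl(\tilde{\mu}_n, \sqrt[n]{\ell_1}, \ldots, \sqrt[n]{\ell_k}\bigr)
\]
for distinct rational primes $\ell_j \equiv 1 \pmod{n}$ that split completely in $\mathbb{Q}(\tilde{\mu}_n)$; genus-theoretic estimates show that $\mathrm{Cl}(F)[n]$ has $\mathbb{F}_p$-rank growing with $k$, and varying the $\ell_j$ for $k$ fixed and large produces infinitely many such $F$. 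For $n > 2$ the field $F$ is automatically totally complex, and for $n = 2$ one adjoins $\sqrt{-1}$ if necessary.

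The final and main step is to establish that $\Phi_F$ is not identically zero on $U_{F,n}$ for infinitely many of the $F$ above. Once this is known, since $\Phi_F$ is polynomial of degree $\le 2$ on an $\mathbb{F}_p$-vector space of dimension $\ge 3$, a Chevalley--Warning-style argument produces both a non-zero $\bar{a}_1 \in U_{F,n}$ with $\Phi_F(\bar{a}_1) = 0$ and some $\bar{a}_2 \in U_{F,n}$ with $\Phi_F(\bar{a}_2) \ne 0$; the extensions $K_i = F(\sqrt[n]{a_i})$ and the inflations $f_i$ then realize (\ref{eq:Kiminvariant}). The principal obstacle is thus the non-vanishing of $\Phi_F$: Theorem~\ref{thm:fixed?} expresses $\Phi_F$ in terms of Artin symbols valued in an auxiliary extension (the cyclotomic tower $F(\mu_{n^2})/F$ together with the Hilbert $n$-class field of $F$ being the natural candidates), and showing that the symbol attached to at least one Kummer generator is non-trivial requires a delicate Chebotarev density argument controlling how the primes $\ell_j$ split in that auxiliary extension. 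Once this Galois-theoretic non-vanishing is secured, the theorem follows.
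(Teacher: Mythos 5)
Your approach diverges substantially from the paper's and, more importantly, has a genuine gap that you yourself acknowledge. The paper proves Theorem~\ref{thm:result} by a base-change construction feeding directly into Corollaries~\ref{cor:easy} and~\ref{cor:almost}: one starts with $F_1 \supseteq \mathbb{Q}(\zeta_{n^2})$ carrying a cyclic degree-$n^2$ unramified extension $N_1/F_1$ (for the vanishing example), then uses a carefully chosen prime $\mathcal{Q}$ of $F_1$, a ray-class-field argument to find a Kummer generator $\alpha \equiv 1 \pmod{\mathcal{M}}$ with $\mathrm{ord}_{\mathcal{Q}}(\alpha)=1$, and a local analysis at the completion to build a degree-$n$ extension $F/F_1$, totally ramified at $\mathcal{Q}$, so that $L_2 = N_2 F$ lands in the situation of Corollary~\ref{cor:almost} and $L_1 = N_1 F$ lands in Corollary~\ref{cor:easy}. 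No Chebotarev argument and no control of class-group rank is needed; the non-vanishing in Corollary~\ref{cor:almost} is forced entirely by local ramification data. This works uniformly for all integers $n>1$.

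The gap in your proposal is the non-vanishing of $\Phi_F$. You assert that $S(f,c) = \Phi_F(\bar a)$ for a function $\Phi_F$ ``of degree at most two in $\bar a$,'' but this is not justified and is, in fact, doubtful. The passage from the Kummer class $\bar a$ to the class $c_2$ and thence to the ideal $I$ in Theorem~\ref{thm:fixed?} (or to $\gamma = y$ in Theorem~\ref{thm:elementary}) proceeds through Hilbert's norm theorem and Hilbert~90 --- finding $x$ with $\mathrm{Norm}_{K/F}(x) = \zeta_n$ and then $y$ with $x^n = \sigma(y)/y$ --- and these are not polynomial operations in $\bar a$; they are implicit solutions of norm and coboundary equations that change discontinuously with $K$. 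Consequently the proposed Chevalley--Warning argument rests on a structure that has not been shown to exist. Moreover, even granting the polynomial shape, Chevalley--Warning would give a \emph{second zero} of $\Phi_F$ given one; it gives no handle on the non-vanishing, which you defer to ``a delicate Chebotarev density argument'' that is not supplied and is precisely the crux of the theorem. A further technical wrinkle: the theorem requires $f_i$ to be inflations of isomorphisms, i.e. surjections, so one needs $\bar a_1$ of exact order $n$ with $\Phi_F(\bar a_1)=0$, and Chevalley--Warning alone does not control the order of the zero it produces. In short, the argument is a plausible program but not a proof; the essential content of the paper's proof --- producing the explicit local ramification data that forces $S(f_2,c)$ to be of order $n$ --- is missing.
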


To state our formula for Kim's invariant in terms of Artin maps, let  $f:\pi_1(X,\eta) \to G = \mathbb{Z}/n$ be a fixed surjection.  Let $c_1 \in H^1(G,\mathbb{Z}/n) = \mathrm{Hom}(G,\mathbb{Z}/n)$ 
be the identity map, and let $c_2$ generate $H^2(G,\tilde{\mu}_n)$. Then $c = c_1 \cup c_2$ generates
the cyclic group $H^3(G,\tilde{\mu}_n)$ of order $n$. We wish to use the diagram (\ref{eq:cuppairnew}) to calculate $S(f,c)=f_X^*(c_1) \cup f_X^*(c_2)$.

The element $f_X^*(c_1) \in H^1(X,\mathbb{Z}/n)$ factors through an isomorphism 
$$\mathrm{Gal}(K/F) \to G = \mathbb{Z}/n$$ for a cyclic unramified extension $K/F$ of degree $n$ 
which we will use to identify $\mathrm{Gal}(K/F)$ with $G = \mathbb{Z}/n$.  

Using the exact sequence of multiplicative groups
\begin{equation}
\label{eq:exactly}
1 \to \tilde{\mu}_n \to K^* \to K^* \to K^*/(K^*)^n \to 1
\end{equation}
associated to exponentiation by $n$ on $K^*$ we will show that there is an exact sequence
\begin{equation}
\label{eq:boundary}
F^* \to (K^*/(K^*)^n)^{\mathrm{Gal}(K/F)} \to H^2(\mathrm{Gal}(K/F),\tilde{\mu}_n) \to 1.
\end{equation}
Let $\gamma \in K^*$ be such that $\gamma (K^*)^n \in (K^*/(K^*)^n)^G$ has image $c_2$ in 
$$H^2(\mathrm{Gal}(K/F),\tilde{\mu}_n) = H^2(G,\tilde{\mu}_n)$$ 
under the homomorphism in (\ref{eq:boundary}). 

\begin{theorem}
\label{thm:fixed?}
 The $O_F$ ideal $\mathrm{Norm}_{K/F}(\gamma) O_F$ is the $n^{th}$ power $I^n$ of
a fractional ideal $I$ of $F$.  The ideal class $ [I]$ of $I$ in the ideal class group $Cl(O_F)$ of $O_F$  depends only on $c_2$ and is $n$-torsion.  Let $\mathrm{Art}:Cl(O_F) \to \mathrm{Gal}(K/F) = G = \mathbb{Z}/n$ be the Artin map associated to $K/F$.  Then Kim's invariant of the class 
$c = c_1 \cup c_2 \in H^3(G,\tilde{\mu}_n)$ is 
\begin{equation} 
\label{eq:calculation}
S(f,c) = f_X^* c_1 \cup f_X^*c_2 = \mathrm{Art}([I]) \in G = \mathbb{Z}/n.
\end{equation}
\end{theorem}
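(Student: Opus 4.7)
The plan is to address the three assertions of the theorem in turn.

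For the first, that $\mathrm{Norm}_{K/F}(\gamma)O_F=I^n$, I fix a prime $\mathfrak{p}$ of $F$ and a prime $\mathfrak{P}$ of $K$ above it. Because $K/F$ is unramified, the $G$-action on the primes of $K$ above $\mathfrak{p}$ is transitive, all such primes have ramification index $e=1$, and $f_\mathfrak{p}g_\mathfrak{p}=n$. The $G$-invariance of $\gamma(K^*)^n$ forces $v_\mathfrak{Q}(\gamma)\equiv v_\mathfrak{P}(\gamma)\pmod n$ for every prime $\mathfrak{Q}$ of $K$ over $\mathfrak{p}$, so
\[
v_\mathfrak{p}(\mathrm{Norm}_{K/F}(\gamma)) = f_\mathfrak{p}\sum_{\mathfrak{Q}|\mathfrak{p}}v_\mathfrak{Q}(\gamma)\equiv f_\mathfrak{p} g_\mathfrak{p}\,v_\mathfrak{P}(\gamma) = n\,v_\mathfrak{P}(\gamma)\equiv 0\pmod n.
\]
For the second assertion, the exactness of (\ref{eq:boundary}) shows that $\gamma$ is determined by $c_2$ modulo $F^*\cdot(K^*)^n$. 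Replacing $\gamma$ by $\alpha\gamma$ with $\alpha\in F^*$ changes $I$ by the principal ideal $(\alpha)$, while replacing $\gamma$ by $\delta^n\gamma$ with $\delta\in K^*$ changes $I$ by $(\mathrm{Norm}_{K/F}(\delta))$, so $[I]\in Cl(O_F)$ is well-defined. It is $n$-torsion because $I^n=(\mathrm{Norm}_{K/F}(\gamma))$ is principal.

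For the main assertion, the plan is to reduce the \'etale cup product to a class-field-theoretic pairing and then to identify the relevant \'etale cohomology class with the Kummer image of $[I]$. By Artin reciprocity, $H^1(X,\mathbb{Z}/n)=\mathrm{Hom}(\pi_1(X,\eta),\mathbb{Z}/n)=\mathrm{Hom}(Cl(O_F),\mathbb{Z}/n)$, and $f_X^*(c_1)$ corresponds to $\mathrm{Art}_{K/F}$ under this identification. The Kummer sequence on $X$ yields $Cl(O_F)/n\hookrightarrow H^2(X,\mu_n)$, and a standard consequence of Artin--Verdier duality asserts that for any $\phi\in H^1(X,\mathbb{Z}/n)$ and $[J]\in Cl(O_F)/n$, the cup product $\phi\cup[J]\in H^3(X,\mu_n)=\mathbb{Z}/n$ equals $\phi([J])$. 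Moreover, I claim $f_X^*(c_2)$ lies in $Cl(O_F)/n\subset H^2(X,\mu_n)$: since (\ref{eq:boundary}) realizes $c_2$ as the image of the transgression in the 5-term sequence for $1\to G_K\to G_F\to G\to 1$, its inflation to $H^2(G_F,\tilde{\mu}_n)=\mathrm{Br}(F)[n]$ vanishes, and the injectivity of $\mathrm{Br}(X)\hookrightarrow \mathrm{Br}(F)$ then forces the image of $f_X^*(c_2)$ in $\mathrm{Br}(X)[n]$ to be zero. The theorem thus reduces to identifying the resulting class in $Cl(O_F)/n$ with $[I]$.

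This identification is the heart of the proof and the principal obstacle. My approach is to represent $c_2$ by the 2-cocycle $c_2(g,h)=g(\eta_h)\,\eta_g\,\eta_{gh}^{-1}\in\tilde{\mu}_n$ with $\eta_g\in K^*$ satisfying $\eta_g^n=g(\gamma)/\gamma$, and then to exploit the key identity that $\mu:=\gamma\prod_{g\in G}\eta_g\in K^*$ satisfies $\mu^n=\mathrm{Norm}_{K/F}(\gamma)\in F^*$ and has divisor $(\mu)_K=I\cdot O_K$. The element $\mu$ thus realizes, on the \'etale cover $X_K\to X$, a simultaneous trivialization of the $\mu_n$-gerbe attached to $f_X^*(c_2)$ and of the Kummer $\mu_n$-gerbe of $[I]$, the latter recording the obstruction to extracting an $n$-th root of the canonical trivialization of $I^n$ by $\mathrm{Norm}_{K/F}(\gamma)$. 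A careful \v{C}ech cocycle comparison---tracking how $\mu$, the $\eta_g$, and the fractional ideal $I$ interact with the descent data for $X_K\to X$---is expected to identify the two classes in $H^2(X,\mu_n)$. The delicate technical point is to control the relevant pieces of the Hochschild--Serre filtration for $X_K\to X$ and to verify compatibility with the Kummer connecting map, so that the two \emph{a priori} distinct descriptions of the class coincide.
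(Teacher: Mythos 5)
The first two assertions are handled correctly: the valuation computation for the existence of $I$ is essentially the one appearing in the paper (cf. the computation leading to (\ref{eq:wcongruence})), and the well-definedness and $n$-torsion of $[I]$ follow from (\ref{eq:boundary}) exactly as you say.

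The proof of the formula (\ref{eq:calculation}), however, has genuine gaps. Your strategy is to place $f_X^*(c_2)$ inside a copy of $Cl(O_F)/n$ in $H^2(X,\mu_n)$, identify it with $[I]$, and use a claimed evaluation formula $\phi\cup[J]=\phi([J])$. The first step already fails as stated: the Kummer sequence $1\to\mu_n\to\mathbb{G}_m\to\mathbb{G}_m\to 1$ is \emph{not} exact as \'etale sheaves on $X=\mathrm{Spec}(O_F)$, since the places above $n$ lie in $X$. More to the point, $H^2(X,\mu_n)$ is simply not $Cl(O_F)/n$: combining Artin--Verdier duality with the exact sequence (\ref{eq:bound}) gives $|H^2(X,\mu_n)|=|O_F^*/(O_F^*)^n|\cdot|\mathrm{Pic}(X)[n]|=n^{r_2}\cdot|Cl(O_F)/n|$ (with $r_2\geq 1$ since $F$ is totally complex), so the group is strictly larger. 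Dualizing $\delta$ in (\ref{eq:bound}) does yield an injection $\mathrm{Pic}(X)[n]^\star\hookrightarrow H^2(X,\mathbb{Z}/n)$, but this is not the same thing as a canonical copy of $Cl(O_F)/n$, and your Brauer-group argument for why $f_X^*(c_2)$ should land there is vacuous, since $\mathrm{Br}(X)=0$ and the map $H^2(X,\mu_n)\to\mathrm{Br}(X)$ carries no information.

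The evaluation formula $\phi\cup[J]=\phi([J])$ is asserted as ``a standard consequence of Artin--Verdier duality'' but never verified, and it is precisely the kind of statement whose sign and normalization must be tracked; the paper's Lemma \ref{lem:cupcalc} is devoted to establishing the analogous Hilbert-symbol/Artin-map formula carefully. Finally, and most seriously, the identification of $f_X^*(c_2)$ with the Kummer class of $[I]$---which you yourself call ``the heart of the proof and the principal obstacle''---is never carried out: the passage reads ``is expected to identify the two classes'' and ``the delicate technical point is to control\dots,'' which is a statement of intent, not a proof. The paper's actual argument avoids all three difficulties by working with Mazur's explicit idelic description of Artin--Verdier duality (Lemmas \ref{lem:H2picture}, \ref{lem:r1comp}, \ref{lem:cupcalc}), representing $f_X^*(c_2)$ via the spectral-sequence transgression $\omega(z(j))$ for an idele $j$ built from $\gamma$ (Lemma \ref{lem:nicegamma}), and computing the pairing by local Hilbert symbols; the congruence (\ref{eq:wcongruence}) then gives $\mathrm{Art}(j)=\mathrm{Art}([I])$ directly, bypassing any need to identify $f_X^*(c_2)$ with an ideal class.
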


Note that in this result, the input is $f$ and $c_2$, from which one determines $K$ and $\gamma$.  Conversely,
we now show how one can start with a cyclic unramified degree $n$ Kummer extension $K/F$ and then use this to determine
 an $f$ and $c_2$ for which (\ref{eq:calculation}) holds.   
 
For the remainder of the paper we fix the following choices.
 
\begin{definition}
\label{def:roots} Let $\zeta_n$ be a  primitive $n^{th}$ root of unity in $F$. If $m$ is a divisor of $n$, we let $\zeta_m = \zeta_n^{n/m}$.
\end{definition}

\begin{theorem}
\label{thm:elementary}
Suppose $K/F$ is an everywhere unramified cyclic degree $n$ Kummer extension of number fields.    By Hilbert's norm theorem, $\zeta_n = \mathrm{Norm}_{K/F}(x)$ for some $x \in K$.  By Hilbert's Theorem 90, $x^n = \sigma(y)/y$ for some $y \in K$ and a generator $\sigma$ for $G = \mathrm{Gal}(K/F)$.  For all such $y$, there is a fractional $O_F$-ideal $J$ such that $\mathrm{Norm}_{K/F}(y)O_F = J^n$. Let $c_1:G \to \mathbb{Z}/n$ be the isomorphism sending $\sigma$ to $1$ mod $n$.  Let $\gamma = y$ in   Theorem \ref{thm:fixed?}, and let $c_2 \in H^2(G,\tilde{\mu}_n)$ be the image of $\gamma (K^*)^n$ under (\ref{eq:boundary}).  Then $c_2$ generates 
$H^2(G,\tilde{\mu}_n)$, $J$ is the ideal $I$ of Theorem \ref{thm:fixed?} and
$S(f,c)$ is given by  (\ref{eq:calculation}) when $c = c_1 \cup c_2$.  
\end{theorem}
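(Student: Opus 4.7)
The plan is to check three things and then invoke Theorem \ref{thm:fixed?}: (i) the ideal $\mathrm{Norm}_{K/F}(y)O_F$ is the $n$th power $J^n$ of a fractional $O_F$-ideal $J$; (ii) the class $c_2$ built from $y$ generates $H^2(G,\tilde{\mu}_n)$; (iii) this $J$ is the ideal $I$ attached to $c_2$ by Theorem \ref{thm:fixed?}. Once (i)--(iii) hold, Theorem \ref{thm:fixed?} applied with $\gamma = y$ immediately yields $S(f,c)=\mathrm{Art}([I])$ for $c = c_1\cup c_2$. The existence of $x$ and $y$ is invoked as stated: $\zeta_n$ is a local unit at every finite place and $K/F$ is everywhere unramified, so $\zeta_n$ is a local norm everywhere and Hasse's cyclic norm theorem produces the global $x$; then $\mathrm{Norm}_{K/F}(x^n)=\zeta_n^n=1$ lets Hilbert 90 produce $y$.

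For (i), I would iterate $\sigma(y)=yx^n$ inductively to obtain $\sigma^i(y)=y\cdot\bigl(x\,\sigma(x)\cdots\sigma^{i-1}(x)\bigr)^n$ for $0\le i<n$. Multiplying over $i$,
\[
\mathrm{Norm}_{K/F}(y)\;=\;\Bigl(y\cdot\prod_{i=0}^{n-1} x\,\sigma(x)\cdots\sigma^{i-1}(x)\Bigr)^{\!n}\;=\;w^n
\]
for an explicit $w\in K^*$. Since $K/F$ is everywhere unramified, for any prime $\mathfrak{p}$ of $O_F$ and any $\mathfrak{P}$ of $O_K$ above $\mathfrak{p}$ we have $v_\mathfrak{p}(\mathrm{Norm}_{K/F}(y))=v_\mathfrak{P}(\mathrm{Norm}_{K/F}(y))=n\,v_\mathfrak{P}(w)$, so the valuation is divisible by $n$; hence $\mathrm{Norm}_{K/F}(y)O_F=J^n$ with $J=\prod_\mathfrak{p}\mathfrak{p}^{v_\mathfrak{p}(\mathrm{Norm}_{K/F}(y))/n}$.

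For (ii), I would trace $y(K^*)^n$ through the boundary map of (\ref{eq:boundary}) obtained from (\ref{eq:exactly}), split as $1\to\tilde{\mu}_n\to K^*\to (K^*)^n\to 1$ and $1\to (K^*)^n\to K^*\to K^*/(K^*)^n\to 1$. The class $y(K^*)^n$ is $G$-invariant because $\sigma(y)/y=x^n\in (K^*)^n$. The first connecting map sends it to the class in $H^1(G,(K^*)^n)$ of the $1$-cocycle $\sigma\mapsto\sigma(y)/y=x^n$. Using the periodic resolution of the cyclic group $G$, one has $H^1(G,(K^*)^n)=\ker(N_{K/F})/\mathrm{im}(\sigma-1)$ and $H^2(G,\tilde{\mu}_n)=\tilde{\mu}_n/\tilde{\mu}_n^n=\tilde{\mu}_n$; the second connecting map lifts $x^n$ to $x\in K^*$ and returns $\mathrm{Norm}_{K/F}(x)=\zeta_n$. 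Since $\zeta_n$ generates $\tilde{\mu}_n$, the class $c_2$ generates $H^2(G,\tilde{\mu}_n)$.

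Setting $\gamma=y$ in Theorem \ref{thm:fixed?}, the defining relation $\mathrm{Norm}_{K/F}(\gamma)O_F=I^n$ becomes $\mathrm{Norm}_{K/F}(y)O_F=I^n$, so $I=J$ by (i), and (\ref{eq:calculation}) gives the claim. The one delicate point is the cohomological computation in step (ii), where the risk is sign and orientation conventions in the two boundary maps; the main obstacle is checking compatibility between the generator $\sigma$ used to define the isomorphism $c_1$ and the periodic resolution used to identify $H^2(G,\tilde{\mu}_n)$ with $\tilde{\mu}_n$, so that the image is indeed $\zeta_n$ and not some other generator (though any generator suffices for the conclusion that $c_2$ generates $H^2(G,\tilde{\mu}_n)$).
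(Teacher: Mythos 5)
Your proposal is correct and follows essentially the same route as the paper: both verify $G$-invariance of $y(K^*)^n$ via $\sigma(y)/y = x^n$, and both trace the two boundary maps to show the image is the class of $\zeta_n = \mathrm{Norm}_{K/F}(x)$ in $\hat{H}^0(G,\tilde{\mu}_n) = \tilde{\mu}_n$, hence a generator. The paper packages the boundary computation via Tate cohomology and cup product with a generator $t\in H^2(G,\mathbb{Z})$, whereas you unwind the periodic resolution directly, and your step (i) explicitly reproves the factorization $\mathrm{Norm}_{K/F}(y)O_F = J^n$ that is already asserted by Theorem \ref{thm:fixed?}; these are notational or minor redundancies, not a different method.
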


This theorem leads to the following result concerning the functorality of Kim's invariant under
base extensions.

\begin{corollary}  
\label{cor:elemcor1}
Suppose $F'$ is a finite extension of $F$ which is disjoint from $K$, and let
$K' = F'K$ be the compositum of $F'$ and $K$.  The ideal $I'$ associated to $K'/F'$
by Theorem \ref{thm:elementary} may be taken to be $IO_{F'}$.  Kim's invariant for 
$K'/F'$ is the image of the invariant for $K/F$ under the transfer map $Tr_{K'/K}:\mathrm{Gal}(K/F) \to \mathrm{Gal}(K'/F')$ we identify both of these Galois groups with $\mathbb{Z}/n$.  
\end{corollary}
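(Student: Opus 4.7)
The plan is to apply Theorem~\ref{thm:elementary} in parallel to $K/F$ and $K'/F'$, exploiting the fact that because $F'$ and $K$ are linearly disjoint over $F$, the restriction map $\rho:\mathrm{Gal}(K'/F') \xrightarrow{\sim} \mathrm{Gal}(K/F)$ is an isomorphism through which all the relevant data transports. I will use $\rho$ to carry the chosen generator $\sigma$ of $\mathrm{Gal}(K/F)$ to a generator $\sigma'$ of $\mathrm{Gal}(K'/F')$ and to identify the isomorphism $c_1':\mathrm{Gal}(K'/F') \to \mathbb{Z}/n$ with $c_1$.

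First I would re-use the Hilbert 90 data. Because $\rho$ is the restriction isomorphism, $\mathrm{Norm}_{K'/F'}(z) = \mathrm{Norm}_{K/F}(z)$ for every $z \in K$. Hence the same $x \in K$ with $\mathrm{Norm}_{K/F}(x) = \zeta_n$ serves as the norm theorem witness for $K'/F'$, and the Hilbert 90 relation $x^n = \sigma(y)/y$ holds with $\sigma'$ in place of $\sigma$ and the same $y \in K \subset K'$. Computing
$$\mathrm{Norm}_{K'/F'}(y)\, O_{F'} = \mathrm{Norm}_{K/F}(y)\, O_{F'} = I^n O_{F'} = (IO_{F'})^n$$
then shows that one may take $I' = IO_{F'}$ in the application of Theorem~\ref{thm:elementary} to $K'/F'$. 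A small but crucial check is that the class $c_2' \in H^2(\mathrm{Gal}(K'/F'),\tilde{\mu}_n)$ produced from $y$ by the sequence \eqref{eq:boundary} for $K'/F'$ agrees, under $\rho^*$, with the $c_2$ produced from $y$ for $K/F$; this follows by naturality of the connecting homomorphism in group cohomology with respect to the inclusions $(\tilde{\mu}_n,K^*) \hookrightarrow (\tilde{\mu}_n,K'^*)$ equivariant for $\rho$.

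Given $I' = IO_{F'}$, Theorem~\ref{thm:elementary} yields $S(f',c) = \mathrm{Art}_{F'}([IO_{F'}]) \in \mathrm{Gal}(K'/F')$. It remains to identify this with the image of $S(f,c) = \mathrm{Art}_F([I])$ under the transfer. This is the standard compatibility from class field theory: the map $[I] \mapsto [IO_{F'}]$ on ideal class groups is intertwined with the Verlagerung $\mathrm{Gal}(K/F) \to \mathrm{Gal}(K'/F')$ by the Artin isomorphisms. I would verify it prime by prime using $\mathrm{Frob}_{\mathfrak{P}'}|_K = \mathrm{Frob}_{\mathfrak{p}}^{f(\mathfrak{P}'/\mathfrak{p})}$ together with $\sum_i e_i f_i = [F':F]$, so that after identifying both Galois groups with $\mathbb{Z}/n$ via $c_1$ and $c_1'$ the transfer becomes multiplication by $[F':F]$, matching the transformation of \eqref{eq:calculation}.

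I do not expect any individual step to present a serious obstacle. The main point requiring care is the bookkeeping of identifications: confirming that the various cohomology classes, ideal classes, and Frobenius elements are being compared via the correct naturality maps, and in particular that the class $c_2$ for $K/F$ and the class $c_2'$ for $K'/F'$ really correspond under $\rho^*$, so that the invariants $S(f,c)$ and $S(f',c)$ are attached to genuinely compatible cohomology data.
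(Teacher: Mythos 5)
Your proof is correct and supplies the argument the paper leaves implicit (the paper states the corollary as a consequence of Theorem \ref{thm:elementary} without giving details). The key observations — that the restriction isomorphism $\rho:\mathrm{Gal}(K'/F')\to\mathrm{Gal}(K/F)$ lets one reuse the same $x$, $\sigma$, $y$ from Theorem \ref{thm:elementary} (so $\mathrm{Norm}_{K'/F'}(y)O_{F'}=(IO_{F'})^n$ and $I'=IO_{F'}$), that the connecting maps in (\ref{eq:boundary}) for $K/F$ and $K'/F'$ are compatible under $\rho$, and that the Artin reciprocity map intertwines extension of ideals $[I]\mapsto[IO_{F'}]$ with the Verlagerung, which after identifying both Galois groups with $\mathbb{Z}/n$ via $c_1$, $c_1'$ is multiplication by $[F':F]$ — are exactly what is needed, and you check each of them correctly.
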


Theorem \ref{thm:elementary}  gives the following criterion for the non-triviality of Kim's invariant for cyclic unramified Kummer extensions.

\begin{corollary}
\label{cor:elemcor2}
With the notations of Theorem \ref{thm:elementary}, the following are equivalent:
\begin{enumerate}
\item[i.]  The invariant $S(f,c)$ is trivial for all 
$f:\pi_1(X,\eta) \to G = \mathbb{Z}/n$ factoring through $\mathrm{Gal}(K/F)$ and all $c \in H^3(G,\tilde{\mu}_n)$.
\item[ii.] 
$[J]$ is contained in $\mathrm{Norm}_{K/F}(Cl(O_K))$.
\item[iii.] The image of $[J]$ under the Artin map 
$\mathrm{Art}:Cl(O_F) \to 
\mathrm{Gal}(K/F)$ is trivial. 
\end{enumerate}
\end{corollary}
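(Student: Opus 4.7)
The plan is to deduce the corollary directly from Theorem \ref{thm:elementary} together with class field theory for the everywhere unramified abelian extension $K/F$. The equivalence (ii) $\Leftrightarrow$ (iii) is immediate: Artin reciprocity identifies $\mathrm{Gal}(K/F)$ with the quotient $Cl(O_F)/\mathrm{Norm}_{K/F}(Cl(O_K))$, so $\mathrm{Art}([J]) = 0$ in $\mathrm{Gal}(K/F)$ if and only if $[J] \in \mathrm{Norm}_{K/F}(Cl(O_K))$.

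For (i) $\Rightarrow$ (iii), I would specialize. Pick a generator $\sigma$ of $G = \mathrm{Gal}(K/F)$, let $f:\pi_1(X,\eta) \to G = \mathbb{Z}/n$ be the projection composed with the isomorphism $\sigma \mapsto 1 \bmod n$, and construct $x$, $y$, $c_1$, $c_2$ and $J$ as in Theorem \ref{thm:elementary}. That theorem yields $S(f, c_1 \cup c_2) = \mathrm{Art}([J])$, and hypothesis (i) forces this common value to be $0$, giving (iii).

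For (iii) $\Rightarrow$ (i), I would first note that by functoriality it suffices to treat surjective $f$: a non-surjective $f$ with image $H \leq \mathbb{Z}/n$ factors as $\iota \circ f_0$ for the inclusion $\iota$ of $H$ and a surjection $f_0$ onto $H$, and its invariant is computed from that of the intermediate cyclic subextension of $K/F$ cut out by $\ker f$, which inherits its ideal-class triviality from $\mathrm{Art}([J]) = 0$ by compatibility of Artin maps with norms. For a given surjective $f$, choose $\sigma$ with $f(\sigma) = 1 \bmod n$ and apply Theorem \ref{thm:elementary} to produce $c_2$ generating $H^2(G, \tilde{\mu}_n)$; together with $c_1$, the identity generator of $H^1(G, \mathbb{Z}/n)$, the cup product $c_1 \cup c_2$ generates the cyclic group $H^3(G, \tilde{\mu}_n)$ of order $n$. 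Theorem \ref{thm:elementary} then gives $S(f, c_1 \cup c_2) = \mathrm{Art}([J]) = 0$, and the $\mathbb{Z}/n$-bilinearity of the pairing (\ref{eq:cuppairnew}) propagates the vanishing to every $c \in H^3(G, \tilde{\mu}_n)$.

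The main obstacle is not a conceptual one but rather the bookkeeping check that $[J] \in Cl(O_F)$ is intrinsic to the pair $(K/F, \sigma)$, i.e., independent of the auxiliary choices of $x$ and $y$ in Theorem \ref{thm:elementary}, so that condition (iii) is well-posed. This follows because the freedom in $y$ consists of multiplication by elements of $F^*$ and by $n$-th powers from $K^*$, each of which scales $\mathrm{Norm}_{K/F}(y)O_F$ by the $n$-th power of a principal $O_F$-ideal and hence leaves $[J]$ fixed in $Cl(O_F)$; this is the same well-definedness already implicit in Theorem \ref{thm:fixed?}'s assertion that $[I]$ depends only on $c_2$.
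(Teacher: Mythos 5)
The paper states Corollary \ref{cor:elemcor2} without giving a separate proof, treating it as a direct consequence of Theorem \ref{thm:elementary} together with class field theory; your argument fills in that consequence along the lines the paper intends. The equivalence (ii) $\Leftrightarrow$ (iii) via Artin reciprocity for the unramified extension $K/F$, and the deduction of (i) $\Leftrightarrow$ (iii) for surjective $f$ from the identity $S(f,c_1\cup c_2)=\mathrm{Art}([J])$ of Theorem \ref{thm:elementary}, together with $\mathbb{Z}/n$-linearity of $c\mapsto S(f,c)$, are all correct and are the intended route. The well-definedness remark at the end is also apt; note, however, that you check only the freedom in $y$, while the freedom in the generator $\sigma$ (which changes $c_1$, hence $f$ and $c_2$) scales $[J]$ by a unit modulo $n$, which is harmless for the \emph{triviality} of $\mathrm{Art}([J])$ but worth recording.

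The one place I would push back is the reduction of the non-surjective case of (iii) $\Rightarrow$ (i). You propose factoring $f=\iota\circ f_0$ with $f_0$ a surjection onto $H\le \mathbb{Z}/n$ and invoking ``the intermediate cyclic subextension of $K/F$ cut out by $\ker f$'' together with ``compatibility of Artin maps with norms.'' This gestures at applying a version of Theorem \ref{thm:elementary} to the degree-$d$ subextension $K_0/F$ (where $d=\lvert H\rvert$), but that theorem is formulated for the coefficient module $\tilde{\mu}_d$, whereas $S(f,c)$ for $c\in H^3(G,\tilde{\mu}_n)$ is computed from the class $\iota^*c\in H^3(H,\tilde{\mu}_n)$, and the natural comparison map $H^3(H,\tilde{\mu}_d)\to H^3(H,\tilde{\mu}_n)$ need not be injective or surjective (e.g.\ for $n=4$, $d=2$ the inclusion $\tilde{\mu}_2\hookrightarrow\tilde{\mu}_4$ induces the zero map on $H^3(\mathbb{Z}/2,-)$). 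So as written, this step has a gap. The repair is cheap and avoids intermediate fields altogether: any $f$ factoring through $\mathrm{Gal}(K/F)$ satisfies $f=\bar{f}\circ\rho$ for the canonical surjection $\rho:\pi_1(X,\eta)\to\mathrm{Gal}(K/F)$ and some $\bar{f}:\mathrm{Gal}(K/F)\to G$, so $f_X^*c=\rho_X^*(\bar{f}^*c)$ lies in the image of the fixed map $\rho_X^*:H^3(\mathrm{Gal}(K/F),\tilde{\mu}_n)\to H^3(X,\mu_n)$; for surjective $f$ the classes $\bar{f}^*c$ already sweep out all of $H^3(\mathrm{Gal}(K/F),\tilde{\mu}_n)$, so triviality for surjective $f$ forces $\rho_X^*$ to vanish, and hence $S(f,c)=0$ for every $f$ factoring through $\mathrm{Gal}(K/F)$.
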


We now describe another way to find an element $\gamma \in K$ with the properties
in Theorem \ref{thm:fixed?}.  This method will be used to show Theorem \ref{thm:result}.

\begin{theorem}
\label{thm:intrinsic}  Let $f:\pi_1(X,\eta) \to G = \mathbb{Z}/n = \mathrm{Gal}(K/F)$ be as above with $c_1:G \to \mathbb{Z}/n$ the
identity map.
\begin{enumerate}
\item[i.] There is a cyclic degree $n^2$ extension $L/F$ such that $K \subset L$.
This extension is unique up to twisting by a cyclic degree $n$ extension of $F$, in the following sense.  Write $L = K(\gamma^{1/n})$ for some Kummer generator $\gamma$.   If $L'$ is any other cyclic degree $n^2$ extension of $F$ which contains $K$, then $L' = K(\gamma'^{1/n})$ for some  $\gamma' \in \gamma \cdot (K^*)^n \cdot F^*$, and conversely all such $\gamma'$ give rise to such $L'$.  
\item[ii.] The coset $\gamma (K^*)^n$ of $K^*/(K^*)^n$ is fixed by the action of $\mathrm{Gal}(K/F)$.  Let $c_2 \in H^2(\mathrm{Gal}(K/F),\tilde{\mu}_n) = H^2(G,\tilde{\mu}_n)$ be the image of $\gamma (K^*)^n$ under the boundary map in (\ref{eq:boundary}).  
The formula in (\ref{eq:calculation})
determines $S(f,c)$ when $c$ is the generator $c_1 \cup c_2$ of $H^3(G,\mu_n)$.  
\item[iii.]Suppose the ideal $I$
of Theorem \ref{thm:fixed?} has the form $I = I' \cdot J'$ for some fractional ideals $I'$ and $J'$ of $O_F$ such that any prime in the support of $J'$ is either split in $K$ or unramified in $L/K$.  Then 
\begin{equation}
\label{eq:neater}
S(f,c) = f_X^* c_1 \cup f_X^*c_2  = \mathrm{Art}([I']) \in G = \mathbb{Z}/n.
\end{equation}
\end{enumerate}
\end{theorem}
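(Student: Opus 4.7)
For part (i), my plan is to invoke Theorem \ref{thm:elementary}: by Hilbert's norm theorem (valid because $K/F$ is an everywhere unramified cyclic extension), pick $x \in K^*$ with $\mathrm{Norm}_{K/F}(x) = \zeta_n$, and then by Hilbert 90 applied to $x^n$ choose $y \in K^*$ with $\sigma(y)/y = x^n$. I take $L = K(y^{1/n})$ and choose a lift $\tilde{\sigma} \in \mathrm{Gal}(L/F)$ of $\sigma$. Since $\tilde{\sigma}(y^{1/n})^n = \sigma(y) = yx^n$, one has $\tilde{\sigma}(y^{1/n}) = \zeta_n^j \cdot y^{1/n} \cdot x$ for some $j$, and induction yields $\tilde{\sigma}^n(y^{1/n}) = \zeta_n \cdot y^{1/n}$, with the $\zeta_n$ factor arising precisely from $\mathrm{Norm}_{K/F}(x) = \zeta_n$. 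This forces $\tilde{\sigma}$ to have order $n^2$, so $L/F$ is cyclic of that degree. For the classification, let $L' = K((\gamma')^{1/n})$ be any cyclic degree $n^2$ extension of $F$ containing $K$. Being Galois over $F$ with abelian Galois group forces $\gamma'(K^*)^n$ to be $G$-fixed (the conjugation action of $G$ on $\mathrm{Gal}(L'/K) \cong \tilde{\mu}_n$ must be trivial). The standard identification realizes $\partial(\gamma'(K^*)^n) \in H^2(G,\tilde{\mu}_n)$ as the class of the group extension $1 \to \mathrm{Gal}(L'/K) \to \mathrm{Gal}(L'/F) \to G \to 1$, so cyclicity of $\mathrm{Gal}(L'/F)$ is equivalent to this class being a generator of $H^2(G,\tilde{\mu}_n) \cong \mathbb{Z}/n$. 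Replacing $\gamma'$ by $(\gamma')^j$ for a suitable $j$ coprime to $n$ (which does not change $L'$), we may assume this generator equals $c_2 = \partial(\gamma(K^*)^n)$, and exactness of (\ref{eq:boundary}) then gives $\gamma'/\gamma \in F^* \cdot (K^*)^n$. Conversely, any $\gamma' \in \gamma \cdot (K^*)^n \cdot F^*$ not lying in $(K^*)^n$ satisfies $\partial(\gamma') = c_2$, so $K((\gamma')^{1/n})/F$ is cyclic of degree $n^2$.

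Part (ii) is then essentially immediate: the $G$-stability of $\gamma(K^*)^n$ and the fact that $c_2$ generates $H^2(G,\tilde{\mu}_n)$ were both established in (i), and these are precisely the hypotheses for Theorem \ref{thm:fixed?}. Applying that theorem to $\gamma$ and $c = c_1 \cup c_2$ produces formula (\ref{eq:calculation}) at once.

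For part (iii), I take $I = I' \cdot J'$ to be the coprime decomposition in which $J'$ captures the full $\mathfrak{p}$-contribution of $I$ at every prime in its support (this is the canonical choice producing the formula, since absorbing a prime power from $J'$ into $I'$ at a prime whose Artin image is nontrivial would generally alter $\mathrm{Art}([I'])$). It then suffices to show $\mathrm{Art}([J']) = 0$ in $\mathrm{Gal}(K/F)$. For $\mathfrak{p}$ split in $K$, $\mathrm{Frob}_\mathfrak{p}$ is trivial and the contribution vanishes. For $\mathfrak{p}$ unramified in $L/K$ but not split in $K$, the Kummer criterion for $L = K(\gamma^{1/n})$ to be unramified at each $\mathfrak{P}|\mathfrak{p}$ forces $v_\mathfrak{P}(\gamma) \equiv 0 \pmod n$; combined with the identity $n \cdot v_\mathfrak{p}(I) = v_\mathfrak{p}(\mathrm{Norm}_{K/F}(\gamma)) = f \sum_{\mathfrak{P}|\mathfrak{p}} v_\mathfrak{P}(\gamma)$, where $f = f(\mathfrak{P}/\mathfrak{p})$ is the common residue degree in $K/F$, this yields $f \mid v_\mathfrak{p}(I) = v_\mathfrak{p}(J')$. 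Since $\mathrm{Frob}_\mathfrak{p}$ has order exactly $f$ in $\mathrm{Gal}(K/F)$, the contribution $v_\mathfrak{p}(J') \cdot \mathrm{Frob}_\mathfrak{p}$ vanishes; summing gives $\mathrm{Art}([J']) = 0$, so $\mathrm{Art}([I']) = \mathrm{Art}([I]) = S(f,c)$ by (ii).

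The main technical obstacle I anticipate is in part (i): establishing the cohomological identification of $\partial(\gamma'(K^*)^n)$ with the class of the group extension $\mathrm{Gal}(L'/F) \to G$, and carefully tracking how the Kummer-generator indeterminacy (replacement by $n$-th powers in $K^*$ and by prime-to-$n$ powers) interacts with the $F^*$-twisting so as to yield the clean statement ``$\gamma' \in \gamma \cdot (K^*)^n \cdot F^*$''.
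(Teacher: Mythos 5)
Your proof is correct and follows essentially the same route as the paper, with the reasoning made more explicit in several places.

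For parts (i) and (ii), the paper's proof simply says these "follow from the arguments used in Lemmas \ref{lem:boundary} and \ref{lem:nicegamma} together with Theorem \ref{thm:fixed?}." What you have written out is essentially the content of those lemmas: the identification of $\partial(\gamma(K^*)^n)$ with the class of the group extension $1 \to \mathrm{Gal}(L/K) \to \mathrm{Gal}(L/F) \to G \to 1$, the observation that $L/F$ abelian forces $G$-invariance of $\gamma(K^*)^n$, the observation that $L/F$ cyclic of degree $n^2$ is equivalent to the boundary class being a generator of $H^2(G,\tilde\mu_n)$, and the classification via exactness of (\ref{eq:boundary}). One small stylistic divergence: for existence, you use the Hilbert 90 construction of Theorem \ref{thm:elementary} plus a direct computation that $\tilde\sigma^n(y^{1/n}) = \zeta_n y^{1/n}$ to force order $n^2$, while the paper's Lemma \ref{lem:nicegamma} instead runs an exponent argument through the maximal exponent-$n$ abelian extension $F^{(n)}$ of $F$; both establish the same thing. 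Your inductive computation is fine (the $\zeta_n^j$ factor disappears after taking the $n$-th power and what survives is $\mathrm{Norm}_{K/F}(x)=\zeta_n$), and it also implicitly rules out $y \in (K^*)^n$, which is worth noting.

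For part (iii), your valuation argument ($n\, v_\mathfrak p(I) = f\sum_{\mathfrak P\mid \mathfrak p} v_\mathfrak P(\gamma)$, combined with $n\mid v_\mathfrak P(\gamma)$ at primes over which $L/K$ is unramified) reaches the same divisibility $n_v \mid \mathrm{ord}_v(I)$ that the paper gets via the idele congruence (\ref{eq:wcongruence}); the two computations are the same fact packaged differently. Your observation about the decomposition $I = I'J'$ is a legitimate one: the formula $\mathrm{Art}([I'])=\mathrm{Art}([I])$ does require that the contribution of $J'$ (equivalently, that $\mathrm{ord}_v(I')$ at nice $v$) vanish in $\mathrm{Gal}(K/F)$, which holds for the canonical decomposition where $I'$ is supported away from the nice primes but not for an arbitrary factorization. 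The paper's proof makes the same implicit assumption, and in the applications (Corollaries \ref{cor:easy} and \ref{cor:almost}) only the canonical decomposition is used, so your reading is the intended one.
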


This description leads to the following corollaries, which we will show lead to a proof of Theorem \ref{thm:result}.

\begin{corollary}
\label{cor:easy}  Suppose $K/F$ is contained in a cyclic degree $n^2$ extension $L/F$ 
such that every prime $\mathcal{P}$ of $O_F$ which ramifies in $L$ splits completely in $K$. Then 
$S(f,c) = 0$ for all surjections $f:\pi_1(X,\eta) \to \mathrm{Gal}(K/F)  = G = \mathbb{Z}/n$
and all $c \in H^3(G,\tilde{\mu}_n)$.
\end{corollary}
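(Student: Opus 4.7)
The plan is to apply Theorem \ref{thm:intrinsic}(iii) with the trivial decomposition $I = I' \cdot J'$ in which $I' = O_F$ and $J' = I$. Since $\mathrm{Art}([O_F]) = 0$, formula (\ref{eq:neater}) will then yield $S(f, c_1 \cup c_2) = 0$ for the generator $c_2 \in H^2(G, \tilde{\mu}_n)$ produced by a Kummer element $\gamma$ with $L = K(\gamma^{1/n})$. What must be verified is the hypothesis of Theorem \ref{thm:intrinsic}(iii): every prime in $\mathrm{Supp}(J') = \mathrm{Supp}(I)$ is either split in $K/F$ or unramified in $L/K$.

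I will in fact check this for \emph{every} prime $\mathcal{P}$ of $O_F$. If $\mathcal{P}$ is not split in $K$, then the contrapositive of the hypothesis of the corollary says that $\mathcal{P}$ does not ramify in $L/F$. Since $K/F$ is everywhere unramified by assumption, multiplicativity of ramification indices in the tower $F \subset K \subset L$ forces $e(\mathfrak{P}/\mathfrak{p}) = 1$ for every prime $\mathfrak{p}$ of $K$ above $\mathcal{P}$ and every prime $\mathfrak{P}$ of $L$ above $\mathfrak{p}$, i.e.\ $L/K$ is unramified at every prime above $\mathcal{P}$. So the hypothesis of Theorem \ref{thm:intrinsic}(iii) is automatic with $I' = O_F$, $J' = I$, and (\ref{eq:neater}) yields $S(f, c_1 \cup c_2) = 0$.

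Finally I pass from the single pair $(f, c_1 \cup c_2)$ to arbitrary $(f, c)$ formally. The group $H^3(G, \tilde{\mu}_n)$ is cyclic of order $n$ generated by $c_1 \cup c_2$, so every $c$ has the form $k(c_1 \cup c_2)$ for some $k \in \mathbb{Z}/n$, and $\mathbb{Z}/n$-linearity of the pairing in (\ref{eq:pairit}) gives $S(f, c) = k \cdot S(f, c_1 \cup c_2) = 0$. Any other surjection $f':\pi_1(X,\eta) \to \mathrm{Gal}(K/F) = G$ differs from $f$ by post-composition with an automorphism of $\mathbb{Z}/n$, which only scales the Kim invariant by a unit and hence preserves vanishing. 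I do not expect any substantive obstacle: once Theorem \ref{thm:intrinsic}(iii) is used as a black box, the argument reduces to the elementary observation about ramification in a tower of fields together with the $\mathbb{Z}/n$-linearity of the cup product.
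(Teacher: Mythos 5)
Your proof is correct and follows essentially the same route as the paper, which applies Theorem~\ref{thm:intrinsic}(iii) with $I' = O_F$; you have simply spelled out the routine verification (the contrapositive/ramification-tower argument showing every prime is either split in $K$ or unramified in $L/K$, and the linearity reduction from $c_1 \cup c_2$ to arbitrary $c$ and $f$) that the paper leaves implicit.
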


\begin{corollary}
\label{cor:almost} Suppose $K/F$ is contained in a cyclic degree $n^2$ extension $L/F$
with the following properties.  There is a unique prime ideal $\mathcal{P}$ of $O_F$ which ramifies in $L/F$, $\mathcal{P}$ is undecomposed in $L$ and the inertia group of $\mathcal{P}$ in $\mathrm{Gal}(L/F)$ is $\mathrm{Gal}(L/K)$. Furthermore, the residue characteristic of $\mathcal{P}$ is prime to $n$.  Then $S(f,c)$ is of order $n$ for all surjections $f:\pi_1(X,\eta) \to \mathrm{Gal}(K/F) = G = \mathbb{Z}/n$
and all generators $c$ of $H^3(G,\tilde{\mu}_n)$.
\end{corollary}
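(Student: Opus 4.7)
The strategy is to apply Theorem~\ref{thm:intrinsic}(iii) with the hypothesized extension $L/F$ playing the role of the cyclic degree $n^2$ extension containing $K$, and to decompose the ideal $I$ of Theorem~\ref{thm:fixed?} as $\mathcal{P}^{m} \cdot J'$ with $\gcd(m,n)=1$ and $J'$ supported on primes admissible in the sense of Theorem~\ref{thm:intrinsic}(iii).

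First I will analyze how $\mathcal{P}$ splits in $K/F$. Let $\mathfrak{P}$ be the unique prime of $L$ above $\mathcal{P}$. Its decomposition group in $\mathrm{Gal}(L/F)$ is the whole group (since $\mathcal{P}$ is undecomposed), and its inertia group is $\mathrm{Gal}(L/K)$ by hypothesis. Passing to the quotient $\mathrm{Gal}(L/F)/\mathrm{Gal}(L/K) = \mathrm{Gal}(K/F)$ shows that $\mathcal{P}$ is unramified and undecomposed in $K/F$ of residue degree $n$; hence its Frobenius $\mathrm{Frob}_{\mathcal{P}} \in \mathrm{Gal}(K/F)$ is a generator. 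Let $\mathfrak{p}$ denote the unique prime of $K$ above $\mathcal{P}$.

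Next, choose a Kummer generator $\gamma \in K^*$ with $L = K(\gamma^{1/n})$ as in Theorem~\ref{thm:intrinsic}. Since $L/K$ is unramified outside $\mathfrak{p}$ and totally ramified of degree $n$ at $\mathfrak{p}$, and since $\mathfrak{p}$ has residue characteristic coprime to $n$, the standard Kummer ramification criterion gives $v_{\mathfrak{p}}(\gamma) = m$ with $\gcd(m,n)=1$ (after modifying $\gamma$ by an $n$-th power in $K^*$ if necessary), while $v_{\mathfrak{q}}(\gamma) \equiv 0 \pmod n$ at every prime $\mathfrak{q}$ of $K$ with $\mathfrak{q} \nmid n$ and $\mathfrak{q} \neq \mathfrak{p}$. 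Because $\mathcal{P} \nmid n$ and $\mathcal{P}$ is the only ramified prime in $L/F$, every prime of $F$ above $n$ is unramified in $L/F$, hence its primes in $K$ are unramified in $L/K$, and we can further arrange $v_{\mathfrak{q}}(\gamma) = 0$ at such primes by modification. Using $f_{\mathfrak{p}} = n$, one computes $v_{\mathcal{P}}(\mathrm{Norm}_{K/F}(\gamma)) = nm$, so $v_{\mathcal{P}}(I) = m$ and we may write $I = \mathcal{P}^{m} \cdot J'$ with $J'$ supported on primes of $F$ distinct from $\mathcal{P}$. Every such prime is unramified in $L/F$, so its primes in $K$ are unramified in $L/K$; thus $J'$ satisfies the hypothesis of Theorem~\ref{thm:intrinsic}(iii).

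Applying Theorem~\ref{thm:intrinsic}(iii) yields
\begin{equation*}
S(f,c) \;=\; \mathrm{Art}([\mathcal{P}^{m}]) \;=\; m \cdot \mathrm{Frob}_{\mathcal{P}} \;\in\; \mathrm{Gal}(K/F) = \mathbb{Z}/n.
\end{equation*}
Since $\mathrm{Frob}_{\mathcal{P}}$ generates $\mathrm{Gal}(K/F)$ and $\gcd(m,n) = 1$, $S(f,c)$ has order $n$ in $\mathbb{Z}/n$. Any other choice of surjection $f'$ or generator $c'$ produces a value $S(f',c')$ which is a unit multiple of $S(f,c)$ by bilinearity and naturality of the cup product, and therefore also has order $n$. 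The main technical subtlety is the treatment of primes of $K$ dividing $n$, where the Kummer ramification criterion is more delicate than at primes coprime to $n$; the hypothesis that $\mathcal{P}$ has residue characteristic prime to $n$ sidesteps this by ensuring that such primes contribute only to the admissible part $J'$ of the decomposition.
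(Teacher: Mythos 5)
Your proof is correct and follows essentially the same route as the paper: both apply Theorem \ref{thm:intrinsic}(iii) with $I' = \mathcal{P}^{m}$, where $m = v_{\mathcal{P}}(I)$ is shown to be prime to $n$ because $L/K$ is totally and tamely ramified at the unique prime over $\mathcal{P}$, and all other primes in the support of $I$ are admissible because $\mathcal{P}$ is the only prime that ramifies in $L/F$. The only surface differences are that you deduce $v_{\mathcal{P}}(I)=m$ by computing $v_{\mathcal{P}}(\mathrm{Norm}_{K/F}(\gamma))=nm$ directly, whereas the paper instead reads off $\mathrm{ord}_{\mathcal{P}}(I) \equiv \mathrm{ord}_w(\gamma) \pmod{n_v}$ from the congruence (\ref{eq:wcongruence}) of Lemma \ref{lem:nicegamma}, and that your aside about arranging $v_{\mathfrak{q}}(\gamma)=0$ at primes $\mathfrak{q}\mid n$ is neither obviously achievable (multiplying by $(K^*)^n$ only affects $v_{\mathfrak{q}}$ mod $n$) nor needed, since what you actually use, and correctly state, is that such primes are unramified in $L/K$.
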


\begin{remark}
\label{rem:niceremark} These corollaries explain the examples of \cite[\S 5.5]{KimEtAl} in the following way.
Let $n = 2$, and let $F = \mathbb{Q}(\sqrt{-pt})$ where $p$ is a prime such that $p \equiv 1$ mod $4$ and
$t$ is a positive square-free integer prime to $p$.  Let $K$ be $F(\sqrt{p})$. Then $K$ is contained in the unique cyclic
degree $4$ extension $L$ of $F$ contained in $F(\tilde{\mu}_p)$.   The unique prime $\mathcal{P}$ over $p$ in $F$ is the unique prime which ramifies in $L$.  The examples in \cite[\S 5.5]{KimEtAl} arise from Corollaries \ref{cor:easy} and \ref{cor:almost} because  
$\mathcal{P}$ splits in $K$ if and only 
    if $t$ is a square mod $p$ since $-1$ is a square mod $p$. 
\end{remark}

The following two results give examples in which our results show that Kim's invariants are trivial, where $\zeta_n\in F$ is fixed as in Definition \ref{def:roots}.

\begin{theorem}
\label{thm:Ralph1}  Suppose $n$ is a properly irregular prime in the sense that $n$ divides $\# Cl(\mathbb{Z}[\zeta_n])$
but not $\# Cl(\mathbb{Z}[\zeta_n + \zeta_n^{-1}])$. If $K$ is any cyclic unramified extension of $F  = \mathbb{Q}(\zeta_n)$
then $S(f,c) = 0$ for all surjections $f:\pi_1(X,\eta) \to \mathrm{Gal}(K/F)  = G = \mathbb{Z}/n$
and all $c \in H^3(G,\tilde{\mu}_n)$.
\end{theorem}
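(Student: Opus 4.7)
The plan is to apply Corollary~\ref{cor:elemcor2}: it suffices to prove that the ideal class $[J]$ from Theorem~\ref{thm:elementary} satisfies $\mathrm{Art}([J]) = 0$ in $\mathrm{Gal}(K/F) \cong \mathbb{Z}/n$. By Theorem~\ref{thm:fixed?}, $[J]$ is $n$-torsion, so it lies in the $n$-primary part $A$ of $Cl(O_F)$. Since $n$ is odd, the natural map $Cl(O_{F^+}) \to Cl(O_F)^+$ has kernel and cokernel of exponent dividing $2$, so the properly irregular hypothesis $n \nmid \#Cl(O_{F^+})$ forces $A^+ = 0$ and hence $A = A^-$; equivalently, complex conjugation $c \in \mathrm{Gal}(F/\mathbb{Q})$ acts on $A$ by $-1$.

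The next step is to observe that $c(K) = K$. Write $Cl(O_F) = A \oplus B$ with $B$ of order prime to $n$; the index-$n$ subgroup $H_K := \mathrm{Norm}_{K/F}(Cl(O_K))$ decomposes as $(H_K \cap A) \oplus B$, because $Cl(O_F)/H_K$ is $n$-torsion. Every subgroup of $A$ is $c$-stable (since $c$ acts on $A$ by $-1$), and $c$ preserves $B$ because it preserves the $n$-primary decomposition, so $c(H_K) = H_K$ and class field theory gives $c(K) = K$. Fix any lift $\tilde c : K \to K$ of complex conjugation. The induced action of $c$ on $\mathrm{Gal}(K/F) = A/(H_K \cap A)$ is still by $-1$, so $\tilde c\,\sigma\,\tilde c^{-1} = \sigma^{-1}$.

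Now set $\tilde y = \tilde c(y)$ and $\tilde x = \bigl(\sigma\tilde c(x)\bigr)^{-1}$. Applying $\sigma^{-1}$ to $\sigma(y)/y = x^n$ gives $\sigma^{-1}(y)/y = \sigma^{-1}(x)^{-n}$; combined with $\sigma\tilde c = \tilde c\sigma^{-1}$ this yields
\[
\sigma(\tilde y)/\tilde y \;=\; \tilde c\bigl(\sigma^{-1}(y)/y\bigr) \;=\; \bigl(\tilde c\sigma^{-1}(x)\bigr)^{-n} \;=\; \bigl(\sigma\tilde c(x)\bigr)^{-n} \;=\; \tilde x^{\,n},
\]
while $\mathrm{Norm}_{K/F}(\tilde x) = \mathrm{Norm}_{K/F}(\tilde c(x))^{-1} = \tilde c(\zeta_n)^{-1} = \zeta_n$. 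Hence $(\tilde x,\tilde y)$ is a valid input to Theorem~\ref{thm:elementary} representing the same class $c_2 \in H^2(G,\tilde{\mu}_n)$, and the associated ideal satisfies $\tilde J^{\,n} = \mathrm{Norm}_{K/F}(\tilde c(y))\,O_F = c(J)^n$, so $\tilde J = c(J)$ as fractional ideals of $F$.

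By the well-definedness statement in Theorem~\ref{thm:fixed?}, $\mathrm{Art}([c(J)]) = \mathrm{Art}([\tilde J]) = \mathrm{Art}([J])$. On the other hand, Galois equivariance of the reciprocity map, combined with the $-1$ action of $c$ on the abelian target, gives $\mathrm{Art}([c(J)]) = -\mathrm{Art}([J])$. Combining these yields $2\,\mathrm{Art}([J]) = 0$ in $\mathbb{Z}/n$, and since $n$ is odd we conclude $\mathrm{Art}([J]) = 0$, hence $S(f,c) = 0$ for every $f$ and every $c \in H^3(G,\tilde{\mu}_n)$. The main technical subtlety is the sign bookkeeping in the definition of $\tilde x$: the identity $\tilde c\sigma = \sigma^{-1}\tilde c$ — itself a consequence of $c$ acting by $-1$ on $A$ via the properly irregular hypothesis — is precisely what makes $\mathrm{Norm}_{K/F}(\tilde x) = \zeta_n$ rather than $\zeta_n^{-1}$, and so realizes $\tilde J = c(J)$ as an instance of the same boundary class $c_2$.
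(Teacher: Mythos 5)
Your argument is correct, but it is genuinely different from the paper's proof. The paper establishes Lemma~\ref{lem:into}: $K$ embeds in a $\mathbb{Z}_n$-extension of $F$ unramified outside $n$; since the prime above $n$ in $F=\mathbb{Q}(\zeta_n)$ is principal it splits in $K$, and Corollary~\ref{cor:easy} finishes the job. You instead give a self-contained sign argument built directly on Theorem~\ref{thm:elementary} and Corollary~\ref{cor:elemcor2}: proper irregularity forces complex conjugation $c$ to act by $-1$ on the $n$-part $A$ of $Cl(O_F)$, hence (via the Artin isomorphism, which is Galois-equivariant) by $-1$ on $\mathrm{Gal}(K/F)$, and in particular $c(K)=K$. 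The twist $(x,y)\mapsto(\tilde x,\tilde y)$ then exhibits $c(J)$ and $J$ as arising from the same class $c_2$, so they have the same image under $\mathrm{Art}$; on the other hand $[c(J)]=-[J]$ in $A$, and oddness of $n$ forces $\mathrm{Art}([J])=0$. Your bookkeeping checks out: from $\sigma\tilde c=\tilde c\sigma^{-1}$ one gets $\mathrm{Norm}_{K/F}(\tilde c(x))=c(\zeta_n)=\zeta_n^{-1}$, and the inverse built into $\tilde x=(\sigma\tilde c(x))^{-1}$ restores the norm to $\zeta_n$, making $(\tilde x,\tilde y)$ a valid input to Theorem~\ref{thm:elementary} for the same generator $\sigma$ and thus the same $c_2$. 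Conceptually your proof is a hands-on specialization to the subgroup $\langle c\rangle$ of the $\Delta$-equivariance reasoning the paper uses to prove Theorem~\ref{thm:Ralph2} (where $\chi=\chi^{-1}\omega$ is shown to be impossible); but Theorem~\ref{thm:Ralph2} cannot simply be cited here, since proper irregularity does not make $K/\mathbb{Q}$ Galois, so your explicit twist argument is genuinely needed. The trade-off: your route is more elementary and avoids the $\mathbb{Z}_n$-extension construction, while the paper's route makes visible the Iwasawa-theoretic structure ($K$ sits at the bottom of a $\mathbb{Z}_n$-tower unramified outside $n$) that underlies the vanishing.
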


\begin{theorem}
\label{thm:Ralph2}  
 Suppose that $n > 2$ is prime and $K/F$ is a cyclic unramified Kummer extension of degree $n$ such that both $K$ and $F$ are Galois over $\mathbb{Q}$.  Then $S(f,c) = 0$ for all surjections $f:\pi_1(X,\eta) \to \mathrm{Gal}(K/F)  = G = \mathbb{Z}/n$
and all $c \in H^3(G,\tilde{\mu}_n)$.  
\end{theorem}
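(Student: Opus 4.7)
The plan is to apply Corollary \ref{cor:elemcor2}, which reduces the theorem to showing that $\mathrm{Art}([J]) = 0$ in $G = \mathrm{Gal}(K/F)$ for the ideal class $[J]$ of Theorem \ref{thm:elementary}. The main tool is complex conjugation: because $\zeta_n \in F$ and $n > 2$, $F$ is totally imaginary, so complex conjugation $j$ is a non-trivial involution in $\Delta = \mathrm{Gal}(F/\mathbb{Q})$; because $K/\mathbb{Q}$ is Galois, $j$ lifts to complex conjugation $\tilde{j}$ on $K$, itself an involution. Let $\chi'\colon\Delta\to(\mathbb{Z}/n)^{\times}$ be the character describing the conjugation action of $\tilde{\Delta} = \mathrm{Gal}(K/\mathbb{Q})$ on its normal subgroup $G$; then $\tilde{j}^{2} = 1$ forces $\chi'(j)^{2} = 1$, so $\chi'(j) = \pm 1$ (as $n$ is an odd prime), while the cyclotomic character satisfies $\chi(j) = -1$.

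The heart of the argument is the eigenvalue identity $j[J] = -\chi'(j)[J]$ in $Cl(O_F)$. To establish it, I would apply Hilbert 90 to the norm-one element $\tilde{j}(x)\cdot x$ (noting that $\mathrm{Norm}_{K/F}(\tilde{j}(x)\cdot x) = j(\zeta_n)\cdot\zeta_n = 1$) to obtain $v\in K^{*}$ with $\tilde{j}(x) = x^{-1}\sigma(v)/v$, so that $\tilde{j}(x)^{n} = \sigma(W)/W$ for $W = y^{-1}v^{n}$. Combining with the commutation $\tilde{j}\sigma = \sigma^{\chi'(j)}\tilde{j}$ yields $\sigma^{\chi'(j)}(\tilde{j}(y))/\tilde{j}(y) = \sigma(W)/W$. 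In the case $\chi'(j) = +1$ this gives $\tilde{j}(y)/W \in F^{*}$ directly, while in the case $\chi'(j) = -1$ one uses the identity $\sigma(\tilde{j}(y)\cdot\sigma(W)) = \tilde{j}(y)\cdot\sigma(W)$ (which follows from the cocycle equation by applying $\sigma$ once) to conclude $\tilde{j}(y)\cdot\sigma(W) \in F^{*}$; in either case one obtains $\tilde{j}(y)\cdot y^{\chi'(j)} \in F^{*}\cdot (K^{*})^{n}$. Taking $\mathrm{Norm}_{K/F}$ and using that $\mathrm{Norm}_{K/F}(F^{*}\cdot (K^{*})^{n}) \subseteq (F^{*})^{n}$ gives $\mathrm{Norm}_{K/F}(\tilde{j}(y)) \equiv \mathrm{Norm}_{K/F}(y)^{-\chi'(j)} \pmod{(F^{*})^{n}}$; since fractional $O_F$-ideals form a torsion-free group, this lifts to $j(J) = J^{-\chi'(j)}\cdot(\eta)$ for some $\eta\in F^{*}$, whence $j[J] = -\chi'(j)[J]$.

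The conclusion is then a short argument using the $\Delta$-equivariance of the Artin map $\mathrm{Art}\colon Cl(O_F) \to G$, with $\Delta$ acting on $G$ through $\chi'$: on the one hand $\mathrm{Art}(j[J]) = \chi'(j)\mathrm{Art}([J])$, and on the other $\mathrm{Art}(j[J]) = -\chi'(j)\mathrm{Art}([J])$ by the eigenvalue identity above. Hence $2\chi'(j)\mathrm{Art}([J]) = 0$ in $\mathbb{Z}/n$, and since $n$ is an odd prime with $\chi'(j)\in(\mathbb{Z}/n)^{\times}$, this forces $\mathrm{Art}([J]) = 0$; Corollary \ref{cor:elemcor2} then gives $S(f,c) = 0$ for all $f$ and $c$ as in the statement. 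The hard part is the computation in the case $\chi'(j) = -1$, where the cocycle equation $\sigma^{-1}(z)/z = \sigma(W)/W$ must be solved explicitly and the non-obvious $\sigma$-invariance of $z\cdot\sigma(W)$ recognized; a more structural alternative would be to invoke the $\Delta$-equivariance of the boundary map in (\ref{eq:boundary}), where $\Delta$ acts on $H^{2}(G,\tilde{\mu}_{n}) \cong \tilde{\mu}_{n}$ through the combined character $\chi\cdot\chi'$ specializing at $j$ to $-\chi'(j)$.
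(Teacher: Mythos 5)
Your proof is correct and reaches the paper's conclusion by what is morally the same mechanism (the sign of complex conjugation on $\tilde\mu_n$ versus its square on the $G$-side), but the implementation is genuinely different in two respects. First, the paper works directly with the $\Delta$-equivariant map (\ref{eq:surprise}) of Theorem \ref{thm:fixed?}, computes the $\Delta$-action on $H^2(G,\tilde\mu_n)$ abstractly via the identification $H^2(G,\tilde\mu_n)\cong H^1(G,\mathbb{Q}/\mathbb{Z})\otimes\tilde\mu_n$, and derives a contradiction from the character identity $\omega=\chi^2$ using the fact that the Teichm\"uller character $\omega$ has even order $n-1$; you instead reduce via Corollary \ref{cor:elemcor2} to $\mathrm{Art}([J])=0$ and establish the needed eigenvalue identity $j[J]=-\chi'(j)[J]$ by an explicit Hilbert 90 manipulation with the data $(x,y,v)$ of Theorem \ref{thm:elementary}, then conclude from $2\chi'(j)\mathrm{Art}([J])=0$. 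Second, you use only the single element $j\in\Delta$ rather than the full $\Delta$-action; this is enough because $\chi'(j)^2=1$ while $\omega(j)=-1$, which is exactly the specialization at $j$ of the paper's global order obstruction. What each buys: the paper's route is shorter and more conceptual but leans on the unproved-in-the-text equivariance of (\ref{eq:surprise}); your route supplies precisely that equivariance (at $j$) by a self-contained computation, so it is more elementary and closes a gap that the paper leaves to the reader, at the cost of the somewhat fiddly case analysis on $\chi'(j)=\pm1$. Your final remark correctly identifies the paper's actual argument as the ``structural alternative,'' with your $\chi\cdot\chi'$ agreeing with the paper's $\chi^{-1}\omega$ at $j$ since $\chi'(j)^{-1}=\chi'(j)$.
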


\begin{remark}
\label{rem:notsoniceremark} Note that in Theorem \ref{thm:Ralph1},  $n$ does not divide $[F:\mathbb{Q}]$.  One can also construct many examples of Theorem \ref{thm:Ralph2} in which $[F:\mathbb{Q}]$ is prime to $n$.  However, the examples we will construct in Theorem \ref{thm:result} in which Kim's invariant is non-trivial all have $n|[F:\mathbb{Q}]$.  It would be interesting to find  examples in which Kim's invariant
is non-trivial when $n$ is prime and $[F:\mathbb{Q}]$ is not divisible  by $n$. 
\end{remark}

We now describe a  pairing in Galois cohomology that is different from the McCallum-Sharifi pairing and that gives rise to Kim's invariants.

Define 
\begin{equation}
\label{eq:tdef}
T(F) = \{a \in F^*:  F(a^{1/n})/F \quad \mathrm{is \ unramified}\}.
\end{equation}
Suppose $a \in T(F)$ and $b \in O_F^*$.  The field $K = F(a^{1/n})$ is a cyclic Kummer extension of degree $m$ dividing $n$.
Since $K/F$ is unramified, and $b \in O_F^*$, we have $b = \mathrm{Norm}_{K/F}(x)$ for some $x \in K^*$. 
Let $\sigma \in  \mathrm{Gal}(K/F)$ be the unique generator such that $\sigma(a^{1/n})/a^{1/n} = \zeta_m = \zeta_n^{n/m}$, where $\zeta_n\in F$ is as in Definition \ref{def:roots}.  
Since $\mathrm{Norm}_{K/F}(x^m/b) = 1$, there is an element $\nu \in K^*$ such that
$x^m/b = \sigma(\nu)/\nu$. Since $K/F$ is unramified and $b \in O_F^*$, the ideal $\mathrm{Norm}_{K/F}(\nu a^{1/n}) O_F$ equals $I^m$ for some fractional ideal $I$ of $O_F$.  We define
\begin{equation}
\label{eq:defined}
(a,b)_n = [I] \otimes \zeta_m \in Cl(O_F) \otimes_\mathbb{Z} \tilde{\mu}_n
\end{equation}
where $[I]$ is the ideal class of $I$ in $Cl(O_F)$.  The value $(a,b)_n$ does not depend on the choice of $\zeta_n$ 
in Definition \ref{def:roots}.

\begin{theorem} 
\label{thm:fixitup}
Suppose $K = F(a^{1/n})$ has degree $n  = m$ over $F$ for some $a \in T(F)$.  Let  $\sigma \in G$ be the generator such that 
$\sigma(a^{1/n})/a^{1/n} = \zeta_n$ and set $b = \zeta_n$.
Fix an isomorphism $c_1: G = \mathrm{Gal}(K/F) \to \mathbb{Z}/n$ by letting $\sigma \in G$ correspond to $1 \in  \mathbb{Z}/n$. 
Then as above, $b = \mathrm{Norm}_{K/F}(x)$ for some $x \in K^*$ and $x^n/b = x^n/\zeta_n = \sigma(\nu)/\nu $ for some  $\nu \in K^*$. When $y = \nu a^{1/n}$, the coset $y(K^*)^n$ lies in $(K^*/(K^*)^n)^G$, and its image under the homomorphism in (\ref{eq:boundary})
is a generator $c_2 \in H^2(G,\tilde{\mu}_n)$.  Let $c = c_1 \cup c_2 \in H^3(G,\tilde{\mu}_n)$. There is a unique homomorphism $\kappa: Cl(O_F) \otimes_\mathbb{Z} \tilde{\mu}_n \to \mathbb{Z}/n
$ which 
sends $[J]\otimes \zeta_n$ to $\mathrm{Art}([J]) \in G = \mathbb{Z}/n$ for all fractional ideals $J$ of $O_F$.  
Kim's invariant $S(f,c)$ is given by 
\begin{equation}
\label{eq:Sfcor}
S(f,c) = \kappa( (a,\zeta_n)_n)
\end{equation}
when $(a,\zeta_n)_n$ is the pairing defined by (\ref{eq:defined}). 
\end{theorem}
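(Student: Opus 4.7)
The plan is to identify $y = \nu a^{1/n}$ with the element $\gamma$ of Theorem \ref{thm:fixed?} applied to our $K/F$ and the class $c_2$, and then to observe that the fractional ideal produced by that theorem is literally the same ideal $I$ appearing in the definition (\ref{eq:defined}) of the pairing. Once these identifications are in place, the formula $S(f,c)=\mathrm{Art}([I])$ from Theorem \ref{thm:fixed?} translates directly into $S(f,c)=\kappa((a,\zeta_n)_n)$ via the definition of $\kappa$.

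First I would verify that $y(K^*)^n$ is $G$-fixed. Since $\sigma(\nu)/\nu = x^n/\zeta_n$ and $\sigma(a^{1/n})/a^{1/n}=\zeta_n$, multiplying gives
\begin{equation*}
\sigma(y)/y \;=\; (x^n/\zeta_n)\cdot\zeta_n \;=\; x^n \in (K^*)^n,
\end{equation*}
so $y(K^*)^n\in (K^*/(K^*)^n)^G$, and the boundary in (\ref{eq:boundary}) produces $c_2\in H^2(G,\tilde{\mu}_n)$.

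Next I would check that $c_2$ generates $H^2(G,\tilde{\mu}_n)\cong\mathbb{Z}/n$, which under the central extension interpretation is equivalent to $L:=K(y^{1/n})$ being cyclic of degree $n^2$ over $F$. Setting $\alpha=y^{1/n}$, choose a lift $\tilde{\sigma}\in\mathrm{Gal}(L/F)$ of $\sigma$ with $\tilde{\sigma}(\alpha)=x\alpha$; this is consistent because $\tilde{\sigma}(\alpha)^n=x^n y=\sigma(y)$. An easy induction then yields
\begin{equation*}
\tilde{\sigma}^n(\alpha) \;=\; \prod_{i=0}^{n-1}\sigma^i(x)\cdot\alpha \;=\; \mathrm{Norm}_{K/F}(x)\cdot\alpha \;=\; \zeta_n\,\alpha,
\end{equation*}
so $\tilde{\sigma}$ has exact order $n^2$ and $\mathrm{Gal}(L/F)$ is cyclic of order $n^2$. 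Under the classification of central extensions of $\mathbb{Z}/n$ by $\mathbb{Z}/n$, cyclicity of the middle group is precisely the statement that the corresponding $H^2$-class is a generator; equivalently, this is Theorem \ref{thm:intrinsic}(i,ii) applied to $L$.

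Finally I would apply Theorem \ref{thm:fixed?} with $\gamma=y$ to obtain a fractional $O_F$-ideal $I$ with $\mathrm{Norm}_{K/F}(y)O_F=I^n$ and $S(f,c)=\mathrm{Art}([I])$. Since $\mathrm{Norm}_{K/F}(y)=\mathrm{Norm}_{K/F}(\nu a^{1/n})$, this ideal is exactly the one used in (\ref{eq:defined}), so $(a,\zeta_n)_n=[I]\otimes\zeta_n$. The map $\kappa$ is well defined because $\mathrm{Art}:Cl(O_F)\to G=\mathbb{Z}/n$ kills $n\cdot Cl(O_F)$, so the bilinear rule $(J,\zeta_n^k)\mapsto k\cdot\mathrm{Art}([J])$ descends to the tensor product and gives $\kappa([I]\otimes\zeta_n)=\mathrm{Art}([I])=S(f,c)$. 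The main obstacle is checking that $c_2$ is a \emph{generator} of $H^2$ rather than merely nonzero: the identity $\tilde{\sigma}^n(\alpha)=\zeta_n\alpha$ is the decisive point and depends essentially on the specific choice $b=\zeta_n$ in the construction.
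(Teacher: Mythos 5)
Your proof is correct and establishes the key facts, but the route to showing $c_2$ is a generator of $H^2(G,\tilde{\mu}_n)$ differs from the paper's. The paper (in the combined proof of Theorems \ref{thm:elementary} and \ref{thm:fixitup}) works entirely in Tate cohomology: it identifies $\delta_0(y(K^*)^n) \in H^1(G,(K^*)^n)$ with the cup product $[x^n]\cup t$ of the class $[x^n]\in \hat H^{-1}(G,(K^*)^n)$ (where $x^n$ has norm $1$) with a generator $t$ of $H^2(G,\mathbb{Z})$, then uses compatibility of boundary maps with cupping by $t$ and the fact that $[x^n]$ maps to the order-$n$ element $\zeta_n=\mathrm{Norm}_{K/F}(x)$ in $\hat H^0(G,\tilde{\mu}_n)$. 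You instead make the central extension $1\to\mathrm{Gal}(L/K)\to\mathrm{Gal}(L/F)\to G\to 1$ explicit via $L=K(y^{1/n})$ and verify directly that $\tilde{\sigma}^n(\alpha)=\zeta_n\alpha$, so $\mathrm{Gal}(L/F)$ is cyclic of order $n^2$; this is essentially the argument from Lemma \ref{lem:nicegamma}, run in the opposite direction. Both are valid: the paper's computation is more cohomologically efficient, while yours is more concrete and makes the role of the choice $b=\zeta_n$ visible. One small point of care: you should note that the lift $\tilde{\sigma}$ with $\tilde{\sigma}(\alpha)=x\alpha$ exists because $L/F$ is Galois (as $y(K^*)^n$ is $G$-fixed) and any lift sends $\alpha$ to $\zeta_n^j x\alpha$ for some $j$ — and in fact $\tilde{\sigma}^n(\alpha)=\zeta_n\alpha$ for every such lift since $\zeta_n^{nj}=1$, so the identity you use holds without needing to normalize $j=0$. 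The final translation via Theorem \ref{thm:fixed?} and the definition of $\kappa$ matches the paper.
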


By contrast, the McCallum-Sharifi pairing is defined in the following way.  Let $S$ be the union of set of places of $F$ which have residue characteristics dividing $n$ with the real places.  Let $C_{F,S}$ be the $S$-class group of $F$.  
In \cite[\S 2]{McS} McCallum and Sharifi define a pairing 
\begin{equation}
\label{eq:MS}
\langle \ , \ \rangle_S : T(F) \times O_F^* \to (C_{F,S}/nC_{F,S}) \otimes \tilde{\mu}_n.
\end{equation}
See also \cite{Shar} for further discussion.

\begin{remark}
\label{ex:Kimex} 
 Here is an example for which the following three statements hold:
\begin{enumerate}
\item[i.] Kim's invariant $S(f,c)$ in (\ref{eq:Sfcor}) is not trivial.
\item[ii.]  The McCallum-Sharifi pairing value $\langle a,\zeta_n\rangle_S$ in (\ref{eq:MS}) is trivial.
\item[iii.]  The homomorphism $Cl(O_F) \to C_{F,S}$ is an isomorphism.
\end{enumerate}
Let $n = 2$, $G = \mathbb{Z}/2$ and $\zeta_n = -1$. Define $F = \mathrm{Q}(\sqrt{-p t})$ for some prime $p \equiv 1$ mod 
$4$ and some square-free $t > 0$ such that $t$ is not a square mod $p$ and $-pt \equiv 5$ mod $8$.  When $K = F(\sqrt{p})$ and $a = p$, Remark \ref{rem:niceremark} shows (i).  Since $2$ is inert to $F$, (iii) holds when $S$ is the set of places of $F$ over $2$.
Finally (ii) follows from the formula in \cite[Thm. 2.4]{McS} since $\zeta_2 = -1 = \mathrm{Norm}_{K/F}(\epsilon) $ when 
$\epsilon$ is a fundamental unit of $\mathbb{Q}(\sqrt{p})\subset K$.
\end{remark}

 \medbreak
 
\noindent {\bf Acknowledgements.} 
We would like to thank the authors of \cite{KimEtAl} for sending us a preprint of their work, which led to our correcting some errors in an earlier version of this paper.
We would also like to thank Romyar Sharifi and Roland van der Veen for many very helpful conversations and suggestions about this work.   After this paper was written, Theorem \ref{thm:fixed?} as well as other pairings related to Kim's invariant have been  investigated further in \cite{KimEtAl2}.  The authors would like to thank the referee for very helpful comments. 

\medbreak

\section{Proof of Theorem \ref{thm:bigdiagram}}
\label{s:prooflast}
\setcounter{equation}{0}

We assume in this section the notations of Theorem \ref{thm:bigdiagram}.  We will
use the results of Swan in \cite{Swan} concerning cup products.  In \cite[\S3]{Swan}, Swan considers cup products of covariant left exact functors.
This can be used to define the cup product pairings in the middle and bottom rows of (\ref{eq:bowow}).
Namely, in the category of sheaves in the \'etale  topology on $X$, let
$$0 \to U \to I^0 \to I^1 \to I^2\to I^3\to \cdots$$
and 
$$0 \to V \to J^0 \to J^1 \to J^2\to J^3\to \cdots$$
be pure injective resolutions. Then the total complex $I^\bullet\otimes J^\bullet$ is a pure, but not necessarily injective, resolution of $U\otimes V$. Let
$$0 \to U\otimes V\to K^0 \to K^1 \to K^2\to K^3\to \cdots$$
be a pure injective resolution, and choose a morphism of resolutions $I^\bullet\otimes J^\bullet\to K^\bullet$ over $U\otimes V$. 
For \'etale sheaves $A,B$ on $X$, the composition of morphisms 
\begin{equation}
\label{eq:cupproductmaps}
\mathrm{Hom}_X(A,I^\bullet)\otimes \mathrm{Hom}_X(B,J^\bullet)\to \mathrm{Hom}_X(A\otimes B,I^\bullet\otimes J^\bullet) \to \mathrm{Hom}_X(A\otimes B,K^\bullet)
\end{equation}
then induces a cup product pairing
$$\mathrm{Ext}^i_X(A,U)\times \mathrm{Ext}^j_X(B,V) \to \mathrm{Ext}^{i+j}_X(A\otimes B, U\otimes V)$$
(see \cite[Thm. 3.4, Lemma 3.6 and \S7]{Swan}). Given morphisms of \'etale sheaves $C\to A\otimes B$ and $U\otimes V\to T$, we get
\begin{equation}
\label{eq:cupwithpureinjective}
\mathrm{Ext}^i_X(A,U)\times \mathrm{Ext}^j_X(B,V) \to \mathrm{Ext}^{i+j}_X(C,T).
\end{equation}

The first statement in Theorem \ref{thm:bigdiagram} is that the cup products in the
middle and bottom rows of (\ref{eq:bowow}) are compatible with the vertical homomorphisms
from the terms of the middle row to the terms of the bottom row.  The latter homomorphisms
are those associated to the natural morphism $\mathbb{Z} \to \mathbb{Z}/n$
of \'etale sheaves on $X$, since $H^i(X,\mu_n) = \mathrm{Ext}^i_X(\mathbb{Z},\mu_n)$
and the terms of the middle row have the form $\mathrm{Ext}^i_X(\mathbb{Z}/n,\mu_n)$.
So the above compatibility of the middle and bottom rows follows from the naturality of cup product (\ref{eq:cupwithpureinjective}) with respect 
to morphisms of the arguments. Note that in showing this, we have not used any compatibility of cup products 
with boundary maps; the latter requires more hypotheses.

We now turn to analyzing the connection of the cup product pairing (\ref{eq:natural})
$$H^1(X,\mu_n) \times H^1(X,\mu_n) \to H^2(X,\mu_n^{\otimes 2})$$
with the diagram (\ref{eq:bowow}).  We are to prove that this is compatible with 
pulling back and pushing forward arguments to the second row of (\ref{eq:bowow}).

By the naturality of cup product pairings with respect to either argument, we have a commuting diagram of pairings
\begin{equation}
\label{eq:bowow2}
\xymatrix @C=.2pc{
 \mathrm{Ext}^1_X(\mathbb{Z}/n,\mu_n)\ar[d] & \times&H^1(X,\mu_n)\ar@{=}[d]&\ar[rrr] &&&& \mathrm{Ext}^2_X(\mathbb{Z}/n,\mu_n^{\otimes 2})\ar[d]\\
 H^1(X,\mu_n) &\times &H^1(X,\mu_n)&\ar[rrr] &&&& H^2(X,\mu_n^{\otimes 2})
}
\end{equation}
in which the left and right vertical homomorphisms are induced by the canonical surjection $\mathbb{Z} \to \mathbb{Z}/n$.
We claim that the top row of this diagram fits into a diagram of pairings
\begin{equation}
\label{eq:bowow3}
\xymatrix @C=.2pc{
 \mathrm{Ext}^1_X(\mathbb{Z}/n,\mu_n)\ar@{=}[d] & \times&H^1(X,\mu_n)\ar[d]&\ar[rrr] &&&& \mathrm{Ext}^2_X(\mathbb{Z}/n,\mu_n^{\otimes 2})\ar[d]\\
 \mathrm{Ext}^1_X(\mathbb{Z}/n,\mu_n) & \times&\mathrm{Ext}^2_X(\mathbb{Z}/n,\mu_n)&\ar[rrr] &&&&
\mathrm{Ext}^3_X(\mathbb{Z}/n,\mu_n^{\otimes 2})
}
\end{equation}
that commutes up to the sign $(-1)$ and 
in which the  middle vertical map is the boundary map resulting from the sequence
\begin{equation}
\label{eq:Bseq}
B_\bullet = (0 \to B'' \to B \to B' \to 0)  = ( 0 \to \mathbb{Z} \xrightarrow{\cdot n} \mathbb{Z} \to \mathbb{Z}/n\to 0)
\end{equation}
and the right vertical map is the boundary map  associated with the Bockstein sequence
\begin{equation}
\label{eq:bock}
C_\bullet = (0 \to C'' \to C \to C' \to 0) = (0  \to \mathbb{Z}/n \to \mathbb{Z}/n^2 \to \mathbb{Z}/n \to 0).
\end{equation}
Let $A = \mathbb{Z}/n$.  We have a morphism $C_\bullet \to A \otimes B_\bullet$ fitting into a commutative diagram 
\begin{equation}
\label{eq:bowow4}
\xymatrix {
&\mathbb{Z}/n \ar[r]^{0}\ar@{=}[d] &\mathbb{Z}/n\ar[r]^{1} \ar@{=}[d]&\mathbb{Z}/n\ar[r]\ar@{=}[d]&0\;\\
&A \otimes B'' \ar[r]&A \otimes B\ar[r]&A \otimes B'\ar[r]&0\;\\
0\ar[r]&C'' \ar[r]\ar[u]\ar@{=}[d] &C\ar[r]\ar[u] \ar@{=}[d]&C'\ar[r]\ar[u]\ar@{=}[d]&0\;\\
0\ar[r]&\mathbb{Z}/n \ar[r] &\mathbb{Z}/n^2\ar[r] &\mathbb{Z}/n\ar[r]&0.
}
\end{equation}
Choosing pure injective resolutions $\mu_n\to I^\bullet$ and $\mu_n^{\otimes 2}\to K^\bullet$ and a morphism of resolutions 
$I^\bullet\otimes I^\bullet\to K^\bullet$ over $\mu_n^{\otimes 2}$, we can apply the respective Hom functors over $X$
to the diagram (\ref{eq:bowow4}) to obtain a commutative diagram 
\begin{footnotesize}
$$\xymatrix @C=1.2pc {
0&\mathrm{Hom}_X(A,I^\bullet)\otimes \mathrm{Hom}_X(B'',I^\bullet)\ar[l]\ar[d] &\mathrm{Hom}_X(A,I^\bullet)\otimes \mathrm{Hom}_X(B,I^\bullet)\ar[l]\ar[d]&
\mathrm{Hom}_X(A,I^\bullet)\otimes \mathrm{Hom}_X(B',I^\bullet)\ar[l]\ar[d]&\\
&\mathrm{Hom}_X(A\otimes B'',I^\bullet\otimes I^\bullet)\ar[d] & \mathrm{Hom}_X(A\otimes B,I^\bullet\otimes I^\bullet)\ar[d]\ar[l]&
\mathrm{Hom}_X(A\otimes B',I^\bullet\otimes I^\bullet)\ar[d]\ar[l]&0\;\ar[l]\\
0& \mathrm{Hom}_X(C'',K^\bullet) \ar[l]&\mathrm{Hom}_X(C,K^\bullet)\ar[l] & \mathrm{Hom}_X(C',K^\bullet)\ar[l]&\ar[l] 0.
}$$
\end{footnotesize}
It follows from \cite[Lemma 3.2]{Swan} that the diagram (\ref{eq:bowow3}) commutes up to the sign $(-1)$.

In view of diagrams (\ref{eq:bowow2}) and (\ref{eq:bowow4}), the last assertion (\ref{eq:cup}) of
 Theorem \ref{thm:bigdiagram}  concerning the relation of (\ref{eq:natural}) to
 the pairing in the middle row of (\ref{eq:bowow3}) will hold if we can show the following
 assertion.  We claim that the 
rightmost  vertical homomorphism
 $$\lambda: \mathrm{Ext}^2_X(\mathbb{Z}/n,\mu_n^{\otimes 2})\to \mathrm{Ext}^3_X(\mathbb{Z}/n,\mu_n^{\otimes 2})$$
in (\ref{eq:bowow3}), which is  induced by the boundary map of the Bockstein sequence 
$C_\bullet$ in (\ref{eq:bock}), is the composition of the pullback map 
 $$\tau: \mathrm{Ext}^2_X(\mathbb{Z}/n,\mu_n^{\otimes 2})\to H^2(X,\mu_n^{\otimes 2})$$
 associated to $\mathbb{Z} \to \mathbb{Z}/n$ with the boundary map 
 $$\nu: H^2(X,\mu_n^{\otimes 2})\to \mathrm{Ext}^3_X(\mathbb{Z}/n,\mu_n^{\otimes 2})$$
 associated to the sequence $B_\bullet$ in (\ref{eq:Bseq}).
 
 This assertion (and the more general fact, which holds in all degrees) can be proved
 by calculating $\lambda$ and $\nu \circ \tau$ using a pure injective resolution
 of the second argument, which in this case is $\mu_n^{\otimes 2}$.  To be explicit, let 
 $$0\to \mu_n^{\otimes 2}\to K^0\to K^1\to K^2\to K^3\to\cdots$$ 
 be a pure injective resolution. The boundary map $\lambda$
 results from taking elements of $\mathrm{Hom}_X(\mathbb{Z}/n,K^2)$ which go
 to zero in $K^3$, lifting these to elements of $\mathrm{Hom}_X(\mathbb{Z}/n^2,K^2)$
 by the injectivity of $K^2$, and then pushing this lift forward by $K^2 \to K^3$ to produce an element of $\mathrm{Hom}_X(\mathbb{Z}/n,K^3)$.  The map $\tau$ results from simply inflating a homomorphism
 in $\mathrm{Hom}_X(\mathbb{Z}/n,K^2)$ to one in $\mathrm{Hom}_X(\mathbb{Z},K^2)$ via the natural surjection $\mathbb{Z} \to \mathbb{Z}/n$.
 The map $\nu$ results from lifting maps
 from $\mathrm{Hom}_X(\mathbb{Z},K^2)$ to $\mathrm{Hom}_X(\mathbb{Z},K^2)$ through the multiplication by $n$ homomorphism $\mathbb{Z} \xrightarrow{\cdot n} \mathbb{Z}$ and then pushing the lift forward by $K^2 \to K^3$ to produce an element of $\mathrm{Hom}_X(\mathbb{Z}/n, K^3)$.  Since we can use 
 the lifts involved in calculating $\lambda$ to do the calculations to find $\nu$ on maps which come from the inflation map $\tau$, we see that $\lambda = \nu\circ \tau$.

\section{A reformulation of the approach via Artin maps} 
\label{s:reformulate}
\setcounter{equation}{0}

We describe in this section our approach to proving Theorem \ref{thm:fixed?}.  Instead of the diagram of pairings (\ref{eq:cuppairnew}), we consider the
diagram of pairings 
\begin{equation}
\label{eq:cuppair}
\xymatrix @C=.2pc{
H^1(G,\tilde{\mu}_n)\ar[d]^{f^*_X} &\times &H^2(G,\tilde{\mu}_n)\ar[d]^{f^*_X} &\ar[rrr]&&&& H^3(G,\tilde{\mu}_n^{\otimes 2})&  = &H^3(G,\tilde{\mu}_n) \otimes \tilde{\mu}_n\ar[d]^{f^*_X}\\
 H^1(X,\mu_n) &\times &H^2(X,\mu_n) &\ar[rrr]&&&& H^3(X,\mu_n^{\otimes 2})& = & H^3(X,\mu_n) \otimes \tilde{\mu}_n = \tilde{\mu}_n}
\end{equation}
in which the vertical homomorphisms are induced by $f$.  Let   $\phi:\mathbb{Z}/n \to \tilde{\mu}_n$ be the isomorphism taking $1$ mod $n$ to $\zeta_n$, where $\zeta_n\in F$ is as in Definition \ref{def:roots}.
Then $\phi$ takes the generator $c_1$ of $H^1(G,\mathbb{Z}/n)$ to a generator $d_1 = \phi(c_1)$ of $H^1(G,\tilde{\mu}_n)$.We have
\begin{equation}
\label{eq:goal}
\phi(f^*_X(c)) = \phi(f^*_X(c_1) \cup f^*_X(c_2)) = f^*_X(\phi(c_1)) \cup f^*_X(c_2) = f^*_X(d_1) \cup f_X^*(c_2).
\end{equation}
We will show (\ref{eq:calculation}) of Theorem \ref{thm:fixed?} by calculating
the cup product of $f_X^*(d_1)$ and $f_X^*(c_2)$  using Mazur's description in \cite{Mazur} of
the bottom row of (\ref{eq:cuppair}).  

\section{Analysis of $f_X^*(d_1)$}
\label{s:analysis}
\setcounter{equation}{0}

\begin{lemma}
\label{lem:first} There is a canonical isomorphism $$H^1(X,\mu_n) = \mathrm{Hom}(\mathrm{Pic}(X),\tilde{\mu}_n).$$  The restriction of a class $d \in H^1(X,\mu_n)$ to $H^1(\mathrm{Spec}(F),\mu_n)$ defines a torsor $Y(d)$ for the group scheme $\schememu$ over $\mathrm{Spec}(F)$.  
The scheme $Y(d)$ is isomorphic to $\mathrm{Spec}(\frac{F[w]}{(w^n - \xi)})$
as a $\schememu$ torsor for an element $\xi \in F^*$ which is unique up to multiplication by an element of $(F^*)^n$.
\end{lemma}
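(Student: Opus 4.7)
The proof plan has three parts addressing the three assertions of the lemma in order. For the canonical isomorphism $H^1(X,\mu_n) \cong \mathrm{Hom}(\mathrm{Pic}(X),\tilde\mu_n)$, the strategy is to chain together four canonical isomorphisms. First, (\ref{eq:tensoriso}) yields $H^1(X,\mu_n) \cong H^1(X,\mathbb{Z}/n) \otimes \tilde\mu_n$. Next, the defining property of the \'etale fundamental group gives $H^1(X,\mathbb{Z}/n) = \mathrm{Hom}(\pi_1^{\mathrm{ab}}(X),\mathbb{Z}/n)$. Unramified class field theory then identifies $\pi_1^{\mathrm{ab}}(X)$ with $\mathrm{Pic}(X)$. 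Finally, since $\mathrm{Pic}(X)/n$ is finite and $\tilde\mu_n$ is $n$-torsion, the natural map $\mathrm{Hom}(\mathrm{Pic}(X),\mathbb{Z}/n) \otimes \tilde\mu_n \to \mathrm{Hom}(\mathrm{Pic}(X),\tilde\mu_n)$ sending $\phi \otimes \zeta$ to the homomorphism $c \mapsto \zeta^{\phi(c)}$ is an isomorphism.

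For the second assertion, the group $H^1(X,\mu_n)$ canonically parametrizes isomorphism classes of $\mu_n$-torsors on $X$, so restriction of $d$ along the generic point inclusion $\mathrm{Spec}(F) \hookrightarrow X$ corresponds to pullback of torsors, producing the $\mu_n$-torsor $Y(d)$ over $\mathrm{Spec}(F)$. For the third assertion, I appeal to classical Kummer theory over $F$: because $\tilde\mu_n \subset F^*$, the assignment
\begin{equation*}
\xi(F^*)^n \longmapsto \mathrm{Spec}\bigl(F[w]/(w^n - \xi)\bigr), \qquad \zeta \cdot w = \zeta w,
\end{equation*}
is a bijection between $F^*/(F^*)^n$ and the set of isomorphism classes of $\mu_n$-torsors over $\mathrm{Spec}(F)$. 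Hence $Y(d)$ is of the claimed Kummer form, and $\xi$ is determined uniquely modulo $(F^*)^n$.

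The main obstacle is the first part: one has to pin down the convention for $\mathrm{Pic}(X)$ compatible with the chosen version of unramified class field theory so that the stated identification with $\pi_1^{\mathrm{ab}}(X)$ is truly canonical, and to check that this Hom-description agrees with what one would obtain directly from the Kummer exact sequence (which a priori only presents $H^1(X,\mu_n)$ as an extension of $\mathrm{Pic}(X)[n]$ by $O_F^*/(O_F^*)^n$). Once this is in place, the second and third assertions are immediate from the torsor interpretation of \'etale $H^1$ and classical Kummer theory.
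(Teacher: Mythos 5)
Your proposal is correct and follows essentially the same route as the paper: identify $H^1(X,\mathbb{Z}/n)$ with $\mathrm{Hom}(\pi_1(X),\mathbb{Z}/n)$ via the torsor interpretation, use unramified class field theory to replace $\pi_1(X)^{\mathrm{ab}}$ by $\mathrm{Pic}(X)$, tensor with $\tilde{\mu}_n$ to remove the dependence on $\zeta_n$, and invoke classical Kummer theory over $F$ for the torsor $Y(d)$. The only cosmetic difference is that you invoke (\ref{eq:tensoriso}) directly for the first step, whereas the paper first notes the sheaf isomorphism $\mathbb{Z}/n\cong\mu_n$ determined by $\zeta_n$ and then observes the resulting identification is $\zeta_n$-independent after tensoring.
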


\begin{proof} Our choice of a primitive $n^{th}$ root of unity $\zeta_n$ in $F$ gives an isomorphism of \'etale sheaves from $\mathbb{Z}/n$ to $\mu_n$.  This induces an isomorphism from $H^1(X,\mu_n)$ to $H^1(X,\mathbb{Z}/n)$.  The group
$H^1(X,\mathbb{Z}/n)$ classifies torsors for the constant group scheme $\mathbb{Z}/n$.  Therefore
$$H^1(X,\mathbb{Z}/n) = \mathrm{Hom}(\pi_1(X),\mathbb{Z}/n) = \mathrm{Hom}(\mathrm{Pic}(X),\mathbb{Z}/n)$$ where the last isomorphism results from class field theory.  Thus 
\begin{eqnarray*}
H^1(X,\mu_n) &=& H^1(X,\mathbb{Z}/n) \otimes_{\mathbb{Z}} \tilde{\mu}_n \\
&=& \mathrm{Hom}(\mathrm{Pic}(X),\mathbb{Z}/n)  \otimes_{\mathbb{Z}} \tilde{\mu}_n \\
&=& \mathrm{Hom}(\mathrm{Pic}(X),\tilde{\mu}_n)
\end{eqnarray*}
and  the isomorphism between the far left and far right terms does not depend on the choice of $\zeta_n$. The last statement is clear from Kummer theory over fields of characteristic $0$;  see \cite[p. 125, Thm. 3.9]{Milne}.
\end{proof}

 \begin{remark}
 \label{rem:Kummer2}
 Suppose the class $d\in H^1(X,\mu_n)$ has order $n$. Then $Y(d) = \mathrm{Spec}(K)$ for an everywhere unramified $\mathbb{Z}/n$ extension $K = F(\xi^{1/n})$ of $F$ 
 for an element $\xi \in F^*$ as in Lemma \ref{lem:first}.  Associating $d$ canonically to 
 a homomorphism $d:\mathrm{Pic}(X) \to \tilde{\mu}_n$ as in Lemma \ref{lem:first}, the element $\xi$ has the property
 that 
 \begin{equation}
 \label{eq:normalize}
 \frac{\mathrm{Art}(a)(\xi^{1/n})}{\xi^{1/n} } = d(a)\quad \mathrm{for \ all}\quad a \in \mathrm{Pic}(X)
 \end{equation}
where
 $\mathrm{Art}(a) \in \mathrm{Gal}(K/F)$ is the image of $a \in \mathrm{Pic}(X) $ under the Artin map.  The equality (\ref{eq:normalize}) does not depend on the choice of $n^{th}$ root $\xi^{1/n}$ of $\xi$ in $K$.  It specifies the class of $\xi$ uniquely
 in the quotient group $F^*/(F^*)^n$.
 \end{remark}

\begin{lemma}
\label{lem:H2picture}
The Pontryagin dual $H^2(X,\mathbb{Z}/n)^\star = \mathrm{Hom}(H^2(X,\mathbb{Z}/n), \mathbb{Q}/\mathbb{Z})$ of $H^2(X,\mathbb{Z}/n)$ lies in an exact sequence
\begin{equation} 
\label{eq:bound}
1 \to O_F^*/(O_F^*)^n  \xrightarrow{\;\tau\;} H^2(X,\mathbb{Z}/n)^\star \xrightarrow{\;\delta\;} \mathrm{Pic}(X)[n] \to 0
\end{equation}
in which $\mathrm{Pic}(X)[n]$ is the $n$-torsion in $\mathrm{Pic}(X)$.  Define
$T$ to be the subgroup of $\gamma \in F^*$ such that $\gamma O_F$ is the $n^{th}$
power of some fractional ideal $I(\gamma)$.  Then there is a canonical isomorphism
\begin{equation}
\label{eq:precise}
T/(F^*)^n =  H^2(X,\mathbb{Z}/n)^\star 
\end{equation}
with the following properties. 
\begin{enumerate}
\item[i.] The homomorphisms $\tau$ and $\delta$ in
(\ref{eq:bound}) are induced by the inclusion $O_F^* \subset T$ and the map which sends
$\gamma \in T$ to the ideal class $[I(\gamma)]$ of $I(\gamma)$.  
\item[ii.] The 
homomorphism $h:H^1(X,\mu_n) \to H^2(X,\mathbb{Z}/n)^\star$ induced by the cup product pairing
$$H^1(X,\mu_n) \times H^2(X,\mathbb{Z}/n) \to  H^3(X,\mu_n) =  \mathbb{Z}/ n\mathbb{Z}$$
has the following description.  Suppose $d \in H^1(X,\mu_n)$
gives a $\schememu$ torsor $Y(d)$ over $\mathrm{Spec}(F)$ as in Lemma \ref{lem:first}. Let $\xi \in F^*$ be associated to 
$Y(d)$ as in Lemma \ref{lem:first}, so that $\xi$ is unique up to multiplication
by an element of $(F^*)^n$.  Then $\xi \in T$, and $h(d)$ is the coset $\xi (F^*)^n$ in
$T/(F^*)^n = H^2(X,\mathbb{Z}/n)^\star$.
\end{enumerate}
\end{lemma}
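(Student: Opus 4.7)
The plan is to construct the isomorphism (\ref{eq:precise}) as a composition
$$T/(F^*)^n \;\xrightarrow{\sim}\; H^1(X,\mu_n) \;\xrightarrow{h}\; H^2(X,\mathbb{Z}/n)^\star,$$
where the first arrow is a Kummer-theoretic bijection built from Lemma \ref{lem:first} and the second is the map induced by the cup product, which by Artin--Verdier duality is an isomorphism. The exact sequence (\ref{eq:bound}) and the descriptions (i), (ii) then follow from the explicit construction, with (ii) being essentially tautological.

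To build the first arrow, I would take a class $d \in H^1(X,\mu_n)$ and the associated $\xi = \xi(d) \in F^*/(F^*)^n$ from Lemma \ref{lem:first}, then show $\xi \in T$ by a local argument: at each prime $\mathfrak{p}$ of $O_F$, the torsor $Y(d)$ restricts to a flat $\schememu$-torsor on $\mathrm{Spec}(O_{F,\mathfrak{p}})$; writing $v_\mathfrak{p}(\xi) = nm + r$ with $0 \le r < n$ and substituting $w \mapsto w/\pi_\mathfrak{p}^m$ for a uniformizer $\pi_\mathfrak{p}$ brings the defining equation to $w^n = u\pi_\mathfrak{p}^r$, and extendibility forces $r = 0$, i.e.\ $v_\mathfrak{p}(\xi) \equiv 0 \pmod n$. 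Conversely, any $\xi \in T$ defines a flat $\schememu$-torsor on $X$ by the same local normalization, whose class in $H^1_{\mathrm{fppf}}(X,\schememu) = H^1(X,\mu_n)$ realizes the converse direction. The bijection $T/(F^*)^n \leftrightarrow H^1(X,\mu_n)$ follows, and by construction $h(d)$ corresponds to the coset $\xi(d)(F^*)^n$, which gives (ii).

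For the exact sequence (\ref{eq:bound}) with the maps of (i), I would argue directly on $T/(F^*)^n$: the assignment $\gamma \mapsto [I(\gamma)]$ is a surjection $T \twoheadrightarrow \mathrm{Pic}(X)[n]$, since every $n$-torsion class $[J]$ satisfies $J^n = \gamma O_F$ for some $\gamma \in T$. Its kernel consists of $\gamma = u\alpha^n$ with $u \in O_F^*$ and $\alpha \in F^*$, and the identity $O_F^* \cap (F^*)^n = (O_F^*)^n$ then yields the claimed exact sequence with $\tau, \delta$ as described. The fact that $h$ is an isomorphism comes from the perfect pairing of Artin--Verdier duality in the form recorded in \cite{Mazur}, using the invariant $\mathrm{inv}_n$; this identifies $H^1(X,\mu_n)$ canonically with $H^2(X,\mathbb{Z}/n)^\star$ and completes the construction of (\ref{eq:precise}).

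The main technical obstacle is the bijection $T/(F^*)^n \leftrightarrow H^1(X,\mu_n)$ at primes $\mathfrak{p}$ dividing $n$, where $\schememu$ is not \'etale. At such primes one must pass to the fppf topology and invoke the comparison isomorphism $H^1_{et}(X,\mu_n) = H^1_{\mathrm{fppf}}(X,\schememu)$, verifying that $v_\mathfrak{p}(\xi) \equiv 0 \pmod n$ is exactly the condition for an fppf $\schememu$-torsor to extend over $\mathrm{Spec}(O_{F,\mathfrak{p}})$ as a finite flat group scheme torsor. This is standard but delicate, since the naive \'etale Kummer sequence is not available there.
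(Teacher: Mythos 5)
There is a genuine gap in your proposed route. You factor the isomorphism $T/(F^*)^n \cong H^2(X,\mathbb{Z}/n)^\star$ through $H^1(X,\mu_n)$, claiming both that $T/(F^*)^n \to H^1(X,\mu_n)$ is a Kummer bijection and that the cup-product map $h\colon H^1(X,\mu_n) \to H^2(X,\mathbb{Z}/n)^\star$ is an isomorphism. Neither holds. By Lemma~\ref{lem:first}, $H^1(X,\mu_n) = \mathrm{Hom}(\mathrm{Pic}(X),\tilde\mu_n)$, which has order $\#\mathrm{Pic}(X)[n]$, whereas $T/(F^*)^n$ and $H^2(X,\mathbb{Z}/n)^\star$ both have order $\#(O_F^*/(O_F^*)^n)\cdot \#\mathrm{Pic}(X)[n]$ by (\ref{eq:bound}). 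For instance, with $F=\mathbb{Q}(i)$ and $n=4$ one has $H^1(X,\mu_4)=0$ while $T/(F^*)^4 \cong \mathbb{Z}/4$. The culprit is the assertion $H^1_{\mathrm{et}}(X,\mu_n)=H^1_{\mathrm{fppf}}(X,\schememu)$: this comparison requires $n$ to be invertible on $X$, which never happens here. The group $T/(F^*)^n$ (with $T$ as in the lemma, the classes with $\gamma O_F$ an $n$-th power) is what $H^1_{\mathrm{fppf}}(X,\schememu)$ computes, while the \'etale group $H^1(X,\mu_n)$ corresponds to the smaller group $T(F)/(F^*)^n$ of (\ref{eq:tdef}), the Kummer generators of \emph{everywhere unramified} cyclic extensions; your local criterion $v_{\mathfrak p}(\xi)\equiv 0\ (\mathrm{mod}\ n)$ is the fppf/flat-extension condition, not the condition for unramifiedness of $F(\xi^{1/n})/F$ at $\mathfrak p\mid n$, where one needs more.

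The paper's route avoids $H^1(X,\mu_n)$ entirely: Artin--Verdier duality gives $H^2(X,\mathbb{Z}/n)^\star \cong \mathrm{Ext}^1_X(\mathbb{Z}/n, G_{m,X})$ (\'etale Ext), and the long exact $\mathrm{Ext}$-sequence for $0\to\mathbb{Z}\xrightarrow{n}\mathbb{Z}\to\mathbb{Z}/n\to 0$ against $G_{m,X}$ produces (\ref{eq:bound}) with $\tau$ coming from $H^0(X,G_m)=O_F^*$ and $\delta$ from $\mathrm{Pic}(X)[n]\subset H^1(X,G_m)$; the identification of $\mathrm{Ext}^1_X(\mathbb{Z}/n,G_{m,X})$ with $T/(F^*)^n$, together with (i) and (ii), is the content of Mazur's analysis in \cite[pp.~539--541]{Mazur}. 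Your description of the surjection $T\twoheadrightarrow\mathrm{Pic}(X)[n]$ and the kernel computation using $O_F^*\cap(F^*)^n=(O_F^*)^n$ is correct and does give (\ref{eq:bound}) and (i) once the identification (\ref{eq:precise}) is in place; but (\ref{eq:precise}) must be established against $\mathrm{Ext}^1_X(\mathbb{Z}/n,G_m)$ rather than $H^1(X,\mu_n)$, and part (ii) is then a genuine compatibility statement about the cup-product map $h$ (which is \emph{not} an isomorphism) proved via Hilbert symbols, not a tautology.
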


\begin{proof}  The exact sequence (\ref{eq:bound}) is shown in \cite[p. 539]{Mazur}.  This utilizes 
Artin-Verdier duality (c.f. \cite[p. 538]{Mazur}), which gives a canonical isomorphism
$H^2(X,\mathbb{Z}/n)^\star = \mathrm{Ext}^1_X(\mathbb{Z}/n,G_{m,X})$.  The more precise
description in (\ref{eq:precise}), together with properties in (i) and (ii) of this description, results from
the analysis of $\mathrm{Ext}^1_X(\mathbb{Z}/n,G_{m,X})$ and the computation of 
duality pairings by Hilbert symbols in \cite[p. 540-541]{Mazur}.
\end{proof}

\begin{corollary}
\label{cor:c1analysis}  Suppose $d_1$ is a generator of $H^1(G,\tilde{\mu}_n)$. The class $d = f_X^*(d_1) \in H^1(X,\mu_n)$ corresponds to a $\schememu$-torsor $Y(d) = \mathrm{Spec}(K)$ over $\mathrm{Spec}(F)$ such that $K = F(\xi^{1/n})$ of $F$ for an element $\xi \in F^*$ with the following properties.
\begin{enumerate}
\item[i.] The extension $K/F$ is everywhere unramified and cyclic of degree $n$.  Fixing an embedding of $K$ into the maximal unramified extension $F^{un}$ of $F$ determines a surjection $\rho: \mathrm{Gal}(F^{un}/F) = \pi_1(X,\eta) \to \mathrm{Gal}(K/F)$.  
\item[ii.] There is a unique isomorphism $\lambda:\mathrm{Gal}(K/F)
\to G = \mathbb{Z}/n$ such that $\lambda \circ \rho:\pi_1(X,\eta) \to G$ is the homomorphism
$f:\pi_1(X,\eta) \to G$ used to construct Kim's invariant.  
\item[iii.] The element $\xi \in F^*$ is uniquely determined mod $(F^*)^n$ by the requirement that  (\ref{eq:normalize}) hold when we identify $d$ with an element of
$\mathrm{Hom}(\mathrm{Pic}(X),\tilde{\mu}_n)$ as in Lemma \ref{lem:first}.
\item[iv.] The image of $d = f_X^*(d_1)$ under the homomorphism
$h:H^1(G,\tilde{\mu}_n) \to H^2(G,\mathbb{Z}/n)^\star = T/(F^*)^n$ of Lemma \ref{lem:H2picture}
is the coset $\xi (F^*)^n$.
\end{enumerate}
\end{corollary}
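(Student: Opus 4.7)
The plan is to deduce the corollary by assembling Lemma \ref{lem:first}, Remark \ref{rem:Kummer2} and Lemma \ref{lem:H2picture} applied to the class $d = f_X^*(d_1)$, after first unpacking what $d_1$ is.

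I would begin by noting that since $G$ acts trivially on $\tilde{\mu}_n$, we have $H^1(G,\tilde{\mu}_n) = \mathrm{Hom}(G,\tilde{\mu}_n)$, so a generator $d_1$ is an isomorphism $G \to \tilde{\mu}_n$. Under the canonical identification $H^1(X,\mu_n) = \mathrm{Hom}(\mathrm{Pic}(X),\tilde{\mu}_n)$ from Lemma \ref{lem:first} (composed with the Artin surjection $\pi_1(X,\eta) \twoheadrightarrow \mathrm{Pic}(X)$), the pullback $d = f_X^*(d_1)$ corresponds to the composite $d_1 \circ f : \pi_1(X,\eta) \to \tilde{\mu}_n$. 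Since $f$ is surjective and $d_1$ is an isomorphism, this composite is surjective, so $d$ has order $n$ in $H^1(X,\mu_n)$. Lemma \ref{lem:first} then produces $\xi \in F^*$, unique modulo $(F^*)^n$, with $Y(d) \cong \mathrm{Spec}(F[w]/(w^n - \xi))$, and setting $K = F(\xi^{1/n})$ identifies $Y(d)$ with $\mathrm{Spec}(K)$.

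For (i), the key observation is that $d$ is an \'etale cohomology class on all of $X$, not merely on $\mathrm{Spec}(F)$, so $Y(d)$ extends to an \'etale $\schememu$-torsor over $X$; this forces $K/F$ to be everywhere unramified, and the order computation above shows it is cyclic of degree exactly $n$. The surjection $\rho$ is then the canonical quotient $\pi_1(X,\eta) = \mathrm{Gal}(F^{un}/F) \twoheadrightarrow \mathrm{Gal}(K/F)$ determined by the chosen embedding of $K$. For (ii), since the homomorphism $\pi_1(X,\eta) \to \tilde{\mu}_n$ attached to $d$ is $d_1 \circ f$ and $d_1$ is an isomorphism, $\ker f = \ker(d_1 \circ f) = \ker \rho$, so $f$ factors uniquely through $\rho$ via an isomorphism $\lambda:\mathrm{Gal}(K/F) \to G$ satisfying $\lambda \circ \rho = f$.

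Parts (iii) and (iv) are then direct invocations of the tools already assembled: (iii) is Remark \ref{rem:Kummer2} applied to the class $d$, which asserts that (\ref{eq:normalize}) pins $\xi$ down uniquely modulo $(F^*)^n$; (iv) is Lemma \ref{lem:H2picture}(ii) applied to $d$, whose conclusion is exactly that $h(d) = \xi (F^*)^n$ in $T/(F^*)^n$. The one place where genuine care is required — and the main potential obstacle — is matching the various canonical identifications, namely verifying that the $\xi$ produced by Lemma \ref{lem:first} (a priori unique only up to $(F^*)^n$) represents the same coset as the $\xi$ normalized by (\ref{eq:normalize}), and that the Artin map appearing there is compatible with the class-field-theoretic identifications underlying Lemma \ref{lem:first}. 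Both compatibilities are built into the proofs of those lemmas, so the step reduces to bookkeeping rather than any new input.
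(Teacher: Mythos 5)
Your proof is correct and takes the same route the paper implicitly intends: Corollary~\ref{cor:c1analysis} has no separate proof in the paper, being a direct assembly of Lemma~\ref{lem:first}, Remark~\ref{rem:Kummer2}, and Lemma~\ref{lem:H2picture}(ii) applied to $d = f_X^*(d_1)$, which is exactly what you do. The kernel computation $\ker f = \ker(d_1\circ f) = \ker\rho$ for part (ii) and the observation that extension of the torsor over all of $X$ forces unramifiedness for part (i) are the right bridges, and your flag about matching the $\xi$ of Lemma~\ref{lem:first} with the $\xi$ of (\ref{eq:normalize}) is precisely the bookkeeping the cited statements are phrased to make painless.
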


\section{Hilbert pairings, Artin maps and $H^2(X,\mu_n)$}
\label{s:H2computation}
\setcounter{equation}{0}

With the notations of \S \ref{s:reformulate}, our goal is to compute the cup product
\begin{equation}
\label{eq:alright}
 f_X^*(d_1) \cup f_X^*(c_2) = h(f_X^*(d_1)) ( f_X^*(c_2)) \in \tilde{\mu}_n
\end{equation}
when $c_2$ is a generator of $H^2(G,\tilde{\mu}_n)$, $f_X^* c_2$ is the pullback
of $c_2$ to $H^2(X,\mu_n)$ and $h(f_X^*(d_1))$ is the element of the Pontryagin dual $H^2(X,\mathbb{Z}/n)^\star$
determined in Corollary \ref{cor:c1analysis}.  To do this, we first develop in this section a description of
$H^2(X,\mu_n) = H^2(X,\mathbb{Z}/n) \otimes \tilde{\mu}_n$ using ideles of $F$.

Let $j:\mathrm{Spec}(F) \to X$ be the inclusion of the generic point of $X$ into $X$. Then
$j_* \mu_{n,F} = \mu_{n,X}$ since $F$ contains a primitive $n^{th}$ root of unity.  There is a spectral
sequence
\begin{equation}
\label{eq:spectral}
H^p(X,R^q j_*(\mu_{n,X})) \to H^{p+q}(F,\mu_{n,F}).
\end{equation}
Consider the $(p,q) = (2,0)$ term.  This is associated to the restriction homomorphism
\begin{equation}
\label{eq:2zero}
H^2(X,R^0 j_*(\mu_{n,X})) = H^2(X,\mu_{n,X}) \to H^2(F,\mu_{n,F}).
\end{equation}
By the Kummer sequence 
\begin{equation}
\label{eq:KummerF}
0 \to \mu_{n,F} \to G_{m,F} \to G_{m,F} \to 0
\end{equation}
and Hilbert Theorem 90, the homomorphism $H^2(F,\mu_{n,F}) \to H^2(F,G_{m,F})$
is injective.  The composition of $H^2(X,\mu_{n,X}) \to H^2(F,\mu_{n,F})$ with this homomorphism
factors through the homomorphism $H^2(X,\mu_{n,X}) \to H^2(X,G_{m,X})$.  However,
elements of $H^2(X,G_{m,X})$ are elements of the Brauer group of $F$ with trivial local invariants everywhere since $F$ is totally complex, and such elements must be trivial.  Thus $H^2(X,G_{m,X}) = \{0\}$ and it follows that
(\ref{eq:2zero}) is the zero homorphism.  Hence in the spectral sequence (\ref{eq:spectral})
gives an exact sequence
\begin{equation}
\label{eq:nicejob}
H^1(F,\mu_{n,F}) \to H^0(X,R^1 j_* \mu_n) \xrightarrow{\;\omega\;} H^2(X,\mu_{n,X}) \to 0.
\end{equation}
The homomorphism $\omega$  can be realized in the following way (up to a possibly
multiplying by $-1$, depending on one's conventions for boundary maps in spectral sequences). 
Taking the long exact sequence associated to the functor $j_*$ applied to (\ref{eq:KummerF}) 
gives an exact sequence
 \begin{equation}
 \label{eq:longj}
 0 \to \mu_{n,X} \to j_* G_{m,F} \to j_* G_{m,F} \to R^1 j_* \mu_{n,F} \to 0
 \end{equation}
 since $R^1 j_* G_{m,F} = 0$ by Hilbert Theorem 90. Splitting (\ref{eq:longj}) into
 two short exact sequences and then taking boundary maps in the associated long exact cohomology sequences over $X$ produces the transgression map $\omega$ in (\ref{eq:nicejob}) up to possibly multiplying by $-1$.
 
We now recall from \cite[p. 36-39]{Milne} some definitions.
 
 \begin{definition}
 \label{lem:notations}  Let $x$ be a point of $X$ with residue field $k(x)$. Define $O_x = O_{X,x}$ to be
 the local ring of $x$ on $X$.  Let $\overline{x}$ be a geometric point of $X$ over $x$, so that $k(\overline{x})$ is a separable
 closure of $k(x)$.  The Henselization $O_{x,h}$ of $O_x$ (resp. the strict Henselization $O_{x,sh}$ of $O_x$) is the direct limit  of all 
 of all local rings $D$ (resp. $D'$) which are \'etale $O_x$-algebras having residue field $k(x)$ (resp. having
 residue field inside $\overline{k(x)}$).  Let $\hat{O}_x$ be the completion of $O_x$ and let
 $\hat{O}_{\overline{x}}$ be the direct limit of all finite \'etale local $\hat{O}_x$ algebras
 having residue field in $\overline{k(x)}$.  
 \end{definition}

  The following result is implicit in \cite{Mazur}, but we will recall the argument since the details of the computation enter into some later calculations. 
    
 \begin{lemma}
 \label{lem:r1comp}  Let $x$ be a point of $X$, and let $\overline{x}$ be a geometric point over $x$.
   The stalk $(R^1 j_* \mu_n) |_{\overline{x}}$ of $R^1 j_* \mu_n$ at $\overline{x}$ is the cohomology group $H^1(F_{x,sh},\mu_n)$, where $F_{x,sh} = F\otimes_{O_F} O_{x,sh}$.  The Kummer sequence
  $$1 \to \mu_n \to G_m \to G_m \to 1$$
  over $F_{x,sh}$ is exact.  The $\mathrm{Gal}(\overline{F}_{x,sh}/F_{x,sh})$ cohomology of this sequence gives
  an isomorphisms
 \begin{equation}
 \label{eq:fx}
 F_{x,sh}^* / (F_{x,sh}^*)^n = H^1(F_{x,sh},\mu_n) = (R^1 j_* \mu_n) |_{\overline{x}}.
 \end{equation}
 This group is trivial if $x$ is the generic point of $X$. Suppose now that $x$ is a closed point, with residue field $k(x)$.  We then have natural isomorphisms 
 $\mathrm{Gal}(F_{x,sh}/F_{x,h}) = \mathrm{Gal}(\overline{k(x)}/k(x)) = \hat {\mathbb{Z}}$ where 
 $F_{x,h} = F \otimes_{O_F} O_{x,sh}$. One has
 \begin{equation}
 \label{eq:local}
 H^0(\mathrm{Gal}(F_{x,sh}/F_{x,h}),  (R^1 j_* \mu_n) |_{\overline{x}} ) = \hat{F}_x^*/T_x
 \end{equation}
 where $\hat{F}_x = \mathrm{Frac}(\hat{O}_x)$ is the completion of $F$ with respect to that discrete absolute value at $x$ and 
 $T_x \supset (\hat{F}_x^*)^n$ is the subgroup of $\gamma \in \hat{F}_x^*$ such that $\hat{F}_x(\gamma^{1/n})$ is unramified over $\hat{F}_x$. Here $T_x/((\hat{F}_x)^*)^n$ is cyclic of order $n$.  Finally,
 \begin{equation}
 \label{eq:global}
 H^0(X,R^1 j_* \mu_{n,F}) = \bigoplus_{x \in X^0} H^0(\mathrm{Gal}(F_{x,sh}/F_{x,h}),  (R^1 j_* \mu_n) |_{\overline{x}} )  = \bigoplus_{x \in X^0} \hat{F}_x^*/T_x
 \end{equation}
 where $X^0$ is the set of closed points of $X$.
 \end{lemma}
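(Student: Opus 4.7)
The plan is to prove the lemma in three steps: compute the stalks of $R^1 j_*\mu_n$, analyze the $G$-fixed part at a closed geometric point, and assemble global sections.

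First, by the general formula for stalks of higher direct images in \'etale cohomology, $(R^1 j_*\mu_n)|_{\overline x}$ is the direct limit of $H^1(V\times_X\mathrm{Spec}(F),\mu_n)$ over \'etale neighborhoods $V$ of $\overline x$, which equals $H^1(F_{x,sh},\mu_n)$. Exactness of the Kummer sequence over $F_{x,sh}$ holds because $n$ is invertible in $F$, and Hilbert's Theorem~90 gives $H^1(F_{x,sh},\mathbb G_m)=0$, so the stalk becomes $F_{x,sh}^*/(F_{x,sh}^*)^n$. For $x=\eta$ this vanishes, since $F_{\eta,sh}$ is a separable closure of $F$.

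Next, for a closed $x$, one has $G=\mathrm{Gal}(F_{x,sh}/F_{x,h})=\mathrm{Gal}(\overline{k(x)}/k(x))=\hat{\mathbb Z}$ via the maximal unramified subextension. Since $\mathrm{cd}(G)=1$ and $G$ acts trivially on $\mu_n$ (as $\tilde\mu_n\subset F$), the inflation--restriction sequence yields
\begin{equation*}
0 \to H^1(G,\mu_n) \to H^1(F_{x,h},\mu_n) \to H^1(F_{x,sh},\mu_n)^G \to 0,
\end{equation*}
where $H^1(G,\mu_n)=\mathrm{Hom}(\hat{\mathbb Z},\tilde\mu_n)=\tilde\mu_n$. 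Under the Kummer identification $H^1(F_{x,h},\mu_n)=F_{x,h}^*/(F_{x,h}^*)^n$, inflation takes a character $\chi:G\to\tilde\mu_n$ to a class $\xi(F_{x,h}^*)^n$ for which $F_{x,h}(\xi^{1/n})$ is the unramified extension cut out by $\ker\chi$; its image is therefore precisely the subgroup of classes whose Kummer extension is unramified. Passing from $F_{x,h}$ to $\hat F_x$ is harmless, since Henselization and completion share the same category of finite \'etale algebras, hence the same Kummer quotient and the same unramified subgroup $T_x/(\hat F_x^*)^n$. Combining gives $(F_{x,sh}^*/(F_{x,sh}^*)^n)^G=\hat F_x^*/T_x$, and cyclicity of $T_x/(\hat F_x^*)^n$ of order $n$ follows from its identification with $H^1(G,\mu_n)=\tilde\mu_n$.

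Finally, the vanishing of the generic stalk forces $R^1 j_*\mu_n$ to be supported on closed points, so its global sections decompose as the direct sum of the Galois invariants of the stalks at closed geometric points, yielding the last displayed formula of the lemma. The main technical obstacle is the identification of the inflation image with the unramified Kummer classes: it matches two a priori distinct natural sources for a cyclic group of order $n$---characters of $G$ on the one hand, and unramified cyclic Kummer extensions of $\hat F_x$ on the other---and amounts to local class field theory for the tame quotient of $\hat F_x^*$.
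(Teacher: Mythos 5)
Your proof is correct and arrives at the same identifications, but the route through the middle of the argument is genuinely different from the paper's. For the computation of the $\mathrm{Gal}(F_{x,sh}/F_{x,h})$-invariants in \eqref{eq:local}, you use the five-term inflation--restriction exact sequence for $1\to\mathrm{Gal}(\overline{F}_{x,h}/F_{x,sh})\to\mathrm{Gal}(\overline{F}_{x,h}/F_{x,h})\to G\to 1$ applied to $\mu_n$, noting that $H^2(\hat{\mathbb Z},\mu_n)=0$ kills the right-hand term; you then identify the image of inflation with the unramified Kummer classes and pass to the completion. The paper instead splits the Kummer quotient into the two short exact sequences of $G$-modules $1\to\tilde\mu_n\to F_{x,sh}^*\to(F_{x,sh}^*)^n\to1$ and $1\to(F_{x,sh}^*)^n\to F_{x,sh}^*\to F_{x,sh}^*/(F_{x,sh}^*)^n\to1$, takes $G$-cohomology of each, and uses the same $\mathrm{cd}(\hat{\mathbb Z})=1$ input (via $H^2(G,\tilde\mu_n)=0$) together with $H^1(G,F_{x,sh}^*)=0$ to show $H^1(G,(F_{x,sh}^*)^n)=0$ and hence that the invariants are exactly $\hat F_x^*/T_x$. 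Both routes are valid and of comparable length; yours has the advantage of directly producing the cyclic subgroup $H^1(G,\mu_n)\cong\tilde\mu_n$ and thereby the cyclicity of $T_x/(\hat F_x^*)^n$ in one stroke, while the paper's computation makes the dependence on the full multiplicative group structure more explicit, which foreshadows the idelic calculations in Lemma \ref{lem:cupcalc}.

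One place that deserves a word of caution is the final assertion about global sections. You argue that vanishing of the generic stalk ``forces $R^1 j_*\mu_n$ to be supported on closed points, so its global sections decompose as the direct sum of the Galois invariants of the stalks.'' This is true but not quite automatic from ``support on closed points'' alone: one must verify that $R^1 j_*\mu_n$ is an actual \emph{direct sum} of skyscraper sheaves $(i_x)_*M_x$ rather than some more complicated object with the same support, since the direct-sum structure is what makes $H^0$ decompose. The justification is that $\mathcal F_{\bar\eta}=0$ forces every section of $\mathcal F$ over a quasi-compact \'etale $U$ to vanish on a dense open of $U$ (hence have finite support), and then the canonical map $\mathcal F\to\bigoplus_{x\in X^0}(i_x)_*\mathcal F_{\bar x}$ is well defined and an isomorphism on all stalks. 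You should make this explicit, as the paper does by instead invoking the Kummer pushforward sequence \eqref{eq:longj} together with the observation that units are $n$-th powers in the strict Henselization at primes of residue characteristic prime to $n$.
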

 
\begin{proof}  The isomorphism (\ref{eq:fx}) results from the description of stalks of higher direct images in  \cite[Thm 1.15]{Milne} together with the long exact cohomology sequence of the Kummer sequence over
$F_{x,sh}$.  If $x$ is the generic point of $X$, then $F_{x,sh}$ is an algebraic closure of $F$ and the groups in (\ref{eq:fx}) are trivial. Suppose now that $x$ is a closed point.  We then have two exact sequences
\begin{equation}
\label{eq:two1}
1 \to \tilde{\mu}_n \to F_{x,sh}^* \to (F_{x,sh}^*)^n \to 1
\end{equation}
and
\begin{equation}
\label{eq:two2}
1 \to (F_{x,sh}^*)^n \to F_{x,sh}^* \to F_{x,sh}^*/ (F_{x,sh}^*)^n \to 1.
\end{equation}
Taking the cohomology of the second exact sequence (\ref{eq:two2}) with respect to $\Gamma = \mathrm{Gal}(F_{x,sh}/F_{x,h}) = \mathrm{Gal}(\overline{k(x)}/k(x))$ and then taking completions gives an exact sequence 
\begin{eqnarray}
\nonumber
0 \to ((\hat{F}^{un}_x)^*)^n)^{\Gamma} \to \hat{F}^*_x \to (F_{x,sh}^*/ (F_{x,sh}^*)^n)^\Gamma &&\\
\label{eq:yes} \to H^1(\Gamma,(F_{x,sh}^*)^n) \to H^1(\Gamma,F_{x,sh}^*) = 0 &&
\end{eqnarray}
where $\hat{F}_x^{un}$ is the maximal unramified extension of the complete local field $\hat{F}_x$.
The $\Gamma$-cohomology of the first exact sequence (\ref{eq:two1}) gives  
\begin{equation}
\label{eq:yestwo}
0 = H^1(\Gamma,F_{x,sh}^*) \to H^1(\Gamma,(F_{x,sh}^*)^n) \to H^2(\Gamma,\tilde{\mu}_n).
\end{equation}
The cohomology of finite modules for  $\Gamma = \hat{\mathbb{Z}}$ is trivial above dimension $1$.
So (\ref{eq:yestwo}) shows $H^1(\Gamma,(F_{x,sh}^*)^n) = 0$. In (\ref{eq:yes}), the group
$((\hat{F}^{un}_x)^*)^n)^{\Gamma}$ consists of those $\gamma \in \hat{F}_x^*$ such that $\hat{F}_x(\gamma^{1/n})$
is unramified over $\hat{F}_x$, so $((\hat{F}^{un}_x)^*)^n)^{\Gamma} = T_x$.  Hence (\ref{eq:yes}) now
shows (\ref{eq:local}).  

Now $R^1 j_* \mu_n$ has trivial stalk over the generic point of $X$, and units are $n^{th}$ powers locally in the \'etale topology over all closed points $x \in X^0$ having residue fields prime to $n$. We conclude from (\ref{eq:longj}) that $R^1 j_* \mu_{n,F}$ is the sheaf resulting from the direct sum of the stalks $(R^1 j_* \mu_n) |_{\overline{x}}$ as $x$ ranges over  $X^0$, from which   (\ref{eq:global}) follows.
\end{proof}

\begin{corollary}
\label{cor:longer}
The exact sequence (\ref{eq:nicejob}) is identified with
\begin{equation}
\label{eq:evenbetter}
F^*/(F^*)^n \xrightarrow{\;r\;} \bigoplus_{x \in X^0} \hat{F}_x^*/T_x \xrightarrow{\;\omega\;} H^2(X,\mu_n) \to 0.
\end{equation}
\end{corollary}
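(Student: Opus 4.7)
The plan is to identify term-by-term, and map-by-map, the exact sequence (\ref{eq:nicejob}) with the exact sequence (\ref{eq:evenbetter}). The rightmost term $H^2(X,\mu_n)$ and the surjection $\omega$ are literally the same in both, so I only need to match the first two terms and the leftmost arrow, and then exactness of (\ref{eq:evenbetter}) transfers from that of (\ref{eq:nicejob}).

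First, I would invoke Kummer theory over $F$ to identify $H^1(F,\mu_{n,F})$ with $F^*/(F^*)^n$. The Kummer sequence (\ref{eq:KummerF}) is exact on $\mathrm{Spec}(F)$ since $F$ has characteristic zero, and Hilbert Theorem 90 gives $H^1(F,G_{m,F}) = 0$, so the long exact cohomology sequence produces the standard isomorphism $H^1(F,\mu_{n,F}) \cong F^*/(F^*)^n$. Second, the identification
$$H^0(X,R^1 j_* \mu_n) = \bigoplus_{x \in X^0} \hat{F}_x^*/T_x$$
is exactly equation (\ref{eq:global}) of Lemma \ref{lem:r1comp}, already established, with each summand identified by (\ref{eq:local}).

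The essential remaining step is to verify that under these two identifications the leftmost arrow of (\ref{eq:nicejob}), i.e., the edge map of the Leray spectral sequence (\ref{eq:spectral}), corresponds to the restriction map $r$ sending $\gamma (F^*)^n$ to the tuple $(\gamma T_x)_{x \in X^0}$. To do this I would trace the edge map stalk by stalk. At a geometric point $\overline{x}$ over a closed point $x \in X^0$, the stalk of $R^1 j_* \mu_n$ is $H^1(F_{x,sh},\mu_n) \cong F_{x,sh}^*/(F_{x,sh}^*)^n$ by the same Kummer argument over $F_{x,sh}$. Functoriality of the Kummer sequence along $F \hookrightarrow F_{x,sh}$ shows that the stalk restriction sends $\gamma(F^*)^n$ to $\gamma(F_{x,sh}^*)^n$. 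Passing to $\mathrm{Gal}(F_{x,sh}/F_{x,h})$-invariants and using the identification (\ref{eq:local}) of this invariant subgroup with $\hat{F}_x^*/T_x$, the $x$-component of the image of $\gamma$ becomes $\gamma T_x$.

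The main obstacle is this last compatibility check: one needs to verify that the Leray edge map interacts correctly with the identifications extracted in the proof of Lemma \ref{lem:r1comp}, which are themselves assembled from the completion argument and the $\Gamma$-cohomology computation of (\ref{eq:yes}). The chase is ultimately routine because every identification in sight flows from a single Kummer sequence $1 \to \mu_n \to G_m \to G_m \to 1$ applied to different rings, and all constructions are natural in the ring. One small caveat, already flagged after (\ref{eq:nicejob}), is that the edge map is only canonically defined up to a sign depending on spectral-sequence conventions; this sign is absorbed into the definition of $r$ and does not affect the identification of kernels and cokernels.
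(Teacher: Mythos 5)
Your proposal is correct and follows essentially the same route as the paper's proof: identify $H^1(F,\mu_n)$ with $F^*/(F^*)^n$ via Kummer theory over $F$, identify $H^0(X,R^1j_*\mu_n)$ with $\bigoplus_{x}\hat{F}_x^*/T_x$ via Lemma~\ref{lem:r1comp}, and check that under these identifications the Leray edge map becomes the natural restriction map $r$, with the sign ambiguity noted. Your stalk-by-stalk trace of the edge map just spells out in a bit more detail the compatibility that the paper compresses into the phrase ``the constructions in Lemma~\ref{lem:r1comp} identify $r$ with the first map in (\ref{eq:nicejob}).''
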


\begin{proof}  By the Kummer sequence over $F$ we have $H^1(F,\mu_n) = F^*/(F^*)^n$.
If $\beta \in F^*$, then $F(\beta^{1/n})$ is unramified at almost all places of $F$, so
$\beta \in T_x$ for all but finitely many $x \in X^0$.  Thus the natural homomorphisms
$F^* \to \hat{F}_x^*/T_x$ give rise to a homomorphism $r$ as in (\ref{eq:evenbetter}), and the
constructions in Lemma \ref{lem:r1comp}  identify $r$ with the first map in  (\ref{eq:nicejob}).
\end{proof}

\begin{lemma}
\label{lem:cupcalc} Suppose that in the description $H^2(X,\mathbb{Z}/n)^\star = T/(F^*)^n$
of Lemma \ref{lem:H2picture} we are given an element $\eta \in T$ describing a class
$\eta (F^*)^n \in T/(F^*)^n$.  Let $j \in J(F)$ be an idele of $F$ such that the 
component $j_x$ of $j$ at almost all $x \in X^0$ lies in $T_x$, so that $j$
defines an element $z(j)$ of $\oplus_{x \in X^0} (\hat{F}_x^*/T_x)$.  Then Corollary
\ref{cor:longer} produces an element $\omega(z(j))$ of $H^2(X,\mu_n)$.  We have
$H^2(X,\mu_n) = H^2(X,\mathbb{Z}/n) \otimes_{\mathbb{Z}} \tilde{\mu}_n$ and thus a natural
non-degenerate pairing
\begin{equation}
\label{eq:h2now}
\langle \ , \ \rangle: H^2(X,\mathbb{Z}/n)^\star \times H^2(X,\mu_n) \to \tilde{\mu}_n
\end{equation}
resulting from Pontryagin duality pairing
$$H^2(X,\mathbb{Z}/n)^\star \times H^2(X,\mathbb{Z}/n) \to \mathbb{Z}/n.$$
The value of the pairing in (\ref{eq:h2now}) on the pair $\eta (F^*)^n$ and $\omega(z(j))$ is
\begin{equation}
\label{eq:angle1}
\langle \eta (F^*)^n,\omega(z(j)) \rangle = Art(j)(\eta^{1/n})/\eta^{1/n}
\end{equation}
where $Art(j)$ is the image of $j$ under the Artin map $J(F) \to \mathrm{Gal}(F^{ab}/F)$
when $F^{ab} \supset F(\eta^{1/n})$ is the maximal abelian extension of $F$.
\end{lemma}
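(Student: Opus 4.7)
My plan is to reduce the global pairing to a product of local Hilbert symbols, and then identify that product with the action of the global Artin map. Two ingredients drive the argument: Mazur's explicit description of the Artin--Verdier duality pairing in terms of local Hilbert symbols, and the compatibility of local and global Artin maps in class field theory.

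First I would unwind the construction of $\omega$ in Corollary \ref{cor:longer}: since $\omega$ arises from the boundary maps in (\ref{eq:longj}) and $R^1 j_*\mu_n$ decomposes as a direct sum over closed points by Lemma \ref{lem:r1comp}, the class $\omega(z(j))$ is built out of local contributions indexed by $x \in X^0$. Tensoring the Artin--Verdier duality pairing with $\tilde\mu_n$ and applying the Hilbert symbol computation in \cite[p.~540--541]{Mazur}, the pairing on the left of (\ref{eq:angle1}) should reduce to
\begin{equation*}
\langle \eta(F^*)^n,\ \omega(z(j)) \rangle = \prod_{x \in X^0}(\eta,j_x)_x,
\end{equation*}
where $(\eta,j_x)_x \in \tilde\mu_n$ denotes the local Hilbert symbol at $x$. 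The product is finite: whenever $\eta \in T_x$ and $j_x$ is a local unit at a residue characteristic prime to $n$, the local symbol equals $1$, and this holds for all but finitely many $x$. Well-definedness with respect to the ambiguity in $z(j)$ modulo $T_x$ is automatic because $(\eta,t)_x=1$ for $t\in T_x$ at any such $x$, which gives a second sanity check on the factorization.

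Next, local reciprocity gives $(\eta,j_x)_x = \mathrm{Art}_x(j_x)(\eta^{1/n})/\eta^{1/n}$ in $\tilde\mu_n$, where $\mathrm{Art}_x:\hat F_x^* \to \mathrm{Gal}(\hat F_x^{ab}/\hat F_x)$ is the local Artin map. Since the global Artin map on the finite abelian extension $F(\eta^{1/n})/F$ factors as the product of the local Artin maps applied to the idelic components of $j$, multiplying over all closed points $x$ yields $\mathrm{Art}(j)(\eta^{1/n})/\eta^{1/n}$, which is the right side of (\ref{eq:angle1}). Combining with the previous step establishes the formula.

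The main obstacle will be Step~1: one must carefully align three identifications, namely $T/(F^*)^n = H^2(X,\mathbb{Z}/n)^\star$ from Lemma \ref{lem:H2picture}, the transgression $\omega$ coming from (\ref{eq:longj}) (which is only determined up to sign by the spectral sequence convention), and Mazur's local Hilbert symbol formula. Any resulting sign or normalization ambiguity must be tracked through the isomorphism $\phi:\mathbb{Z}/n \to \tilde\mu_n$ of Section \ref{s:reformulate}; the correct normalization is in fact forced by the requirement that when $j$ comes from $F^*$ the left side of (\ref{eq:angle1}) vanishes by exactness of (\ref{eq:evenbetter}), in agreement with the triviality of $\mathrm{Art}$ on principal ideles. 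Once this bookkeeping is done, Steps~2 and 3 are standard class field theory.
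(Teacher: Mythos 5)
Your plan takes essentially the same route as the paper: reduce the Artin--Verdier duality pairing to a product of local Hilbert symbols via Mazur's explicit computation, then invoke the compatibility of local Artin maps (local reciprocity) with the global Artin map to recognize the product as $\mathrm{Art}(j)(\eta^{1/n})/\eta^{1/n}$. This is exactly what the paper does, and the paper even says so at the outset of its own proof, citing the same pages of Mazur. The concrete content the paper adds, and which you correctly flag as your main obstacle, is an explicit mechanism for the reduction: it chooses a finite nonempty set $V \subset X^0$ with $\mathrm{Pic}(X-V)=0$ and residue characteristics prime to $n$, writes out the relative cohomology sequence (\ref{eq:excision}) for the pair $(X, X-V)$, applies local duality to identify $H^2_V(X,\mathbb{Z}/n)$ and $H^3_V(X,\mathbb{Z}/n)$ with products of duals of local unit and $\mu_n$ groups, uses class field theory to describe $H^1(X,\mathbb{Z}/n)$ and $H^1(X-V,\mathbb{Z}/n)$ via ray class groups, then Pontryagin dualizes to get (\ref{eq:describetwo}) and feeds the surjection (\ref{eq:allright}) into the semi-local pairings (\ref{eq:dual}) induced by the Hilbert pairings (\ref{eq:dualtwo}). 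Your well-definedness and principal-idele sanity checks are good and are consistent with that picture, but they serve as corroboration rather than as a substitute for carrying out the identification; if you were to fill in the step you label as the obstacle, the excision argument of the paper is precisely the kind of computation you would need. Net: correct approach, same as the paper, with the central reduction step sketched rather than carried through.
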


\begin{proof}  This follows from reducing the computation of duality pairings to the computation
of Hilbert symbols, as in \cite[\S 2.4-2.6]{Mazur}.  Here is one way to carry this out explicitly.

We have a 
long exact relative cohomology sequence 
\begin{eqnarray}
\nonumber
H^1(X,\mathbb{Z}/n) \to H^1(X - V, \mathbb{Z}/n) \to H^2_V(X,\mathbb{Z}/n)\to H^2(X,\mathbb{Z}/n) &&\\ 
\label{eq:excision}
 \xrightarrow{\;e\;} H^2(X-V,\mathbb{Z}/n) \to H^3_V(X,\mathbb{Z}/n) \xrightarrow{\;b\;}
H^3(X,\mathbb{Z}/n) &&\qquad
\end{eqnarray}
associated to a choice of a finite non-empty set $V$ of closed points of $X$ which is discussed in 
\cite[\S 2.5]{Mazur}.  

Suppose we take $V$ large enough so that $\mathrm{Pic}(X-V) = 0$ and all of the residue
characteristics of points of $X-V$ are relatively prime to $n$.  Then the Kummer sequence
$$1 \to \mu_{n,X-V} \to G_{m,X-V} \to  G_{m,X-V} \to 1$$
is exact.  So $$H^1(X-V,G_{m,X-V}) = \mathrm{Pic}(X-V) = 0$$ implies $H^2(X-V,\mu_{n,X-V})$ equals the $n$-torsion in the Brauer group $H^2(X-V,G_{m,X-V})$.  This $n$-torsion has order
$n^{\#V - 1}$ by the usual theory of elements of the Brauer group of $F$ which are unramified
outside of $V$.  By local duality (c.f. \cite[p. 540, 538]{Mazur}), 
$$H^3_V(X,\mathbb{Z}/n) =  \prod_{P\in V} \tilde{\mu}_n^\star.$$
Global duality gives 
$$H^3(X,\mathbb{Z}/n) = \mathrm{Ext}_X^0(\mathbb{Z}/n,G_m) = \tilde{\mu}_n^\star.$$
By considering the orders of these groups, we see that the map $b$ in (\ref{eq:excision}) has
kernel exactly $H^2(X-V,\mathbb{Z}/n)$, so the map $e$ is trivial.  

By local duality (\textit{op. cit.}) we have
$$H^2_V(X,\mathbb{Z}/n) = \prod_{P \in V} (\hat{O}_P^*/(\hat{O}_P^*)^n)^\star.$$ 
Using these isomorphisms in (\ref{eq:excision})  and taking Pontryagin duals gives an exact sequence
\begin{equation}
\label{eq:describe}
0 \to H^2(X,\mathbb{Z}/n)^\star \to \prod_{P \in V} \hat{O}_P^*/(\hat{O}_P^*)^n \to H^1(X - V, \mathbb{Z}/n)^\star \to H^1(X,\mathbb{Z}/n)^\star.
\end{equation}
By class field theory, $$H^1(X,\mathbb{Z}/n) = \mathrm{Hom}(Cl(O_F),\mathbb{Z}/n)$$ 
and $$H^1(X- V,\mathbb{Z}/n) = \mathrm{Hom}(Cl_{m_V}(O_F),\mathbb{Z}/n)$$ when
$Cl(O_F) = \mathrm{Pic}(O_F)$ is the ideal class group of $O_F$ and $Cl_{m_V}(O_F)$
is the ray class group of conductor $m_V$ for $m_V$ a sufficiently high power of the product
of the prime ideals of $O_F$ corresponding to  $P \in V$.  

Thus (\ref{eq:describe}) becomes
\begin{equation}
\label{eq:describetwo}
0 \to H^2(X,\mathbb{Z}/n)^\star \to \prod_{P \in V} \hat{O}_P^*/(\hat{O}_P^*)^n \to \frac{Cl_{m_V}(O_F)}{nCl_{m_V}(O_F)} \to \frac{Cl(O_F)}{nCl(O_F)}
\end{equation}
where the right hand homomorphism is induced by the canonical surjection $Cl_{m_V}(O_F) \to Cl(O_F)$.

Now in (\ref{eq:evenbetter}), since $H^2(X,\mu_n)$ is finite, we can take $V$ as above sufficiently large so that there is a surjection
\begin{equation}
\label{eq:allright}
\bigoplus_{P \in V \subset X^0} \hat{F}_P^*/T_P \to H^2(X,\mu_n) \to 0.
\end{equation}
The compatibility of local and global duality pairings shows that  pairing
$$H^2(X,\mathbb{Z}/n)^\star \times H^2(X,\mu_n) \to \tilde{\mu}_n$$
in (\ref{eq:h2now}) 
results from (\ref{eq:describetwo}), (\ref{eq:allright}) and the pairings
\begin{equation}
\label{eq:dual}
\frac{\hat{O}_P^*}{(\hat{O}_P^*)^n} \times \frac{\hat{F}_P^*}{T_P} \to \tilde{\mu}_n
\end{equation}
induced by the Hilbert pairings
\begin{equation}
\label{eq:dualtwo}
\frac{\hat{F}_P^*}{(\hat{F}_P^*)^n} \times \frac{\hat{F}_P^*}{(\hat{F}_P^*)^n} \to \tilde{\mu}_n.
\end{equation}
Note  here that (\ref{eq:dual}) is non-degenerate since (\ref{eq:dualtwo}) is non-degenerate and
$T_P/(\hat{F}_P^*)^n$ corresponds by class field theory to the unique cyclic unramified extension of 
degree $n$ of $\hat{F}_P$. 

This description of (\ref{eq:h2now}) leads to (\ref{eq:angle1}) by the compatibility of the Artin map with Hilbert pairings.  
\end{proof}

\section{Analysis of $f_X^* c_2$.}
\label{s:anotheranalysis}
\setcounter{equation}{0}

Our goal now is to compute the cup product in (\ref{eq:alright}) using Lemma \ref{lem:cupcalc}. We have a
reasonable description of $ h(f_X^*(d_1)) \in H^2(X,\mathbb{Z}/n)^\star$ from Corollary \ref{cor:c1analysis}
in terms of a Kummer generator $\xi \in F$ for  the $\schememu$-torsor $Y(f_X^*(d_1))$ produced by the generator $d_1 \in H^1(G,\tilde{\mu}_n)$ and  the homomorphism
$f:\pi_1(X) \to G$.  Recall that we assumed $f$ surjective, and we know $K = F(\xi^{1/n})$ is a cyclic degree $n$ Kummer extension which is everywhere unramified over $F$.  In this section we must develop an expression for $f_X^* c_2 \in H^2(X,\mu_n)$
when $c_2$ is a generator for $H^2(G,\tilde{\mu}_n)$. This will then be used in Lemma \ref{lem:cupcalc}.

Consider the exact sequences of $G = \mathrm{Gal}(K/F)$-modules
\begin{equation}
\label{eq:Kseq1}
1 \to \tilde{\mu}_n \to K^* \to (K^*)^n \to 1
\end{equation}
and
\begin{equation}
\label{eq:Kseq2}
1 \to (K^*)^n \to K^* \to K^*/(K^*)^n \to 1.
\end{equation}

\begin{lemma}
\label{lem:boundary}  The composition of the boundary maps in the  long exact $G$-cohomology sequences associated to (\ref{eq:Kseq1}) and (\ref{eq:Kseq2}) gives an exact sequence
\begin{equation}
\label{eq:onto}
F^* \to (K^*/(K^*)^n)^G \to H^2(G,\tilde{\mu}_n) \to 0.
\end{equation}
Here $H^2(G,\tilde{\mu}_n)$ is cyclic of order $n$. So there is a $\gamma \in K^*$ such that
the coset $\gamma (K^*)^n$ is in $(K^*/(K^*)^n)^G$, and the image of this coset in $H^2(G,\tilde{\mu}_n)$
equals the generator $c_2$.  
\end{lemma}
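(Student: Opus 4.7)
The plan is to derive the exact sequence (\ref{eq:onto}) by splicing together the long exact $G$-cohomology sequences of (\ref{eq:Kseq1}) and (\ref{eq:Kseq2}), using Hilbert's Theorem 90 twice and one norm-theoretic input.

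First I would take the long exact cohomology of (\ref{eq:Kseq2}). Because $H^1(G,K^*) = 0$ by Hilbert 90, the boundary $\delta_2 \colon (K^*/(K^*)^n)^G \to H^1(G,(K^*)^n)$ is surjective, and its kernel is exactly the image of $F^* = H^0(G,K^*)$ in $(K^*/(K^*)^n)^G$. Next I would take the long exact cohomology of (\ref{eq:Kseq1}). Hilbert 90 again kills $H^1(G,K^*)$, so the boundary $\delta_1 \colon H^1(G,(K^*)^n) \to H^2(G,\tilde{\mu}_n)$ is injective, with image equal to the kernel of the map $\iota \colon H^2(G,\tilde{\mu}_n) \to H^2(G,K^*)$ induced by the inclusion $\tilde{\mu}_n \hookrightarrow K^*$. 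The composite $\delta_1 \circ \delta_2$ is the arrow appearing in (\ref{eq:onto}); its kernel is automatically the image of $F^*$, so the remaining content of the exact sequence is the surjectivity of $\delta_1 \circ \delta_2$, which is equivalent to $\iota = 0$.

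Showing $\iota = 0$ is the main obstacle. Since $G$ is cyclic of order $n$ and acts trivially on $\tilde{\mu}_n$, Tate periodicity identifies $H^2(G,\tilde{\mu}_n)$ with $\tilde{\mu}_n / \tilde{\mu}_n^n = \tilde{\mu}_n$, which is cyclic of order $n$ (simultaneously verifying the second assertion of the lemma); similarly $H^2(G,K^*) = F^*/\mathrm{Norm}_{K/F}(K^*)$, with $\iota$ induced by inclusion. Hence $\iota = 0$ is equivalent to $\tilde{\mu}_n \subseteq \mathrm{Norm}_{K/F}(K^*)$. To establish this I would invoke the Hasse norm theorem for the cyclic extension $K/F$: because $K/F$ is everywhere unramified (finite places unramified, and no archimedean place of $F$ becomes ramified in $K$), the local norm map $K_v^* \to F_v^*$ is surjective on local units at every place $v$ of $F$. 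In particular every $\zeta \in \tilde{\mu}_n \subset O_F^*$ is a local norm at every place, hence a global norm. This is the same input as the Hilbert norm theorem used in Theorem \ref{thm:elementary}.

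The final assertion of the lemma is then a formal consequence: the resulting surjection $(K^*/(K^*)^n)^G \to H^2(G,\tilde{\mu}_n)$ onto a cyclic group of order $n$ admits a preimage of the generator $c_2$, so some $\gamma \in K^*$ has $\gamma (K^*)^n \in (K^*/(K^*)^n)^G$ mapping to $c_2$.
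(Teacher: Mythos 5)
Your proposal is correct and follows essentially the same route as the paper: two applications of Hilbert 90 to splice the long exact sequences of (\ref{eq:Kseq1}) and (\ref{eq:Kseq2}), reduction via Tate periodicity of the map $H^2(G,\tilde{\mu}_n)\to H^2(G,K^*)$ to the $\hat{H}^0$-level inclusion $\tilde{\mu}_n\hookrightarrow F^*/\mathrm{Norm}_{K/F}(K^*)$, and the Hasse norm theorem for the everywhere-unramified cyclic extension $K/F$ to show that inclusion is trivial. The paper phrases the periodicity step as cup product with a generator of $H^2(G,\mathbb{Z})$, but this is the same computation.
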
 

\begin{proof}
Since $G$ is cyclic, the map $H^2(G,\tilde{\mu}_n) \to H^2(G,K^*)$ is the cup product with a generator of $H^2(G,\mathbb{Z})$ of the map $\hat{H}^0(G,\tilde{\mu}_n) \to \hat{H}^0(G,K^*)$ of Tate cohomology groups.  
Since $K/F$ is everywhere unramified, every element of $\tilde{\mu}_n$ is a local norm.  Therefore every element of $\tilde{\mu}_n$ is a global norm from $K$ to $F$ because $K/F$ is cyclic.  Therefore $\hat{H}^0(G,\tilde{\mu}_n) \to \hat{H}^0(G,K^*)$ is the trivial map, so $H^2(G,\tilde{\mu}_n) \to H^2(G,K^*)$ is the trivial map. Because $H^1(G,K^*) = 0$,  the $G$ cohomology of the exact sequences (\ref{eq:Kseq1}) and (\ref{eq:Kseq2})
gives (\ref{eq:onto}).
\end{proof}

\begin{lemma}
\label{lem:nicegamma}  With the above notations, the extension $K(\gamma^{1/n})$ is a cyclic
degree $n^2$ extension of $F$ which contains $K$.  There is an idele $j = (j_v)_v$ of $J(F)$
with the following properties.  If $v$ is an infinite place of $F$, $j_v = 1$.  Suppose
$v$ is finite and that $v$ corresponds to the closed point $x$ of $X$.  Then for all places $w$ of
$K$ above $v$, the images of $j_v \in F_v^* = \hat{F}_x^*$ and $\gamma \in K^*$ in 
$$(\hat{F}_{x,sh}^*/(\hat{F}_{x,sh}^*)^n)^{G_x} =  \hat{F}_x^*/T_x$$
agree for any embedding of $K_w$ into $\hat{F}_{x,sh}$ over $F_v$.  Let $\mathrm{ord}_v:F_v^* \to \mathbb{Z}$ be the discrete valuation associated to $v$.  Then $\mathrm{ord}_v(\mathrm{Norm}_{K/F}(\gamma))$ lies in $n \mathbb{Z}$, and there is a congruence of integers 
\begin{equation}
\label{eq:wcongruence}
\frac{\mathrm{ord}_v(\mathrm{Norm}_{K/F}(\gamma))}{n} \equiv \mathrm{ord}_v(j_v) \equiv \mathrm{ord}_w(\gamma) \quad \mathrm{mod} \quad  n_v \mathbb{Z}
\end{equation}
when $n_v$ is the order the decomposition group $G_{w} \subset G = \mathrm{Gal}(K/F)$ of any place $w$ over $v$ in $K$ and $\mathrm{ord}_w:K_w^* \to \mathbb{Z}$ is the discrete valuation associated to $w$.  Finally, in the notation of Lemma \ref{lem:cupcalc}, for all such $j$ 
the element $\omega(z(j)) \in H^2(X,\mu_n)$ equals $f_X^* c_2$.
\end{lemma}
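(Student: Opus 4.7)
My plan is to establish the lemma's four assertions in turn, with the last being the main content.

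For the first claim, that $K(\gamma^{1/n})/F$ is cyclic of degree $n^2$: the $G$-invariance of $\gamma(K^*)^n$ gives $\sigma(\gamma)/\gamma = \beta_\sigma^n$ with $\beta_\sigma \in K^*$ for each $\sigma \in G$, so $K(\gamma^{1/n})/F$ is Galois. Its Galois group $H$ sits in an abelian extension $1 \to N \to H \to G \to 1$, where $N = \mathrm{Gal}(K(\gamma^{1/n})/K)$ embeds into $\tilde{\mu}_n$ via Kummer theory. Choosing the set-theoretic lift $\tilde\sigma(\gamma^{1/n}) = \beta_\sigma \gamma^{1/n}$, a direct cocycle computation identifies the $H^2(G, N)$-class of this extension, pushed into $H^2(G, \tilde{\mu}_n)$, with $c_2$. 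Since $c_2$ is a generator, this forces $N = \tilde{\mu}_n$ and the extension class to be a generator of $H^2(\mathbb{Z}/n, \mathbb{Z}/n) \cong \mathbb{Z}/n$, so $H \cong \mathbb{Z}/n^2$.

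For the second and third claims: at each closed $x \in X^0$ corresponding to a finite place $v$, any embedding $K_w \hookrightarrow \hat{F}_{x,sh}$ for $w | v$ sends $\gamma$ to a class in $\hat{F}_{x,sh}^*/(\hat{F}_{x,sh}^*)^n$ that is $G_x$-invariant, since the $G_x$-action factors through the decomposition group $G_w$ and $\gamma(K^*)^n$ is $G$-invariant. By Lemma \ref{lem:r1comp} this invariant subgroup equals $\hat{F}_x^*/T_x = F_v^*/T_v$, so I choose $j_v \in F_v^*$ projecting to this coset; take $j_v = 1$ at archimedean places and at the cofinitely many $v$ where $\gamma$ is a unit at each $w|v$ with residue characteristic prime to $n$. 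For the congruences, $K/F$ being everywhere unramified means $v$ has $g = n/n_v$ places $w|v$ of residue degree $n_v$, giving $\mathrm{ord}_v(\mathrm{Norm}_{K/F}(\gamma)) = n_v \sum_{w|v} \mathrm{ord}_w(\gamma)$; the $G$-invariance of $\gamma$ modulo $n$th powers forces the $\mathrm{ord}_w(\gamma)$ for $w|v$ to be pairwise congruent modulo $n$, which yields the first congruence modulo $n_v$ after dividing by $n$. The agreement condition of part (2) reads $j_v/\gamma = \delta^n$ in $\hat{F}_{x,sh}^*$, and since $\hat{F}_{x,sh}/\hat{F}_x$ is unramified its value group is $\mathbb{Z}$, so $\mathrm{ord}_v(j_v) \equiv \mathrm{ord}_w(\gamma) \pmod n$, which is stronger than the stated congruence modulo $n_v$.

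The fourth and main assertion, $\omega(z(j)) = f_X^* c_2$, I plan to prove by exhibiting a commutative square
\[
\begin{CD}
(K^*/(K^*)^n)^G @>\partial>> H^2(G, \tilde{\mu}_n) \\
@V{\mathrm{loc}}VV @VV{f_X^*}V \\
H^0(X, R^1 j_* \mu_n) @>\omega>> H^2(X, \mu_n)
\end{CD}
\]
where $\partial$ is the composition of the two boundary maps in Lemma \ref{lem:boundary}, $\mathrm{loc}$ sends $\gamma(K^*)^n$ to the idelic class $(j_v T_v)_v \in \bigoplus_x \hat{F}_x^*/T_x$ constructed in part (2), and $\omega$ is the \'etale transgression from Corollary \ref{cor:longer}. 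Commutativity is a naturality statement: the finite \'etale $G$-torsor $\pi : Y = \mathrm{Spec}(O_K) \to X$ satisfies $R^q \pi_* = 0$ for $q > 0$, so $\pi_*$ is exact, and the four-term sheaf sequence (\ref{eq:longj}) on $Y$ pushes forward to a four-term exact sequence of $G$-equivariant sheaves on $X$ whose $G$-invariant global sections yield the $G$-module four-term sequence underlying $\partial$. Chasing $\gamma(K^*)^n$ around the diagram, using that the Hochschild--Serre edge map in degree two is $f_X^*$, gives the desired equality. The main obstacle will be this last commutativity: matching the group-cohomological boundary $\partial$ with the \'etale transgression $\omega$ through the Hochschild--Serre comparison, and confirming that the $G$-invariant local Kummer class of $\gamma$ at each closed point genuinely coincides with the image of $z(j)$ in $H^0(X, R^1 j_* \mu_n)$, requires careful tracking of cocycles and sign conventions through the spectral sequence.
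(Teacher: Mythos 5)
Your treatment of the first claim takes a genuinely different, and in fact cleaner, route than the paper. Where the paper passes to the maximal exponent-$n$ abelian extension $F^{(n)}$ of $F$, uses Kummer theory over $F^{(n)}$, and invokes a splitting of $\mathrm{Gal}(F^{(n)}/F) \to \mathrm{Gal}(K/F)$ to force the exponent of $\mathrm{Gal}(K(\gamma^{1/n})/F)$ to be $n^2$, you directly identify the group-extension class of $1 \to \mathrm{Gal}(L/K) \to \mathrm{Gal}(L/F) \to G \to 1$, under the canonical Kummer isomorphism $\mathrm{Gal}(L/K) \cong \tilde{\mu}_n$, with the $2$-cocycle $(\sigma,\tau) \mapsto \beta_\sigma\,\sigma(\beta_\tau)\,\beta_{\sigma\tau}^{-1}$, which is exactly the image of $\gamma(K^*)^n$ under the composed boundary map of (\ref{eq:boundary}), hence with $c_2$. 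Since $c_2$ has order $n$ the extension is nonsplit and thus cyclic of order $n^2$. This is correct and arguably more transparent than the paper's argument. Your construction of $j$ and verification of (\ref{eq:wcongruence}) via the semilocal decomposition, the unramifiedness of $K/F$, and the norm computation are in substance the same as the paper's.

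The final assertion, $\omega(z(j)) = f_X^* c_2$, is where your argument remains incomplete. You propose the right commutative square and the right mechanism (finiteness of $\mathrm{Spec}(O_K) \to X$, exactness of its pushforward, Hochschild--Serre for the $G$-cover), but you explicitly defer the verification that the group-cohomological boundary $\partial$ built from (\ref{eq:Kseq1})--(\ref{eq:Kseq2}) matches, through $f_X^*$, the transgression $\omega$ built from (\ref{eq:longj}). To be fair, the paper is itself very terse here, asserting the equality from ``the fact that $\gamma(K^*)^n$ has image $f_X^* c_2$ in $H^2(X,\mu_n)$ together with the construction of Corollary \ref{cor:longer}''; the compatibility you are trying to make precise is exactly what that sentence silently uses. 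Your diagram is the correct thing to establish, but as written your proposal leaves this key commutativity unproved, and this is a genuine gap: the argument is not complete until a representative of $\gamma(K^*)^n$ is tracked at the cochain level through both boundary constructions and the resulting classes are shown to agree (up to the sign ambiguity in the transgression already noted after (\ref{eq:longj})).
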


\begin{proof} Since $\gamma (K^*)^n$ lies in the invariants $(K^*/(K^*)^n)^G$, and $G = \mathrm{Gal}(K/F)$ is cyclic, the extension
$K(\gamma^{1/n})$ is abelian over $F$.  Since $\gamma (K^*)^n$ has image of order $n$ in
$H^2(G,\tilde{\mu}_n)$, it must define an element of $K^*/(K^*)^n$ of order $n$.  So $K(\gamma^{1/n})$
is a cyclic degree $n$ extension of $K$.

By Kummer theory, 
$$K^*/(K^*)^n = \mathrm{Hom}_{cont}(\mathrm{Gal}(K^{(n)}/K),\tilde{\mu}_n)$$
and
$$F^*/(F^*)^n = \mathrm{Hom}_{cont}(\mathrm{Gal}(F^{(n)}/F),\tilde{\mu}_n)$$
when $K^{(n)}$ is the maximal abelian exponent $n$ extension of $K$ and $F^{(n)}$ is defined similarly for
$F$.  The natural homomorphism $F^*/(F^*)^n \to K^*/(K^*)^n$ corresponds to  the  map
$$\mathrm{Hom}_{cont}(\mathrm{Gal}(F^{(n)}/F),\tilde{\mu}_n) \to \mathrm{Hom}_{cont}(\mathrm{Gal}(K^{(n)}/K),\tilde{\mu}_n)$$
which results from restricting homomorphisms from $\mathrm{Gal}(F^{(n)}/F)$ to $\mathrm{Gal}(F^{(n)}/K)$ and then inflating them to $\mathrm{Gal}(K^{(n)}/K)$.  The image of $$F^*/(F^*)^n \to K^*/(K^*)^n$$ is thus contained in the set $\mathcal{H}$ of  those elements of $\mathrm{Hom}_{cont}(\mathrm{Gal}(K^{(n)}/K),\tilde{\mu}_n)$ which are inflated from elements of the group $\mathrm{Hom}_{cont}(\mathrm{Gal}(F^{(n)}/K),\tilde{\mu}_n)$. Let us show that this image is precisely $\mathcal{H}$. It is enough to show that any continuous homomorphism
$\mathrm{Gal}(F^{(n)}/K)\to \tilde{\mu}_n$ can be extended to a continuous homomorphism
$\mathrm{Gal}(F^{(n)}/F)\to \tilde{\mu}_n$. This is so  because the sequence
$$1 \to \mathrm{Gal}(F^{(n)}/K) \to \mathrm{Gal}(F^{(n)}/F) \to \mathrm{Gal}(K/F) \to 1$$
splits owing to the fact that $\mathrm{Gal}(F^{(n)}/F)$ is an exponent $n$ abelian group and 
$\mathrm{Gal}(K/F)$ is isomorphic to $\mathbb{Z}/n$.  

In view of (\ref{eq:onto}) and the above discussion of Kummer theory,  $\gamma (K^*)^n$ corresponds to a homomorphism $h:\mathrm{Gal}(K^{(n)}/K) \to \tilde{\mu}_n$
such that the smallest power of $h$ which is in $\mathcal{H}$ is the $n^{th}$
power of $h$.  This means that the compositum $F^{(n)} K(\gamma^{1/n})$ be a cyclic degree $n$
extension of $F^{(n)}$, where $K \subset F^{(n)}$.  Now $K(\gamma^{1/n})$ is an abelian extension of $F$ of some exponent
$m$ with $n| m | n^2$.  If $m \ne n^2$, then $\Gamma = \mathrm{Gal}(K(\gamma^{1/n})/F)$ would have a subgroup $H$ such that $\Gamma/H$ has exponent $n$ and $H$ has exponent $m/n < n$.  
Now $K(\gamma^{1/n})^H$ is an exponent $n$ abelian extension of $F$, so $K(\gamma^{1/n})^H \subset F^{(n)}$. But then $$[F^{(n)}K(\gamma^{1/n}):F^{(n)}] \le [K(\gamma^{1/n}):K(\gamma^{1/n})^H] = H < n$$
contradicting the fact that $F^{(n)} K(\gamma^{1/n})$ is cyclic of degree $n$
over $F^{(n)}$.  Thus $K(\gamma^{1/n})/F$ must be an exponent $n^2$ abelian extension of $F$,
so in fact it is a cyclic extension of degree $n^2$ of $F$.

Suppose now that $x$ is a closed point of $X$ corresponding to a finite place $v$ of $F$.  
As in the proof of Lemma 
\ref{lem:r1comp},  let $O_{x,sh}$ be the strict Henselization of the local ring $O_x$ of $x$ on $X$.  
Define  $F_{x,sh} = F\otimes_{O_F} O_{x,sh}$.   We know that
$Y$ is etale over $x$ so the local ring $O_y$ of each such $y$ lies inside $O_{x,sh}$.
Thus the completion $K_w$ of $K$ at each place $w$ over $v$ lies inside the completion $\hat{F}_{x,sh}$
of $F_{x,sh}$.  Fix a place $w(v)$ of $K$ over $v$ and choose any embedding of $K_{w(v)}$ into
$\hat{F}_{x,sh}$ over $F_v = \hat{F}_x$.  The fact that $G$ permutes the
places $w$ of $K$ over $v$ leads to a sequence of homomorphisms
\begin{equation}
\label{eq:semilocal}
\xymatrix @C=1.2pc{
(K^*/(K^*)^n)^G \ar[r]& (\oplus_{w|v } (K_w^*/(K_w^*)^n))^G  \ar@{=}[d]\\
& (K_{w(v)}^*/(K_{w(v)}^*)^n)^{G_{w(v)}} \ar[r] &  (\hat{F}_{x,sh}^*/(\hat{F}_{x,sh}^*)^n)^{G_x}
}
\end{equation}
when $G_x = \mathrm{Gal}(F_{x,sh}/F_{x,h})$.  However, in Lemma \ref{lem:r1comp} we showed that
the right hand side of (\ref{eq:semilocal}) is just $\hat{F}_x^*/T_x = F_v^*/T_x$. So we can choose the local component
$j_v$ associated to $v$ to come from $\gamma \in K$ in the way described in Lemma \ref{lem:nicegamma}.  
At infinite $v$ we can certainly choose $j_v$ to be trivial.  Now the fact that $\gamma (K^*)^n$
has image $f_X^* c_2$ in $H^2(X,\mu_n)$ together with the  construction of Corollary 
\ref{cor:longer} shows $\omega(z(j)) = f_X^* c_2$.

It remains to show the congruence (\ref{eq:wcongruence}) for each finite place $v$ of $F$.  A uniformizer of $F_v$ is one for $\hat{F}_{x,sh}$ and for $K_w$.  Hence we have 
\begin{equation}
\label{eq:find}
\mathrm{ord}_v(j_v) \equiv \mathrm{ord}_w(\gamma)\quad \mathrm{mod} \quad n \mathbb{Z}
\end{equation} from the above construction of $j_v$ from $\gamma$.  

We now fix a place $w(v)$ over $v$ in $K$.  Since $\gamma (K^*)^n$ lies
in $(K^*/(K^*)^n)^G $ we know that for each place $w$ over $v$ in $K$ there is an integer $t(w)$ such that  $\mathrm{ord}_w(\gamma) = \mathrm{ord}_{w(v)}(\gamma) + n t(w)$.  Since each $K_w$ is an unramified cyclic extension of $F_v$ of degree $n_v$, we have
$$\mathrm{ord}_v(\mathrm{Norm}_{K_w/F_v}(\gamma)) = 
n_v ( \mathrm{ord}_{w(v)}(\gamma) + n t(w)).$$
Hence
\begin{eqnarray}
\label{eq:computit}
\mathrm{ord}_v(\mathrm{Norm}_{K/F}(\gamma)) &=& \sum_{w | v} \mathrm{ord}_v(
\mathrm{Norm}_{K_w/F_v}(\gamma))\nonumber \\
& =& \sum_{w|v} n_v ( \mathrm{ord}_{w(v)}(\gamma) + n t(w))\nonumber \\
& = & (\sum_{w|v} n_v) \cdot \mathrm{ord}_{w(v)}(\gamma) + n \cdot n_v \sum_{w | v} t(w)\nonumber \\
&=& n \cdot \mathrm{ord}_{w(v)}(\gamma)  + n \cdot n_v \sum_{w | v} t(w).
\end{eqnarray}
Since $w(v)$ was an arbitrary place of $K$ over $v$, dividing (\ref{eq:computit})
by $n$ and using (\ref{eq:find}) 
completes the proof of the congruence (\ref{eq:wcongruence}).

\end{proof}

\section{ Proof of Theorem \ref{thm:fixed?}.}
\label{s:proofofmainthm}
\setcounter{equation}{0}

We will adopt the notations of  Theorem  \ref{thm:fixed?}. Thus $G = \mathbb{Z}/n$ and $c_1$ is a generator of $H^1(G,\mathbb{Z}/n) = \mathrm{Hom}(G,\mathbb{Z}/n)$ given by the identity map. The element 
$c_2$ generates $H^2(G,\tilde{\mu}_n)$ and $c = c_1 \cup c_2$.  In Definition \ref{def:roots} we picked a particular primitive $n^{th}$ root of unity $\zeta_n$.  Let $\phi:\mathbb{Z}/n \to \tilde{\mu}_n$ be the isomorphism sending $1$ to $\zeta_n$.
Then  $d_1 = \phi \circ c_1$ is a generator of $H^1(G,\tilde{\mu}_n)$. Write
$K = F(\xi^{1/n})$ as in Corollary \ref{cor:c1analysis} for an element $\xi \in F^*$ which is 
determined mod $(F^*)^n$ by $f_X^* d_1 \in H^1(X,\mu_n) = \mathrm{Hom}(\mathrm{Pic}(X),\tilde{\mu}_n)$.  We have an isomorphism $\mathrm{Gal}(K/F) \to G = \mathbb{Z}/n$ determined by $f^* c_1:\pi_1(X,\eta) \to \mathbb{Z}/n$. We will use this isomorphism to identify $\mathrm{Gal}(K/F)$ with $G = \mathbb{Z}/n$
in what follows.  The element $\xi \in F$ is the Kummer generator for $K$ as an everywhere unramified cyclic extension of $F$ for which 
\begin{equation}
\label{eq:normalization}
\frac{\mathrm{Art}(a)(\xi^{1/n})}{\xi^{1/n}} = \phi(\mathrm{Art}(a)) = f_X^* d_1(a)
\end{equation}
for $a \in Cl(O_F) = \mathrm{Pic}(X)$, where $\mathrm{Art}:Cl(O_F) \to \mathrm{Gal}(K/F)$ is the Artin map for $K/F$.  

The element of $H^2(G,\mathbb{Z}/n)^\star = T/(F^*)^n$
associated to $d_1$ by Corollary \ref{cor:c1analysis} is the coset $\xi (F^*)^n$.
Let $j$ be an idele of $F$ associated to $c_2$ as in Lemma \ref{lem:nicegamma}.  Then
$\omega(z(j)) = f_X^* c_2\in H^2(X,\mu_n)$.  

By Lemma \ref{lem:cupcalc}, the value of the pairing 
$$\langle \ , \ \rangle: H^2(X,\mathbb{Z}/n)^\star \times H^2(X,\mu_n) \to \tilde{\mu}_n$$
in (\ref{eq:h2now}) on the pair $\xi (F^*)^n$ and $\omega(z(j)) = f_X^* c_2$ is 
\begin{equation}
\label{eq:angle}
\langle \xi (F^*)^n,\omega(z(j)) \rangle = \mathrm{Art}(j)(\xi^{1/n})/\xi^{1/n} .
\end{equation}
Here $j$ is the idele constructed in Lemma \ref{lem:cupcalc}, and we are also using $\mathrm{Art}$ to denote the Artin map from the ideles $J(F)$ of $F$ to $\mathrm{Gal}(K/F)$.  
Since $d_1 = \phi \circ c_1$, this is $\phi( n \cdot (f_X^*c_1 \cup f_X^* c_2 ))$  when
$$S(f,c) = f_X^*c_1 \cup f_X^* c_2 \in H^3(X,\mu_ n) =  \mathbb{Z}/n \mathbb{Z}$$
is Kim's invariant for $f$ and $c$.  

Combining this with the normalization of $\xi$ in Corollary \ref{cor:c1analysis} and (\ref{eq:normalization}) gives 
$$\phi( S(f,c)) = \phi(\mathrm{Art}(j)).$$
Thus  $\mathrm{Art}(j) = S(f,c)$.  Hence the proof of the formula
 (\ref{eq:calculation}) is reduced to showing
 \begin{equation}
 \label{eq:arteq}
 \mathrm{Art}(j) = \mathrm{Art}([I])
 \end{equation}
 for a fractional ideal $I$ of $O_F$ having the properties in Theorem \ref{thm:fixed?}, where $[I]$ is the ideal class of $I$ in $Cl(O_F)$.
 
 The first property of $I$ is that it should be an $n^{th}$ root of $\mathrm{Norm}_{K/F}(\gamma) O_F$
 when $\gamma \in K$ is as in Theorem \ref{thm:fixed?}.  The fact that $I$ exists is shown
 by  (\ref{eq:wcongruence}) of Lemma \ref{lem:nicegamma}, which showed $\mathrm{ord}_v(\mathrm{Norm}_{K/F}(\gamma))$ is divisible by $n$ for all finite places $v$ of $F$. Let $j_v$ be the component of $j$ at $v$.  The congruence in (\ref{eq:wcongruence}) also shows that $\mathrm{ord}_v(j_v)$ is congruent to $\mathrm{ord}_v(I)$ modulo the order $n_v$ of the decomposition group of
 a place $w$ over $v$ in $K$. Since $K/F$ is an unramified extension, this is enough to show the
 equality (\ref{eq:arteq}), which completes the proof.
 
 \section{Proof of Theorem \ref{thm:intrinsic} and of Corollaries \ref{cor:easy} and \ref{cor:almost}.}
 \label{s:moreproofs}
 \setcounter{equation}{0}

 The first two parts of Theorem \ref{thm:intrinsic} follow from the arguments used in  Lemmas \ref{lem:boundary}
and \ref{lem:nicegamma} together with Theorem \ref{thm:fixed?}.  

To show the third
part of Theorem \ref{thm:intrinsic}, it will suffice to show the following for each place $v$
of $F$.  Let $j_v$ be the $v$ component of an idele $j$ of $F$ with the properties in Lemma 
\ref{lem:nicegamma}. Let $n_v$ be the local degree of $v$ in $K/F$, i.e. the order of the decomposition group in $G = \mathrm{Gal}(K/F)$ of a place over $v$ in $K$.
In view of the equality (\ref{eq:arteq}), Theorem \ref{thm:fixed?} and the congruence (\ref{eq:wcongruence}), 
it will suffice to show $n_v$ divides $\mathrm{ord}_v(j_v)$ if $L/K$ is unramified over $v$
or if $v$ splits in $K$.  Here $L = K(\gamma^{1/n})$ for some $\gamma$ as in Lemma \ref{lem:nicegamma}.  If $L/K$ is not ramified over the place $w$ of $K$ over $v$, then
$\mathrm{ord}_w(\gamma)$ must be divisible by $n$.  But $\mathrm{ord}_v(j_v) \equiv \mathrm{ord}_w(\gamma)$ mod $n_v\mathbb{Z}$ by (\ref{eq:wcongruence}), and $n_v | n$, so
we get $n_v| \mathrm{ord}_v(j_v)$ in this case.  If $v$ splits in $K$, then $n_v = 1$ so
$n_v | \mathrm{ord}_v(j_v)$ is trivial.  This finishes the proof of Theorem \ref{thm:intrinsic}.

Corollary \ref{cor:easy} follows directly from Theorem \ref{thm:intrinsic}, since we can
take $I'$ to be $O_F$ in this case. 

Suppose now that the hypotheses of Corollary
\ref{cor:almost} hold.  Let $v$ be the place of $F$ determined by the prime 
$\mathcal{P}$ in the statement of Corollary \ref{cor:almost}.  Then there is a unique place $w$ over $v$ in $K$,  $w$ totally
ramifies in $L$, and $v$ and $w$ have residue characteristic prime to $n$.  Thus $L = K(\gamma^{1/n})$ implies $\mathrm{ord}_w(\gamma)$ is relatively prime to $n$, and $n_v = n$ since $v$ is undecomposed in $K$.  Since 
$$\mathrm{ord}_{\mathcal{P}}(I) \equiv \mathrm{ord}_v(j_v) \equiv \mathrm{ord}_w(\gamma) \quad \mathrm{mod}\quad n_v \mathbb{Z}$$
as above, we conclude $\mathrm{ord}_{\mathcal{P}}(I)$ is relatively prime to $n_v = n$.  By part (iii)
of Theorem \ref{thm:intrinsic}, we can take $I' = \mathcal{P}^{\mathrm{ord}_{\mathcal{P}}(I)}$ since $v$ is the only place of $F$ over which $L/K$ ramifies.  Hence $\mathrm{Art}([I'])$ is a generator of $\mathrm{Gal}(K/F)$ since $\mathrm{Art}([\mathcal{P}])$ is,  so Corollary \ref{cor:almost}
follows from (\ref{eq:arteq}).  

 \section{Proof of Theorem \ref{thm:result}.}
 \label{s:yetmoreproofs}
\setcounter{equation}{0}

By assumption, $n > 1$.  It will suffice to construct infinitely many totally complex fields $F$
which have cyclic degree $n^2$ extensions $L_1/F$ and $L_2/F$ having the properties
in Corollary \ref{cor:easy} and \ref{cor:almost}, respectively.  We use a base change argument to do this.

The field $\mathbb{Q}(\zeta_{n^2})$ is totally complex.  We start with an initial choice of a field $F_1$ containing $\mathbb{Q}(\zeta_{n^2})$ together with a cyclic degree $n^2$ extension $N_1/F_1$ of $F_1$.  Let $S_1$ be the set of places of $F_1$ which ramify in $N_1$.  Let $F$ be a number field containing $F_1$ which is linearly disjoint from $N_1$ such that for each place $w$ of $F$ over a place $v$ in $S_1$, the completion $(F_1)_w$ contains the completions of $N_1$ at places over $v$.  Then
$L_1 = F N_1$ will be cyclic unramified degree $n^2$ extension of $F$ as required in Corollary \ref{cor:easy}.  For simplicity we now replace $F_1$ by $F$ to be able to assume that $N_1/F_1$
is a cyclic degree $n^2$ unramified extension.  Any base change of $N_1/F_1$ by a field extension $F$ of $F_1$ which is disjoint from $N_1$ will preserve this property. 

We now focus on finding an extension $F$ of $F_1$ which is disjoint from $N_1$ for which we can construct an extension $L_2/F$ with the properties in Corollary \ref{cor:almost}.

Let $\mathcal{M}$ be a sufficiently high power of the ideal $nO_{F_1}$ in $O_{F_1}$ such that  
if $\alpha \in O_{F_1}$ and $\alpha \equiv 1$ mod $\mathcal{M}$, then $\alpha$ is in  $((F_1)^*_v)^{n^2}$ for all places $v$ of $F_1$ dividing $n$.  Choose a prime $\mathcal{Q}$ of $O_{F_1}$ which splits in the ray class field over ${F_1}$ of conductor $\mathcal{M}$. Then by definition of the ray class group of ${F_1}$ mod $\mathcal{M}$, there is a generator $\alpha$ for $\mathcal{Q}$ such that $\alpha \equiv 1$ mod $\mathcal{M}$.  Since ${F_1}$ contains a root of unity of order $n^2$, the extension $N_2 = {F_1}(\alpha^{1/n^2})$ is an abelian Kummer extension of ${F_1}$.  It is cyclic of degree $n^2$ and totally ramified over $\mathcal{Q}$
since $\alpha$ has valuation $1$ at $\mathcal{Q}$.  Now $N_2/{F_1}$ splits over all places of ${F_1}$ which divide $n$, since by construction $\alpha$ is an $n^2$ power at these places.  Finally, at each place $v$ 
of ${F_1}$ which does not divide $n$ and which is not the place $v_{\mathcal{Q}}$ assocated to $\mathcal{Q}$, $\alpha$ has valuation $0$ at $v$, so $v$ is unramified in $N_2$. Thus $N_2/{F_1}$ is a cyclic degree $n^2$ extension unramified outside of $v_\mathcal{Q}$ and totally ramified over $v_\mathcal{Q}$.

Let $w_{\mathcal{Q}}$ be the unique place over $v_{\mathcal{Q}}$ in $N_2$.  For simplicity, we define
$E$ to be the completion of $F_1$ at $v_{\mathcal{Q}}$, and we let $Y$ be the completion of
$N_2$ at $w_{\mathcal{Q}}$.  Now $Y/E$ is a cyclic degree $n^2$ totally ramified extension of local fields.
There is a unique cyclic unramified extension $E'$ of $E$ of degree $n$. Consider the compositum
$E'Y$.  We have $\mathrm{Gal}(E'Y/E) = \mathrm{Gal}(E'Y/E') \times \mathrm{Gal}(E'Y/Y) = J_1 \times J_2$ where $J_1 = \mathrm{Gal}(E'Y/E') \cong \mathbb{Z}/n^2$ is the inertia subgroup  of
$\mathrm{Gal}(E'Y/E)$ and $J_2 = \mathrm{Gal}(E'Y/Y) \cong \mathrm{Gal}(E'/E) = \mathbb{Z}/n$
is cyclic of order $n$.  Let $j_1$ be a generator of $J_1$ and let $j_2$ be a generator of $J_2$.  
The element $(j_1 , j_2) \in J_1 \times J_2$ then generates a cyclic subgroup $\Gamma$ of order $n^2$ in $J_1 \times J_2$, and $\Gamma \cap (J_1 \times \{0\}) = \Gamma \cap \mathrm{Gal}(E'Y/E')$
has order $n$.  Thus the subfield $E'' = (E'Y)^{\Gamma}$ of $E'Y$ has the property that
$E'Y/E''$ is cyclic of order $n^2$, $\mathrm{Gal}(E'Y/E'') = \Gamma$ has inertia group
$\Gamma \cap (J_1 \times \{0\})$ of order $n$, and $E''/E$ is cyclic and totally ramified of degree $n$.
Thus $E''$ can be obtained from $E$ by adjoining the root of an Eisenstein polynomial of degree $n$
in $O_E[x]$. Note that $E'Y = E'' Y$ since $\Gamma = \mathrm{Gal}(E'Y/E'')$ and $J_2 = \mathrm{Gal}(E'Y/Y)$ intersect only in the identity element.  

We now choose $F$ to be any degree $n$ extension of $F_1$ which is totally ramified
over $v_{\mathcal{Q}}$ such that the completion $F_w$ of $F$ at the unique place $w$ over $v_{\mathcal{Q}}$
is isomorphic to $E''$ as an extension of $E = (F_1)_{v_{\mathcal{Q}}}$.  Such an $F$ 
can be constructed by finding a monic polynomial of degree $n$ in $O_{F_1}[x]$ which is Eisenstein
at $v_{\mathcal{Q}}$ and which locally at $v_{\mathcal{Q}}$ has a root in $E''$.  Because
$F/F_1$ is totally ramified over $v_{\mathcal{Q}}$, it is disjoint from the cyclic unramified degree $n^2$ extension $N_1/F_1$ we constructed at the beginning of the proof.  Hence $FN_1$ is a cyclic degree $n^2$ unramified extension $L_1$ of $F$ of the kind required in Corollary \ref{cor:easy}.  

Consider now
the compositum $L_2 = N_2F$ over $F_1$.     We know there are unique
places $w$ and $w_{\mathcal{Q}}$ over $v_{\mathcal{Q}}$ in $F$ and $N_2$, respectively, and
$F_w = E''$ while $(N_2)_{w_{\mathcal{Q}}} = Y$.  Since $E''Y = E'Y$ has degree $n^3$ over $F_v = E$,
and  $[L_2:F_1] = [L_2:F] \cdot [F:F_1]  \le n^2 \cdot n$, we see $[L_2:F] = n^2$ and there is a unique place $\tilde{w}$ over $v_{\mathcal{Q}}$ in $L_2 = N_2F$.  Thus $L_2/F$ is a cyclic degree $n^2$ extension since it is the base change by $F_1 \subset F$ of $N_2/F_1$. The only place of $F$ which can ramify in $L_2$ is the unique place $w$ over $v_{\mathcal{Q}}$, since $N_2/F_1$ is unramified outside of $v_{\mathcal{Q}}$.  Further, $\tilde{w}$ is the unique place of $L_2$ over $w$, and $(L_2)_{\tilde{w}}/F_w$
is the extension $E''Y/E''$.  We showed that this local extension is cyclic of order $n^2$ with inertia group of order $n$. Thus if we let $\mathcal{P}$ be the prime of $F$ determined by $w$, the extension 
$L_2/F$ will now have all of the properties required in Corollary \ref{cor:almost}.  Theorem \ref{thm:result} now follows from Corollaries \ref{cor:easy} and  \ref{cor:almost}. Note that we can vary the above construction in many ways, e.g. by choosing different primes $\mathcal{Q}$, so we can construct infinitely many $F$ with the properties in Theorem \ref{thm:result}. 

\section{Proof of Theorems \ref{thm:elementary} and \ref{thm:fixitup}}
\label{s:proofalmostlast}
\setcounter{equation}{0}

We will use the notations of Theorems \ref{thm:elementary} and \ref{thm:fixed?}.  Since
$\sigma(y)/y = x^n \in (K^*)^n$, the coset $y(K^*)^n$ lies in $(K^*/(K^*)^n)^G$.  Recall that
we have exact sequences
$$1 \to \tilde{\mu}_n \to K^* \to (K^*)^n \to 1\quad \mathrm{and}\quad 1 \to (K^*)^n \to K^* \to K^*/(K^*)^n \to 1.$$
By the construction of the boundary map $$\delta_0:\hat{H}^0(G,K^*/(K^*)^n)) \to H^1(G,(K^*)^n)$$
the class $\delta_0(y(K^*)^n)$ is represented by the one cocycle which sends $\sigma^i$ to
$\sigma^i(y)/y = (x^n) \sigma(x^n) \cdots \sigma^{i-1}(x^n)$ for $i \ge 0$.  Thus
$\delta_0(y(K^*)^n)$ is the cup product $[x^n] \cup t$, where $[x^n]$ is the class in $H^{-1}(G,(K^*)^n)$
represented by the element $x^n \in K^*$ of norm $1$ to $F$, and  $t$ is an appropriate generator of $H^2(G,\mathbb{Z})$.   
The image of $[x^n]$ under the boundary map $H^{-1}(G,(K^*)^n) \to \hat{H}^0(G,\tilde{\mu}_n) = \tilde{\mu}_n$
is the class represented by $\zeta_n = \mathrm{Norm}_{K/F}(x)$.
Since boundary maps respect cup products with $t$, we find that $\delta_0(y(K^*))$ maps to an element of order $n$ under the boundary map $H^1(G,(K^*)^n) \to H^2(G,\tilde{\mu}_n)$.  This proves
that $y(K^*)^n$ has image of order $n$ under the map $(K^*/(K^*)^n)^G \to H^2(G,\tilde{\mu}_n)$
which was used in (\ref{eq:boundary}) just prior to Theorem \ref{thm:fixed?}.  Hence (\ref{eq:boundary}) shows that if we take  $\gamma = y$ in Theorem \ref{thm:fixed?}, 
Theorem \ref{thm:elementary} now follows from Theorem \ref{thm:fixed?}.  Theorem \ref{thm:fixitup} is proved similarly.

\section{Proof of Theorem \ref{thm:Ralph1}}
\label{s:proofstillnotlast}
\setcounter{equation}{0}

The hypotheses of Theorem \ref{thm:Ralph1} are that  $n$ is a properly irregular prime, so that  $n$ divides $\# Cl(\mathbb{Z}[\zeta_n])$
but not $\# Cl(\mathbb{Z}[\zeta_n + \zeta_n^{-1}])$, and $K$ is a cyclic unramified extension of $F  = \mathbb{Q}(\zeta_n)$ of degree $n$. We
are to show that 
$S(f,c) = 0$ for all surjections $f:\pi_1(X,\eta) \to \mathrm{Gal}(K/F)  = G = \mathbb{Z}/n$
and all $c \in H^3(G,\tilde{\mu}_n)$.

\begin{lemma}
\label{lem:into} There is a $\mathbb{Z}_n$ extension of $F$ which contains $K$ and which is unramified outside $n$.
\end{lemma}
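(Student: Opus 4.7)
The plan is to combine global class field theory with an eigenspace decomposition under $\Delta = \mathrm{Gal}(F/\mathbb{Q}) = (\mathbb{Z}/n)^*$. Let $\mathfrak{p}$ be the unique prime of $O_F$ above $n$, let $M$ be the maximal abelian pro-$n$ extension of $F$ unramified outside $\mathfrak{p}$, and set $Y = \mathrm{Gal}(M/F)$. Global class field theory yields a short exact sequence
\begin{equation*}
0 \to U/\overline{E} \to Y \to A \to 0,
\end{equation*}
where $U$ is the pro-$n$ completion of $O_{F_{\mathfrak{p}}}^*$, $\overline{E}$ is the closure of the diagonally embedded $O_F^*$, and $A = Cl(O_F)\otimes\mathbb{Z}_n$. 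Leopoldt's conjecture, known for $F$ abelian over $\mathbb{Q}$, gives $\mathrm{rank}_{\mathbb{Z}_n}(Y) = r_2 + 1 = (n+1)/2$.

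Since $|\Delta| = n-1$ is coprime to $n$, the module $Y$ decomposes under $\Delta$, and in particular splits as $Y = Y^+ \oplus Y^-$ under complex conjugation.  The properly irregular hypothesis is precisely the assertion $A^+ = 0$, so $A = A^-$ and the surjection $Y \twoheadrightarrow A$ factors through $Y^-$. A direct local computation at $\mathfrak{p}$ identifies $(U/\overline{E})^-$ with a free $\mathbb{Z}_n$-module of rank $r_2 = (n-1)/2$:  the global units contribute, by Dirichlet, only in even $\chi$-eigenspaces plus the torsion $\mu_n$ in the $\omega$-eigenspace, and $\mu_n$ is exactly the torsion of $U_\omega$.

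The main technical step is to show that $Y^-$ itself is $\mathbb{Z}_n$-free.  By the $\Delta$-decomposition this reduces to proving $Y_\chi \cong \mathbb{Z}_n$ for each odd character $\chi$ of $\Delta$.  I plan to do this by passing to the cyclotomic $\mathbb{Z}_n$-tower $F_\infty/F$ and invoking the Iwasawa main conjecture (Mazur--Wiles) together with the Ferrero--Washington theorem ($\mu = 0$ for abelian fields), which together ensure that the minus part of the relevant Iwasawa $\Lambda$-module has no nontrivial finite $\Lambda$-submodules.  Unwinding at the bottom of the tower then forces the connecting class of $0 \to (U/\overline{E})_\chi \to Y_\chi \to A_\chi \to 0$ in $\mathrm{Ext}^1_{\mathbb{Z}_n}(A_\chi,\mathbb{Z}_n)$ to be a generator, so $Y_\chi$ is torsion-free and hence free of rank one.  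This Iwasawa-theoretic input is the principal obstacle.

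Granted $Y^- \cong \mathbb{Z}_n^{r_2}$, the lemma is immediate.  The surjection $\psi \colon Y \twoheadrightarrow \mathrm{Gal}(K/F) = \mathbb{Z}/n$ defining $K$ factors through $Y \twoheadrightarrow A \twoheadrightarrow \mathbb{Z}/n$, hence through $Y^-$.  Freeness of $Y^-$ lets us lift $\psi|_{Y^-}$ to a surjection $\phi_- \colon Y^- \twoheadrightarrow \mathbb{Z}_n$; extending by zero on $Y^+$ gives $\phi \colon Y \twoheadrightarrow \mathbb{Z}_n$ with $\ker\phi \subseteq \ker\psi = \mathrm{Gal}(M/K)$.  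The fixed field $M^{\ker\phi}$ is then a $\mathbb{Z}_n$-extension of $F$ containing $K$, unramified outside $n$ by construction.
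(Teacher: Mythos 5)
Your reduction is correct and the structural claim you are after is true, but the main technical step is left as a plan with a real gap, and the machinery invoked is far heavier than what the paper uses. Indeed, since the properly irregular hypothesis gives $A^+ = 0$, the surjection $Y \twoheadrightarrow \mathrm{Gal}(K/F)$ factors through $Y^-$, and if $Y^-$ is $\mathbb{Z}_n$-free it lifts to a surjection $Y^- \twoheadrightarrow \mathbb{Z}_n$ cutting out the desired $\mathbb{Z}_n$-extension; and $Y^-$ is in fact free, which is exactly what the paper proves. However, your route to freeness is not carried out, and as stated it does not follow formally from the inputs you cite: the absence of nontrivial finite $\Lambda$-submodules in the relevant Iwasawa module does not by itself force the $\Gamma$-coinvariants (which is what $Y^-$ is, after descending the tower) to be $\mathbb{Z}_n$-torsion-free. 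For instance $\Lambda/(T-n)$ has no finite $\Lambda$-submodule, yet its coinvariants are $\mathbb{Z}/n$. Some additional argument is needed to see that the connecting class in $\mathrm{Ext}^1_{\mathbb{Z}_n}(A_\chi,\mathbb{Z}_n)$ is a generator, and your inference ``so $Y_\chi$ is torsion-free and hence free of rank one'' also tacitly uses that each $A_\chi$ is cyclic, which you do not establish.

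The paper obtains the same conclusion by an elementary, self-contained count. Working with $\Gamma = \mathrm{Gal}(F/E)$ for $E = \mathbb{Q}(\zeta_n+\zeta_n^{-1})$, it identifies the maximal elementary abelian exponent-$n$ quotient of $Y^-$ via Kummer theory with $\mathrm{Hom}(T/(F^*)^n,\tilde{\mu}_n)$, observes that $\sigma$ acts trivially on $T/(F^*)^n$, and uses $n \nmid h(E)$ to show $T/(F^*)^n$ is a quotient of the group of $n$-units of $E$ modulo $n$-th powers, of $\mathbb{F}_n$-dimension $(n-1)/2$ by Dirichlet. Combined with the class field theory rank count $\mathrm{rank}_{\mathbb{Z}_n}Y^- = (n-1)/2$, Nakayama then forces $Y^- \cong \mathbb{Z}_n^{(n-1)/2}$. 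This gives everything you want with no appeal to the main conjecture, Ferrero--Washington, or Leopoldt-via-Baker--Brumer.
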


Before proving this lemma, we note how it implies Theorem \ref{thm:Ralph1}.  The lemma shows that there is a cyclic
degree $n^2$ extension $L$ of $F$ which is unramified outside of $n$ and contains $K$.  The unique prime $\mathcal{P}$ over $n$ in $F$ is principal, so it splits in $K$.  Hence Corollary \ref{cor:easy} shows the conclusion of Theorem \ref{thm:Ralph1}  
\medbreak
\noindent \textit{Proof of Lemma \ref{lem:into}}.
Let $E = \mathbb{Q}(\zeta_n + \zeta_n^{-1})$ be the real subfield of $F$. Write $\Gamma = \mathrm{Gal}(F/E) = \{e, \sigma\}$.
Then $\sigma$  acts by inversion on the Sylow $n$-subgroup of $Cl(O_F)$ since $n$ does not divide $\# Cl(O_E)$.  Therefore $K$ is contained in the maximal $n$-elementary extension $N$ of
$F$ which is unramified outside of $p$, Galois over $E$ and for which  $\sigma$ acts by inversion on $\mathrm{Gal}(N/F)$.
 
 The Kummer pairing gives a $\Gamma$-equivariant isomorphism
$\mathrm{Gal}(N/F) = \mathrm{Hom}(T/(F^*)^n,\tilde{\mu}_n)$ when $T = \{\xi \in F^*: \xi^{1/n} \in N\}$.  Since $\sigma$ acts by
inversion on both $\mathrm{Gal}(N/F)$ and $\tilde{\mu}_n$, we conclude that it acts trivially on $T/(F^*)^n$. Because $n$ is odd, 
this implies that the inclusion $E^* \to F^*$ induces a surjection $s:T' \to T/(F^*)^n$ when we let $T' = (E^*)^n \cdot \mathrm{Norm}_{F/E}(T)$.

Since $E$ has class number prime to $n$, and $N/F$ is unramified outside of $n$, we now see that $s(T'') = T/(F^*)^n$ when 
$T''$ is the subgroup of $n$-units in $T'$.   The subgroup of $n$-units in $E$ has no $n$-torsion and rank $d = (n-1)/2$.
We conclude that $\mathrm{Gal}(N/F)$ is an elementary abelian $n$-group of dimension at most $d$ over $\mathbb{Z}/n$.
 
 By class field theory there is a $\mathbb{Z}_n^d$ extension $\tilde{F}$ of $F$ which is unramified outside of $n$ such that $\sigma \in\Gamma$
  acts by inversion on $\mathrm{Gal}(\tilde{F}/F)$.  The maximal $n$-elementary subextension $N'$ of $F$ in $\tilde{F}$ then
has $\mathrm{Gal}(N/F) = (\mathbb{Z}/n)^d$, so in fact $N' = N$.  This implies Lemma \ref{lem:into} since $K \subset N = N'$.

\section{Proof of Theorem \ref{thm:Ralph2}}
\label{s:proofalmostlast2}
\setcounter{equation}{0}

 The hypotheses of Theorem  \ref{thm:Ralph2} are that $n > 2$ is prime and $K/F$ is a cyclic unramified Kummer extension of degree $n$ such that both $K$ and $F$ are Galois over $\mathbb{Q}$.  The action of
 $\Delta = \mathrm{Gal}(F/\mathbb{Q})$ on $G=\mathrm{Gal}(K/F)$ is then via a character 
 $\chi:\Delta \to \mathrm{Aut}(G)$.  If we fix an isomorphism $\iota:G \to  \mathbb{Z}/n$ we get an isomorphism between $\mathrm{Aut}(G)$ and $(\mathbb{Z}/n)^*$ that is independent of the choice of $\iota$.  In this way we can identify $\chi$ with a character $\chi:\Delta \to (\mathbb{Z}/n)^*$.

Theorem \ref{thm:fixed?} gives a $\Delta$-equivariant homomorphism
\begin{equation}
\label{eq:surprise}
H^2(G,\tilde{\mu}_n) = (K^*/(K^*)^n)^G/\mathrm{Image}(F^*) \to G=\mathrm{Gal}(K/F)
\end{equation}
sending the class of $\gamma (K^*)^n$ to $\mathrm{Art}([I])$ in the notation of Theorem \ref{thm:fixed?}.
The action of $\Delta$ on  $G=\mathrm{Gal}(K/F)$ is given by the character $\chi$.
To determine the action of $\Delta$ on $H^2(G,\tilde{\mu}_n)$, we use the exact sequence
$$0 \to \frac{1}{n} \mathbb{Z}/\mathbb{Z} \to \mathbb{Q}/\mathbb{Z} \xrightarrow{\cdot n} \mathbb{Q}/\mathbb{Z} \to 0$$
of modules with trivial $G$-action produced by multiplication by $n$ on
$\mathbb{Q}/\mathbb{Z}$.  The boundary map in the long exact cohomology sequence of this sequence
produces  $\Delta$-equivariant isomorphisms
\begin{equation}
\label{eq:amazin}
 H^2(G,\tilde{\mu}_n) = H^2(G,\mathbb{Z}/n) \otimes \tilde{\mu}_n = 
H^1(G,\mathbb{Q}/\mathbb{Z})\otimes \tilde{\mu}_n.
\end{equation}
Since $H^1(G,\mathbb{Q}/\mathbb{Z})$ is $\mathrm{Hom}(G,\mathbb{Q}/\mathbb{Z})$, we see from (\ref{eq:amazin}) that the action of $\Delta$ on $H^2(G,\tilde{\mu}_n)$ is via the character $\chi^{-1} \cdot \omega$ where $\omega: \Delta \to (\mathbb{Z}/n)^*$ is the Teichm\"uller character giving the action of $\Delta$ on $\tilde{\mu}_n$.  If (\ref{eq:surprise}) is not the trivial homomorphism,
it must be an isomorphism between cyclic groups of order $n$, and since it is $\Delta$-equivariant we would have to have $\chi^{-1}\cdot \omega = \chi$.  This would force $\omega = \chi^2$, which is impossible since
$\omega$ has even order $n-1 = \# (\mathbb{Z}/n)^*$.  Thus (\ref{eq:surprise}) must be trivial under the
hypotheses of Theorem  \ref{thm:Ralph2}, which completes the proof.

\end{document}